\newtheorem{remark}{Remark}
\definecolor{kyred}{rgb}{0.,0.,0.}
\DeclareMathOperator\supp{supp}
\DeclareMathOperator*\argmin{argmin}
\def\namedlabel#1#2{\begingroup
    #2%
    \def\@currentlabel{#2}%
    \phantomsection\label{#1}\endgroup
}
\begin{document}
\title{On the validity of complex Langevin method for path integral computations\thanks{Submitted to the editors July 20, 2020. The authors would like to thank Dr. Lei Zhang at National University of Singapore for useful discussions.\funding{Zhenning Cai and Yang Kuang were supported by the Academic Research Fund of the Ministry of Education of Singapore under grant No. R-146-000-291-114.}}}
\headers{On the validity of CL method}{Zhenning Cai, Xiaoyu Dong, and Yang Kuang}

\author{
    Zhenning Cai\thanks{Department of Mathematics, National University of Singapore, Singapore 119076 (\email{matcz@nus.edu.sg}).}
    \and
    Xiaoyu Dong\thanks{LSEC, Academy of Mathematics and Systems Science, Chinese Academy of Sciences, Beijing 100190, China (\email{dongxiaoyu@lsec.cc.ac.cn}).}
	\and
	Yang Kuang\thanks{Department of Mathematics, National University of Singapore, Singapore 119076 (\email{matkuan@nus.edu.sg}).}
}

\date{\today}
\maketitle

\begin{abstract}
  The complex Langevin (CL) method is a classical numerical strategy to alleviate the numerical sign problem in the computation of lattice field theories. Mathematically, it is a simple numerical tool to compute a wide class of high-dimensional and oscillatory integrals. However, it is often observed that the CL method converges but the limiting result is incorrect. The literature has several unclear or even conflicting statements, making the method look mysterious. By an in-depth analysis of a model problem, we reveal the mechanism of how the CL result turns biased as the parameter changes, and it is demonstrated that such a transition is difficult to capture. Our analysis also shows that the method works for any observables only if the probability density function generated by the CL process is localized. To generalize such observations to lattice field theories, we formulate the CL method on general groups using rigorous mathematical languages for the first time, and we demonstrate that such localized probability density function does not exist in the simulation of lattice field theories for general compact groups, which explains the unstable behavior of the CL method. Fortunately, we also find that the gauge cooling technique creates additional velocity that helps confine the samples, so that we can still see localized probability density functions in certain cases, as significantly broadens the application of the CL method. The limitations of gauge cooling are also discussed. In particular, we prove that gauge cooling has no effect for Abelian groups, and we provide an example showing that biased results still exist when gauge cooling is insufficient to confine the probability density function.
\end{abstract}

\begin{keywords}
Complex Langevin method, gauge cooling, lattice field theory
\end{keywords}

\section{Introduction}\label{sec:introduction}
Quantum field theory (QFT) is a fundamental theoretical framework in particle physics and condensed matter physics, which has achieved great success in explaining and discovering elementary particles in the history. Although QFT still lacks a rigorous mathematical foundation, there have already been numerous approaches to carrying out computations in QFT, based on either perturbative or non-perturbative approaches. Perturbative approaches can be applied when the coupling constant, which appears in the coupling term in the Lagrangian describing the interaction between particles, is relatively small, so that the asymptotic expansions with respect to the coupling constant, often denoted by Feynman diagrams, can be adopted as approximations. In quantum chromodynamics (QCD), which studies the interaction between quarks and gluons, the perturbative approaches work in the case of large momentum transfers. However, when studying QCD at small momenta or energies (less than 1GeV), due to renormalization, the coupling constant is comparable to 1 and the perturbative theory is no longer accurate \cite{greiner2007quantum}. Therefore, one has to resort to non-perturbative approaches, typically lattice QCD calculations, to obtain reliable approximations of the observables.

Lattice QCD is formulated based on the path-integral quantization of the classical gauge field theory. In general, the expectation of any observable $O$ can be computed by evaluating the discrete path integral
\begin{equation*}
\langle O \rangle = \frac{1}{Z} \int_{\Omega} O(x) e^{-S(x)} \,dx, \qquad
  Z = \int_{\Omega} e^{-S(x)} \,dx,
\end{equation*}
where $\Omega$ is a space whose number of dimensions is often larger than $10^4$, and $S(x)$ is the action. This problem is of broad interest in statistical mechanics \cite{gibbs2010elementary}, quantum mechanics \cite{cohen2015taming} and string theories \cite{anagnostopoulos2002new}. Due to the high dimensionality, one has to apply Monte Carlo methods to compute this integral. However, when the chemical potential is nonzero, the action is no longer real-valued, so that $e^{-S(x)}$ is no longer positive, which then causes severe \emph{numerical sign problem} in the computation \cite{gattringer2010quantum}. Here numerical sign problem refers to the phenomenon that the variance of a stochastic quantity is way larger than its expectation, resulting in significant difficulty in reducing the relative error in Monte Carlo simulations. The large variance is usually due to strong oscillation of the stochastic quantity, causing significant cancellation of its positive and negative contributions. Such problem typically occurs in quantum Monte Carlo simulations, including both lattice field theory \cite{loh1990sign} and real-time dynamics \cite{schiro2010real}. In lattice QCD, the imaginary part of $S(x)$ contributes to the high oscillation, so that the ``partition function'' $Z$ itself already has a small value and is difficult to compute accurately.

In general, there is no universal approach to solving the numerical sign problem. For example, the inchworm Monte Carlo method \cite{cohen2015taming, chen2017inchworm}, which takes the idea of partial resummation, has been proposed to mitigate the numerical sign problem for real-time dynamics of the impurity model or open quantum systems; the Lefschetz thimble method \cite{cristoforetti2012new, cristoforetti2013monte}, which uses Morse theory to change the integral path, has been applied in lattice QCD computations. In this work, we are interested in another approach to taming the numerical sign problem, known as the complex Langevin (CL) method, whose basic idea is to search for a positive probability function in a higher-dimensional space that is equivalent to the ``complex probability density function'' $Z^{-1} e^{-S(x)}$, so that we can apply the Monte Carlo method in this higher-dimensional space, which is free of numerical sign problems \cite{seiler2018beyond}. In the CL method, such a space is established by complexifying all the variables, meaning to replace all real variables by complex variables, and extending all functions defined on $\mathbb{R}$ to functions on $\mathbb{C}$ by analytic continuation. Thus the number of dimensions is doubled. The equivalent probability density function in this complexified space is sampled by complexifying the Langevin method. Such a method is also known as stochastic quantization \cite{namiki1992stochastic}. We refer the readers to \cite{seiler2018status} for a recent review of this method.

Since the CL method was proposed in \cite{klauder1983langevin, parisi1983complex}, it has never been fully understood theoretically. In numerical experiments, it is frequently seen that this method generates biased results, and its validity has therefore been questioned by a number of researchers \cite{gausterer1998complex, aarts2010complex, salcedo2016does}. Its application had been rarely seen until one breakthrough of the CL method, called the gauge cooling technique, was introduced in \cite{seiler2013gauge}. Such strategy utilizes the redundant degrees of freedom in the gauge field theory to stabilize the method. With this improvement, the CL method has been successfully applied to finite density QCD \cite{aarts2014simulating, sexty2014simulating, Kogut2019applying} and the computations in the superstring theory \cite{anagnostopoulos2018complex, nishimura2019complex}. Recently, it has also been used in the computation of spin-orbit coupling \cite{attanasio2020thermodynamics}. Despite the success, failure of the CL method still occurs, and researchers are still working hard on understanding the algorithm by formal analysis and some particular examples \cite{aarts2013localised, nagata2016argument, salcedo2016does}, hoping to make further improvements. For instance, the recent work \cite{scherzer2019complex} analyzes a specific one-dimensional case, where the authors quantified the bias by relating it to the boundary terms when performing integration by parts. The results therein have been further applied in \cite{scherzer2020controlling} to correct the CL results. {\color{kyred}In \cite{aarts2017complex}, the authors analyze the role of poles in the Langevin drift and find that although the converged CL results satisfy the Schwinger-Dyson equation, the integrals may still be incorrectly predicted. This phenomenon is further studied in \cite{salcedo2018schwinger} in a rigorous way for the one-dimensional case.} The issue revealed in \cite{scherzer2019complex} shows that the correctness of the CL method depends highly on the decay rate of the probability density function and the growth rate of the observable at infinity, which is difficult to predict before the computation, especially in the high-dimensional case. The only case in which the convergence can be guaranteed for any observable is when the probability density function is localized, which has been studied in \cite{aarts2013localised} for a one-dimensional model problem.

In this paper, we will carry out a deeper study of this specific case, and clarify some unclear statements and some misunderstanding in previous studies. Later, we will show that in lattice QCD simulations, the localized probability density function can occur only after gauge cooling is introduced, although it is not always effective. This reveals why this technique is essential to the CL method.

To begin with, we will provide a brief review of the CL method, and introduce the model problem studied in \cite{aarts2013localised}.

\subsection{Review of the complex Langevin method} \label{sec:review}
To sketch the basic idea of the complex Langevin method in the lattice field theory, we can simply consider the one-dimensional integration problem which aims to find
\begin{equation}\label{eq:integral}
  \left\langle O \right\rangle = \frac{1}{Z} \int_{\mathbb{R}}
  O(x)e^{-S(x)} \,dx, \quad Z = \int_{\mathbb{R}} e^{-S(x)} \,dx.
\end{equation}
When $S(x)$ is real, we can regard $Z$ as the partition function, so that the integral can be evaluated by the Langevin method. Specifically, we can approximate \eqref{eq:integral} by
\begin{displaymath}
\langle O \rangle \approx \frac{1}{N} \sum_{k=1}^N O(X_k),
\end{displaymath}
where $X_k$ are samples generated by simulating the following Langevin equation:
\begin{equation} \label{eq:real_Langevin}
dx = K(x) \,dt + dw, \quad K(x) = -S'(x),
\end{equation}
and we choose $X_k = x(T + k \Delta T)$ for a sufficiently large $T$ and sufficiently long time difference $\Delta T$. In \eqref{eq:real_Langevin}, $w(t)$ is the standard Brownian motion satisfying $dw^2 = 2dt$. The method converges when the stochastic process is ergodic. We refer the readers to \cite{mattingly2002ergodicity} for more details about the theory of ergodicity.

When $S(x)$ is complex, the theory of the above method breaks down, since $Z$ is no longer a partition function. Interestingly, the above algorithm can still be carried out, at least formally, if the functions $O(x)$ and $S(x)$ can be extended to the complex plane. Now we assume that both $O(x)$ and $S(x)$ are analytic and can be extended to $\mathbb{C}$ holomorphically. By naming the new functions as $O(z)$ and $S(z)$, we can still carry out the above process by the replacement $x \rightarrow z$ and $X_k \rightarrow Z_k$. More specifically, if we denote $z$ by $x+iy$, $x,y \in \mathbb{R}$, then the integral \eqref{eq:integral} is approximated by
\begin{equation} \label{eq:approx_O}
\langle O \rangle \approx \frac{1}{N} \sum_{k=1}^N O(X_k + iY_k),
\end{equation}
where the samples $X_k$ and $Y_k$ are generated by simulating the following \emph{complex Langevin equation}:
\begin{equation}\label{eq:sto}
  \left\{
    \begin{array}{@{}lll}
      dx = K(x,y) \,dt + dw,& K(x,y) = \mbox{Re}(-S'(x+iy)),  \\[6pt]
      dy = J(x,y) \,dt,& J(x,y) = \mbox{Im}(-S'(x+iy)),
    \end{array}
  \right.
\end{equation}
and choosing $X_k = x(T + k \Delta T)$ and $Y_k = y(T + k \Delta T)$. For the initial condition, we require that $y(0)=0$ and $x(0)$ can be an arbitrary real number. This method is known as the \emph{complex Langevin method}.

The validity of the CL method is usually studied using the dual Fokker-Planck (FP) equation, which describes the evolution of the probability density function of $x(t)$ and $y(t)$ for the SDE \eqref{eq:sto}. Using $P(x,y;t)$ to denote the joint probability of $x(t)$ and $y(t)$, we can derive from \eqref{eq:sto} that
\begin{equation}\label{eq:fp}
  \partial_t P = L^TP, \qquad P(x,y;0) = p(x) \delta(y),
\end{equation}
where $p(x)$ is a probability density function on the real axis, and $L^T$ represents the Fokker-Planck operator:
\begin{equation}\label{eq:fpoperator}
  L^T P = \partial_{xx} P - \partial_x (K_x P) - \partial_y (K_y P).
\end{equation}
Thus the right-hand side of \eqref{eq:approx_O} converges to the quantity
\begin{equation} \label{eq:limit}
\lim_{t\rightarrow +\infty} \int_{\mathbb{R}} \int_{\mathbb{R}} O(x+iy) P(x,y;t) \,dx \,dy.
\end{equation}
The justification of the CL method requires us to check whether the above quantity equals $\langle O \rangle$.

Such equivalence has been shown in \cite{aarts2010complex, scherzer2019complex} under certain conditions. Here we would like to restate the result as a rigorous theorem, which requires the following assumptions on the observable function $O(x)$ and the ``complex probability function'':
\begin{description}
\item[\namedlabel{itm:H1}{(H1)}] Let $\mathcal{O}(x,y;t)$ be the solution of the backward Kolmogorov equation
  \begin{equation*}
    \partial_t \mathcal{O} = L \mathcal{O},\qquad \mathcal{O}(x,y;0) = O(x+iy),
  \end{equation*}
  where $L = \partial_{xx} + K_x \partial_x + K_y \partial_y$.
  It holds that
  \begin{equation} \label{eq:OP}
    \int_{\mathbb{R}} \int_{\mathbb{R}} \mathcal{O}(x,y;\tau) P(x,y;t-\tau) \,dx \,dy =
    \int_{\mathbb{R}} \int_{\mathbb{R}} O(x+iy) P(x,y;t) \,dx \,dy
  \end{equation}
  for any $0\le\tau\le t$.
\item[\namedlabel{itm:H2}{(H2)}] The ``forward Kolmogorov equation'' for the complex-valued function $\rho(x;t)$
  \begin{equation} \label{eq:rho}
    \partial_t \rho = \partial_x (S'(x) \rho) + \partial_{xx} \rho,\qquad \rho(x;0) = p(x)
  \end{equation}
  has the {unique} steady state solution
  \begin{displaymath}
    \lim_{t\rightarrow +\infty} \rho(x;t) = \rho_{\infty}(x) = \frac{1}{Z} e^{-S(x)}.
  \end{displaymath}
\item[\namedlabel{itm:H3}{(H3)}] For any $0 \le \tau \le t$, it holds that
  \begin{equation*}
  \lim_{X \rightarrow \infty} [\mathcal{O}(X,0;\tau) \partial_x \rho(X;t-\tau) - \rho(X;t-\tau) \partial_x \mathcal{O}(X,0;\tau)] =
  \lim_{X\rightarrow \infty} S'(X) \mathcal{O}(X,0;\tau) \rho(X;t-\tau) = 0.
  \end{equation*}
\end{description}
Based on the above assumptions, the following theorem implies the validity of the CL method:
\begin{theorem} \label{thm:CL}
Assume that the conditions \ref{itm:H1} and \ref{itm:H3} hold. Then for any $t > 0$,
\begin{equation} \label{eq:obs_eq}
\int_{\mathbb{R}} \rho(x,t) O(x) \,dx =
  \int_{\mathbb{R}} \int_{\mathbb{R}} P(x,y;t) O(x+iy) \,dx \,dy.
\end{equation}
\end{theorem}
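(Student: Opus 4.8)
The plan is to build a one-parameter bridge between the two sides of \eqref{eq:obs_eq}. Fix $t>0$ and consider
\[
G(\tau) := \int_{\mathbb{R}} \mathcal{O}(x,0;\tau)\,\rho(x;t-\tau)\,dx, \qquad 0 \le \tau \le t .
\]
I would show that $G$ is constant on $[0,t]$, that $G(0)$ is exactly the left-hand side of \eqref{eq:obs_eq}, and that $G(t)$ equals the right-hand side after invoking \ref{itm:H1}. The endpoint identifications are immediate from the initial data: since $\mathcal{O}(x,0;0)=O(x)$ we get $G(0)=\int_{\mathbb{R}}O(x)\rho(x;t)\,dx$, and since $\rho(x;0)=p(x)$ we get $G(t)=\int_{\mathbb{R}}\mathcal{O}(x,0;t)\,p(x)\,dx$. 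Taking $\tau=t$ in \ref{itm:H1} and using $P(x,y;0)=p(x)\delta(y)$ collapses the left side of \eqref{eq:OP} to $\int_{\mathbb{R}}\mathcal{O}(x,0;t)\,p(x)\,dx = G(t)$, so $G(t)$ equals the right-hand side of \eqref{eq:obs_eq}; chaining $G(0)=G(t)$ then proves the theorem.

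The structural fact that makes $G$ computable is that $\mathcal{O}(x,y;\tau)$ remains a holomorphic function of $z=x+iy$ along the backward Kolmogorov flow. Indeed, for a holomorphic $f(z)$ one has $\partial_x f=f'$, $\partial_y f=if'$, $\partial_{xx}f=f''$, and since $K_x+iK_y=\operatorname{Re}(-S'(z))+i\operatorname{Im}(-S'(z))=-S'(z)$, the operator acts as $Lf=f''-S'(z)f'$, which is again holomorphic; hence the solution with holomorphic datum $O(z)$ stays holomorphic, and in particular obeys the Cauchy--Riemann relation $\partial_y\mathcal{O}=i\partial_x\mathcal{O}$. Restricting to the real axis and writing $u(x;\tau):=\mathcal{O}(x,0;\tau)$, the equation $\partial_\tau\mathcal{O}=L\mathcal{O}$ becomes $\partial_\tau u=\partial_{xx}u-S'(x)\partial_x u$, which is precisely the formal adjoint of the $\rho$-equation \eqref{eq:rho}.

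Next I would differentiate $G$ in $\tau$, substitute $\partial_\tau u=\partial_{xx}u-S'\partial_x u$ and $\partial_s\rho=\partial_{xx}\rho+\partial_x(S'\rho)$, and integrate by parts in $x$. Because the two operators are adjoint, the bulk terms cancel in pairs and only the boundary contribution survives,
\[
\frac{d}{d\tau}G(\tau)=\Big[\rho\,\partial_x u-u\,\partial_x\rho-S'u\rho\Big]_{x=-\infty}^{x=+\infty},
\]
with $u=\mathcal{O}(\cdot,0;\tau)$ and $\rho=\rho(\cdot;t-\tau)$. The bracketed expression is exactly $-(\mathcal{O}\partial_x\rho-\rho\partial_x\mathcal{O})-S'\mathcal{O}\rho$, both pieces of which vanish at infinity by \ref{itm:H3}. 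Hence $\frac{d}{d\tau}G\equiv 0$, $G$ is constant, and the chain of equalities above closes.

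The main obstacle is not the algebra of the integration by parts but its legitimacy. One needs to know that $\rho(\cdot;s)$, $\mathcal{O}(\cdot,0;\tau)$ and their $x$-derivatives decay fast enough that all improper integrals converge and the boundary terms are genuinely the only survivors, which is what \ref{itm:H3} is designed to guarantee; and one needs the preservation of holomorphy under the complexified backward equation to be justified (e.g. via well-posedness/uniqueness of that equation), so that the reduction to the real-axis PDE and its adjoint pairing with \eqref{eq:rho} is valid. A minor point to handle carefully is that \ref{itm:H3} as written controls the limit $X\to+\infty$ only; the contribution from $X\to-\infty$ must be dispatched by the same estimate or by the symmetry of the model problem.
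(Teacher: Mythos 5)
Your proposal is correct and follows essentially the same route as the paper's proof in Appendix A: the same interpolating functional $F(t,\tau)=\int_{\mathbb{R}}\rho(x;t-\tau)\,\mathcal{O}(x,0;\tau)\,dx$, the same use of the Cauchy--Riemann relation $\partial_y\mathcal{O}=i\partial_x\mathcal{O}$ (which the paper also justifies via uniqueness for the backward equation), the same adjoint-pairing and integration by parts with \ref{itm:H3} killing the boundary terms, and the same endpoint identification via \ref{itm:H1} at $\tau=t$ with $P(x,y;0)=p(x)\delta(y)$. Your closing remarks on the legitimacy of the integration by parts and on the $X\to-\infty$ limit are reasonable caveats but do not change the argument.
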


In the above theorem, we can take the limit $t \rightarrow +\infty$ on both sides of \eqref{eq:obs_eq}. By the assumption \ref{itm:H2}, one sees that \eqref{eq:limit} equals $\langle O \rangle$, which justifies the complex Langevin method. For comprehensiveness, we provide the proof of theorem in \cref{sec:proof}. The CL method computes the right-hand side of \eqref{eq:obs_eq}, which no longer includes rapidly sign-changing functions as long as the observable $O(x+iy)$ is not oscillatory. Thereby the numerical sign problem is mitigated.

{Unfortunately, this theorem is far from satisfactory since the hypotheses are difficult to justify. In fact, these hypotheses are often too ideal such that they are often violated, resulting in some mysterious behaviors of the CL method.} In applications, we often find the CL method fails to work due to divergence. Even worse, sometimes the algorithm appears to be convergent, but as the number of samples $N$ increases, the right-hand side of \eqref{eq:approx_O} does not converge to its left-hand side, leading to biased numerical result. This means that the conditions of \cref{thm:CL}, which look reasonable for the Langevin method, is often too strong in the application of the CL method. An example will be given in the next subsection.

\begin{remark}
In some literature, it is only required that {$S$ is meromorphic on $\mathbb{C}$, \textit{i.e.}, these functions may have countable poles. This usually occurs due to the multiple-valued logarithmic function, which is applied to the fermionic determinant in the action $S$. In this case, Theorem \ref{thm:CL} still holds if the poles do not appear on $\mathbb{R}$. While such a situation also has important applications, in this paper, we temporarily restrict ourselves to the simpler holomorphic case, and the difference will be revealed in Remark \ref{rem:holomorphic} at the end of Section \ref{sec:effect}.} Note that \cref{thm:CL} can be generalized to the multi-dimensional case without difficulty.
\end{remark}

\subsection{Failure of the CL method}\label{sec:intro-failcl}
The failure of the CL method has been demonstrated in a number of previous works \cite{salcedo2016does, nagata2016argument, scherzer2019complex}. Here we adopt the example used in \cite{aarts2013localised, nagata2016argument} to demonstrate such a phenomenon. We choose the observable as $O(x) = x^2$ and the complex action as
\begin{equation}\label{eq:toy1action}
  S(x) = \frac{1}{2}(1+iB) x^2 + \frac{1}{4} x^4,
  \quad B \in \mathbb{R}.
\end{equation}
It has been found in \cite{nagata2016argument} that the CL method converges to the correct expectation values when $B$ is small, while for large $B$, the CL method may still converge, but the limiting value differs from the integral \eqref{eq:integral}. We have redone the numerical experiment by simulating the stochastic equation \eqref{eq:sto} for $B = 1$ to $5$, and the results are given in \cref{fig:toy1cl_cl}, showing the same behavior as in \cite{nagata2016argument}. Furthermore, we find that for $B > 4$, the simulation becomes unstable in the sense that arithmetic overflow often appears, and for stable simulations, the CL result deviates from the exact integral significantly. To better confirm the phenomenon, we solve the FP equation \eqref{eq:fp} numerically using the method to be introduced in \cref{sec:toy1-nm} (see \cref{fig:toy1cl_fp}). The disagreement between the two figures for large $B$ also implies the lack of reliability for the CL simulation.

\begin{figure}[htbp]
\centering
\subfigure[Results from solving the CL equation]
{\includegraphics[width=0.35\textwidth]{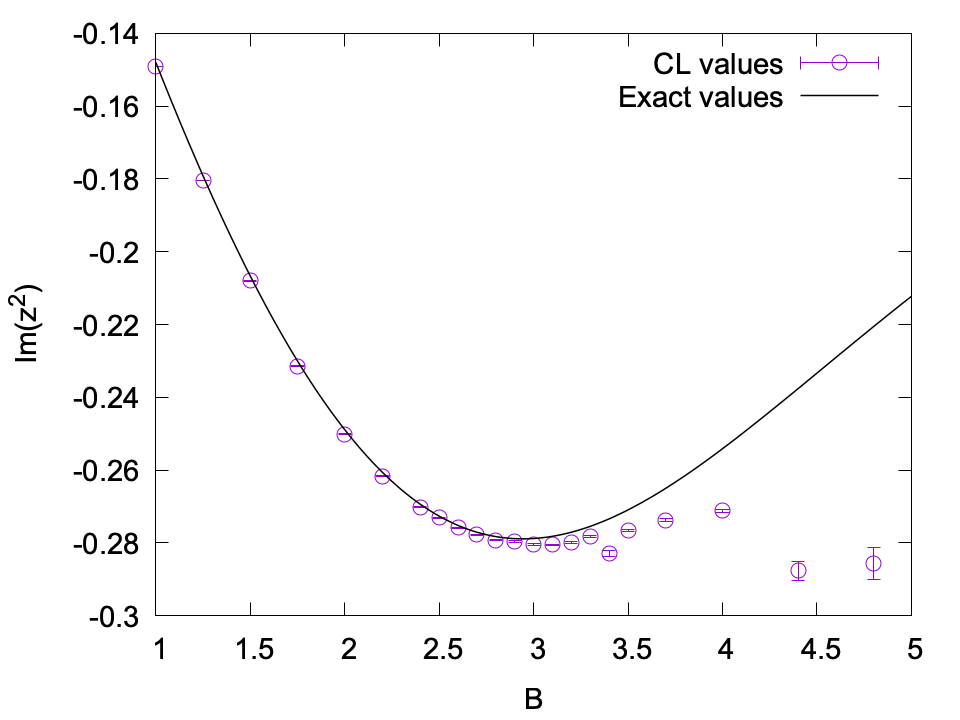}\label{fig:toy1cl_cl}}\qquad
\subfigure[Results from solving the FP equation]
{\includegraphics[width=0.35\textwidth]{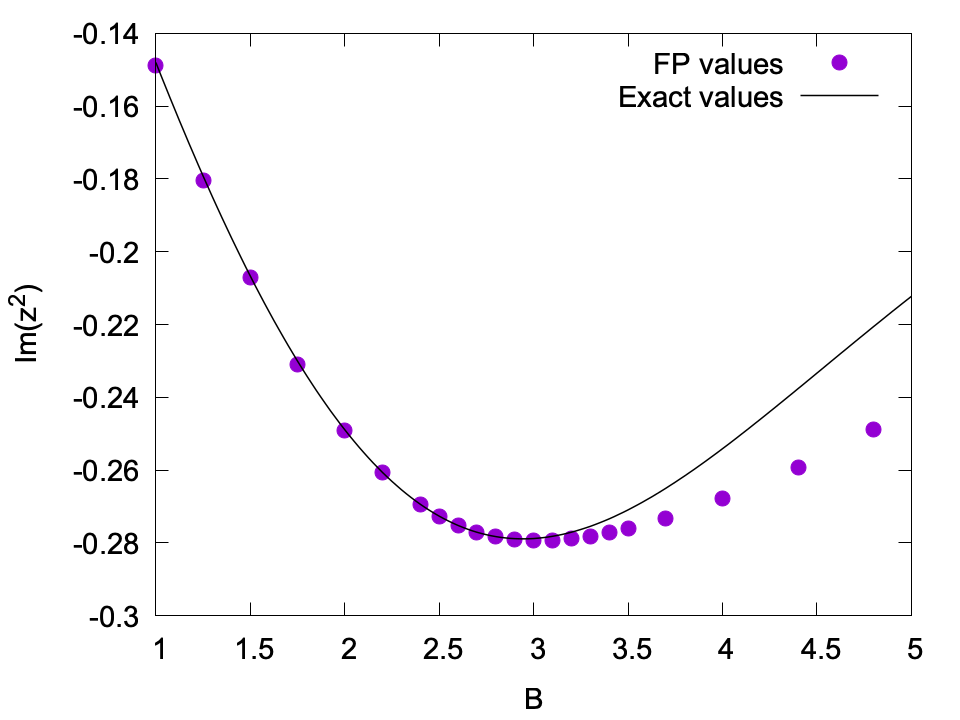}\label{fig:toy1cl_fp}}
\caption{The imaginary part of the expectation value $O(z)=z^2$.\label{fig:toy1cl}}
\end{figure}

For this specific example, it is known by the analysis in \cite{aarts2013localised} that for $B \le \sqrt{3}$, there exists $y_- > 0$ such that $P(x,y;t) = 0$ for any $x$ and $t$ if $|y| > y_-$. Furthermore, the authors of \cite{aarts2013localised} proposed the decay rate $\exp(-a x^4)$ for the marginal probability density function of $x$. The fast decay of the probability density function on the complex plane implies that the conditions \ref{itm:H1}--\ref{itm:H3} hold true, so that \cref{thm:CL} guarantees the validity of the CL method.
{\color{kyred} In this situation, following \cite{aarts2013localised}, we consider the probability density function as ``localized''. In the literature, the localization of the probability may refer to a sufficient decay of the probability density function such that the samples that drift far away from the real axis can be ignored. In this sense, as will be discussed in the next section, the admissible decay rate depends on the increasing rate of the observable, which will introduce significant difficulty to our analysis. Therefore in this paper, we only consider the localization of the probability in a strong sense, \textit{i.e.}, the probability density function is said to be localized if and only if there exists $Y \in \mathbb{R}^+$ such that $P(x,y) = 0$ for any $x \in \mathbb{R}$ and $|y| > Y$. Here we focus mainly on the localization in the $y$-direction since in the applications of the CL method (mainly QCD), the $x$-direction is usually a compact group, as will be detailed in Section \ref{sec:theory}.}

However, the behavior of the solution for $B > \sqrt{3}$ remains unclear. In \cite{aarts2013localised}, the authors predicted a power decay of the probability function when $B > \sqrt{3}$, but they left only a vague comment saying that this is ``an important signal of failure''. However, in \cite{nagata2016argument}, the authors argued by numerical experiment that the probability density function falls off exponentially for $B\leq 2.6$, implying again that \cref{thm:CL} holds and the CL method is unbiased, and when $B > 2.8$, power decay is observed, indicating the failure of the complex Langevin method.

These conflicting results for this model problem also imply the lack of understanding of the CL method. In this work, we will begin our discussion from this toy example, and try to answer the following questions:
\begin{itemize}
  \item How does the power decay of the probability affect the numerical value of the expectation? How is it related to the conditions of \cref{thm:CL}?
  \item What is the critical value of $B$ from which the CL result deviates from the exact integral?
  \item What occurs to the distribution function when $B$ passes this critical value? Is it a smooth transition?
\end{itemize}
After getting more mathematical insights of this model problem, we will further study the phenomenon of localized probability density functions in the lattice field theory. The rest of the paper is organized as follows: \cref{sec:toy_model} is devoted to a comprehensive study of the model problem \eqref{eq:toy1action}. The formulation of the CL method in lattice field theories will be provided in \cref{sec:theory}, where we will also demonstrate the non-existence of localized probability density functions. In \cref{sec:gc}, we study the effect of the gauge cooling technique in localizing the probability density functions. Finally, some concluding remarks are given in \cref{sec:conclusion}.
\section{A study of the model problem} \label{sec:toy_model}

This section is devoted to a closer look at the integration problem with action \eqref{eq:toy1action}, for which the drift velocities are
\begin{equation}
  K(x,y) = -\left(x-B y+x^{3}-3 x y^{2}\right), \qquad
  J(x,y) = -\left(y+B x+3 x^{2}
    y-y^{3}\right). \label{eq:toy1Ky} 
\end{equation}
Following \cite{nagata2016argument}, we study the complexified observable $O(z) = z^2 = x^2-y^2 +i2xy$. To understand the phenomenon showing in \cref{fig:toy1cl}, we will first resolve the ambiguity in the literature about the decay rate of the probability density function, this will be done by solving the FP equation \eqref{eq:fp} numerically.

\subsection{Numerical method for the Fokker-Planck equation}\label{sec:toy1-nm}
The FP equation for the complex Langevin equation has been numerically solved in \cite{aarts2013localised, scherzer2019complex}. It is found in \cite{scherzer2019complex} that according to the CFL condition, very small time steps need to be adopted by explicit schemes due to the large values of velocities when $|x|$ and $|y|$ are large. Therefore in our study, we take the numerical method of characteristics to achieve higher efficiency.


For any given $x(0)$ and $y(0)$, the characteristic curve for Eq. (\ref{eq:fp}) starting from this point is given by the
following equations
\begin{equation}\label{eq:ch}
      x'(t) = K(x,y), \quad 
      y'(t) = J(x,y).
\end{equation}
It can be derived from \eqref{eq:fp} and \eqref{eq:ch} that on these characteristic curves, the probability density function $P$ satisfies the following differential equation:
\begin{equation}\label{eq:fp3}
  \frac{d}{dt}\left(e^{\int_0^t f(x(s), y(s)) ds} P(x(t),y(t);t)\right) =
  e^{\int_0^t f(x(s), y(s)) ds}\partial_{xx}P(x(t), y(t); t),
\end{equation}
where $f(x, y) = \partial_x K(x,y) + \partial_y J(x,y)$. Based on such a form, we apply the backward Euler method to obtain the following semi-discretization of \eqref{eq:fp3}:
\begin{equation}\label{eq:fp-be}
    P(x(t_{k+1}),y(t_{k+1});t_{k+1}) - \Delta t \, \partial_{xx}P(x(t_{k+1}),y(t_{k+1});t_{k+1}) =
     e^{-\int_{t_k}^{t_{k+1}} f(x(s), y(s)) ds}P(x(t_k),y(t_k);t_k),
\end{equation}
where $\Delta t = t_{k+1} - t_k$ denotes the time step, and $(x(\cdot), y(\cdot))$ denotes a single characteristic curve. Therefore if we want to obtain $P(x_l, x_j; t_{k+1})$ for some specific point $(x_l, x_j)$, we need to solve the characteristic curve between $t_k$ and $t_{k+1}$ by finding the solution of the following backward system of ordinary differential equations:
\begin{equation}\label{eq:ch1}
  \left\{
    \begin{array}{@{}l}
      x'(t) = K(x,y), ~~ x(t_{k+1}) = x_l, \\[6pt]
      y'(t) = J(x,y), ~~ y(t_{k+1}) = y_j,
    \end{array}
  \right.
\end{equation}
so that in the last term of \eqref{eq:fp-be}, the values of $x(t_k)$ and $y(t_k)$ can be determined, and the integral of $f$ can be computed. In our scheme, we solve the backward ODE system \eqref{eq:ch1} by the classic Runge-Kutta scheme, and integrate $f$ using Simpson's rule. In fact, since $K$ and $J$ are independent of $t$, for any given $(x_l, x_j)$, the point $(x(t_k), y(t_k))$ and the integral of $f$ does not change if $\Delta t$ does not change. Therefore, in our implementation, we just choose a fixed time step so that these quantities need to be computed only once for each spatial grid point $(x_l, x_j)$. Numerically, the integral of $P(x,y;t)$ may deviate from one, and we scale the whole function after each time step by multiplying a constant to restore this property.

For the spatial discretization in our simulations, we adopt the finite difference method and the Fourier spectral method in different cases. The Fourier spectral method provides good accuracy for the derivatives, which are needed in the asymptotic expansions to be studied in \cref{sec:asymp}; while in the study of the decay of the probability, we adopt the finite difference method to avoid aliasing error appearing when periodizing the domain in the Fourier spectral method, which may ruin the tail of the probability density function. In what follows, we provide some details of these two methods.

\subsubsection{Finite difference scheme}
We adopt the uniform grid with $N \times M$ cells, each of which has the size of $\Delta x$ and $\Delta y$ in the $x$ and $y$ directions, respectively. Suppose that at time $t_k$ the probability is $P(x,y;t_k)$, and we want to determine $P$ at time $t_{k+1}$ with a central difference scheme to approximate $\partial_{xx}$. Then the full discretization of \eqref{eq:fp3} at point $(x_l,y_j)$ is given as 
\begin{equation}\label{eq:fp-fd}
    P(x_l,y_j;t_{k+1}) - \frac{\Delta t}{(\Delta x)^2} \left[ 
    P(x_{l+1},y_j;t_{k+1}) - 2P(x_l,y_j;t_{k+1}) + P(x_{l-1},y_j;t_{k+1})\right]
    = \lambda_{l,j}(\Delta t) P(\tilde{x}_l,\tilde{y}_j;t_k),
\end{equation}
where $(\tilde{x}_l, \tilde{y}_j) = (x(t_k), y(t_k))$ is obtained by solving the equation \eqref{eq:ch1}, and $\lambda_{l,j}(\Delta t)$ is the exponential of the integral of $f$ in \eqref{eq:fp-be}. For any points locating outside the computational domain, we set the value of $P$ to be zero. In general, the point $(\tilde{x}_l, \tilde{y}_j)$ is not on the grid point, and the value of $P(\tilde{x}_l, \tilde y_j;t_k)$ is obtained from the bilinear interpolation of $P(x,y;t_k)$. Defining
\begin{equation*}
  \mathbf{P}^{(k+1)}_j = \left[P(x_1,y_j;t_{k+1}), P(x_2,y_j;t_{k+1}),
    \cdots, P(x_N,y_j;t_{k+1})  \right]^{\top}, \quad j = 1,\cdots,M,
\end{equation*}
by \eqref{eq:fp-fd}, we are required to solve the linear systems $S \mathbf{P}^{(k+1)}_j = \mathbf{b}^{(k+1)}_j, \quad j = 1,\cdots,M$ with $S \in \mathbb{R}^{N\times N}$ being a tri-diagonal matrix and $\mathbf{b}^{(k+1)}_j$ corresponding to the right-hand side of \eqref{eq:fp-fd}. These tri-diagonal linear systems can be efficiently solved by the Thomas algorithm.

\subsubsection{Fourier spectral method} \label{sec:Fourier}
To employ the Fourier spectral method, the probability is assumed to be periodic in both real and imaginary directions. This is reasonable if we choose a sufficiently large computational domain such that the probability is sufficiently small on the boundary. Suppose the domain be $[-L_x/2,L_x/2]\times [-L_y/2, L_y/2]$, and the number of Fourier coefficients be $N,M$ in $x,y$ directions, respectively. The probability density function is approximated by
\begin{equation} \label{eq:Fourier}
    P(x,y;t) = \frac{1}{NM} \sum_{n=-N/2}^{N/2-1}
    \sum_{m=-M/2}^{M/2} \hat{P}_{n,m}(t) e^{\frac{2 \pi i}{L_x} n x}
    e^{\frac{2 \pi i}{L_y} m y}.
\end{equation}
Thus we can write down the equation \eqref{eq:fp-be} for $x(t_{k+1}) = x_l$ and $y(t_{k+1}) = y_j$ as
\begin{equation*}
    \frac{1}{NM} \sum_{n=-N/2}^{N/2-1}
    \sum_{m=-M/2}^{M/2} \left( 1+\frac{4\pi^2 n^2}{L_x^2} \Delta t\right) \hat{P}_{n,m}(t_{k+1}) e^{\frac{2 \pi i}{L_x} n x_l}
    e^{\frac{2 \pi i}{L_y} m y_j} = \lambda_{l,j}(\Delta t) P(\tilde{x}_l, \tilde{y}_j; t_k).
\end{equation*}
Applying discrete inverse Fourier transform on both sides, we get the scheme
\begin{equation}\label{eq:fp-fourier}
    \hat P_{n,m}(t_{k+1}) = \left( 1+\frac{4\pi^2 n^2}{L_x^2} \Delta t\right)^{-1} \sum_{l=1}^{N}\sum_{j=1}^M \lambda_{l,j}(\Delta t) P(\tilde{x}_l,\tilde{y}_j; t_k) e^{-\frac{2 \pi i}{L_x} n x_l}
    e^{-\frac{2 \pi i}{L_y} m y_j}.
\end{equation}
Note that the computational cost of the above scheme is $O(M^2N^2)$ since $\tilde{x}_l$ and $\tilde{y}_j$ are not collocation points.



\subsection{Decay of the distribution} \label{sec:decay}
To resolve the ambiguity about the decay rate of steady-state probability density function $P(x,y) := P(x,y;\infty)$, we solve the FP equation for a sufficiently long time until the steady state is attained. For the purpose of visualization, we consider the marginal probability density functions
\begin{equation}\label{eq:toy1-pxy}
    P_x(x) = \int_{-\infty}^{\infty} P(x,y) \,dy, 
    \quad P_y(y) = \int_{-\infty}^{\infty} P(x,y) \,dx,
\end{equation}
and we focus mainly on the cases with $B$ close to $\sqrt{3}$ where conflicting results are observed in the literature as stated in \cref{sec:intro-failcl}. The quantities \eqref{eq:toy1-pxy} are plotted in \cref{fig:toy1-Pxy} for $B$ from $1.5$ to $2.3$. From the figure, we observe the following phenomena: i) $P_y(y)$ drops rapidly for $B\leq 1.7$; ii) when $B$ surpasses $1.8$, the tail of $P_y(y)$ starts to rise up; iii) for $B\ge 2.1$, both $P_y(y)$ and $P_x(x)$ show the power-like decay. These results contradict the statement in \cite{nagata2016justification} that the power decay shows up only when $B$ is greater than $2.6$. In our results, such decay is obvious as early as $B = 2.0$. In fact, we conjecture that such power decay appears immediately when $B$ exceeds $\sqrt{3}$. It is not obvious in the numerical experiments only because of the small coefficient in front of the power decay. Such an argument can be supported by some analysis of the FP equation, which will be clarified in the following two parts.

\begin{figure}[!h]
  \centering
  \includegraphics[width=0.35\textwidth]{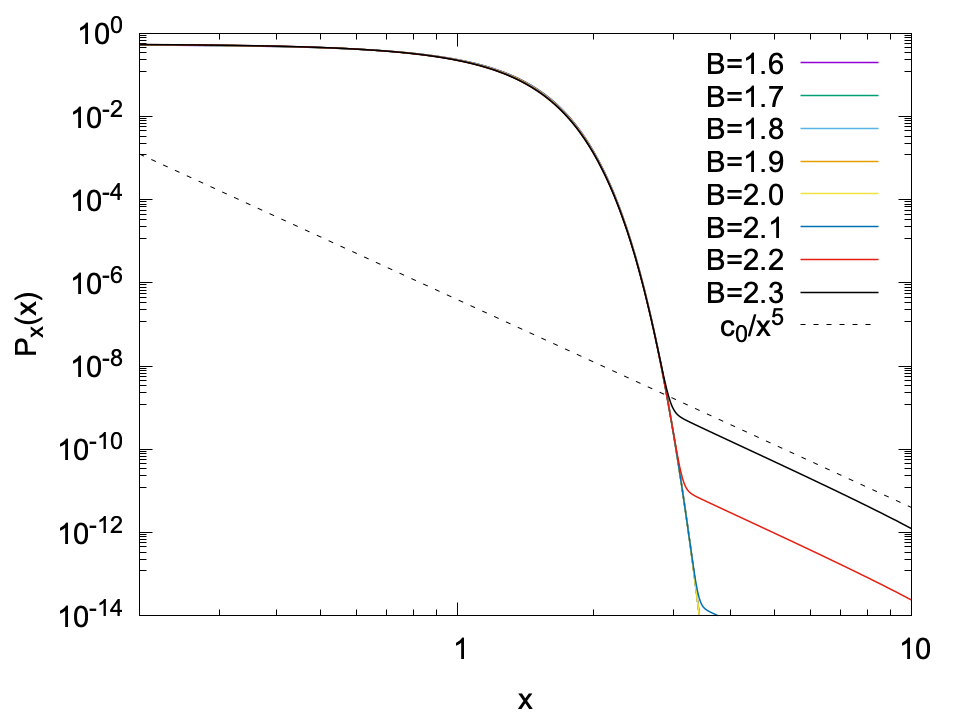}\qquad
  \includegraphics[width=0.35\textwidth]{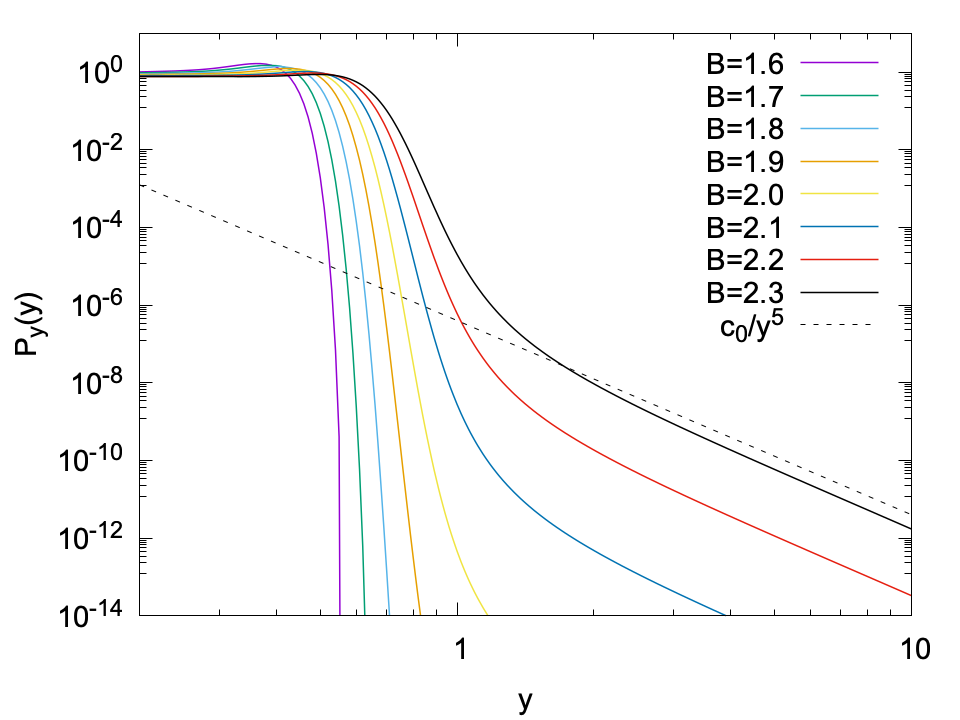}
  \caption{$P_x(x)$ and $P_y(y)$ for different $B$.\label{fig:toy1-Pxy} }
\end{figure}

\subsubsection{Possibility of the exponential decay ($B\leq \sqrt{3}$)}
{In the steady-state Fokker-Planck equation $\partial_x (KP) + \partial_y (JP) = \partial_{xx} P$, both $K$ and $J$ are polynomials, as inspires us to conjecture that the solution $P(x,y)$ may behave like the exponential of a polynomial when $x$ or $y$ is large, so that $P(x,y)$ decays exponentially.} Specifically, we can write $P(x,y)$ as
\begin{displaymath}
P(x,y) \approx \exp(-A r^{\beta}) g(\theta) \qquad \text{for } r \gg 1,
\end{displaymath}
where $(r,\theta)$ is the polar coordinates of $(x,y)$. Straightforward computation yields
\begin{align*}
L^T P &\approx \bigg( \frac{g(\theta )}{2} \left[\beta ^2
   A r^{\beta-2} (Ar^{\beta}-1) + \left(\beta ^2 A^2
   r^{2 \beta-2}-2 \beta  A
   r^{\beta +2}-(\beta -2)
   \beta  A r^{\beta-2}+12
   r^2\right)\cos 2\theta - 2 \beta  A
   r^{\beta}+4\right] \\
& \quad +g'(\theta ) \left[ \left(\beta  A
   r^{\beta-2}+r^2+r^{-2}\right)+B
   \right] \sin 2\theta +g''(\theta) r^{-2} \sin^2 \theta \bigg) \exp(-A r^{\beta}), \qquad \text{for } r \gg 1.
\end{align*}
When $r \rightarrow +\infty$, the term with slowest decay behaves like
\begin{equation*}
\varphi(r) = r^{\max(\beta+2,2\beta-2)} \exp(-A r^{\beta}).
\end{equation*}
By focusing on this leading order term, we have
\begin{equation*}
\frac{L^T P(x,y)}{\varphi(r)} \approx \left\{ \begin{array}{@{}ll}
  -\beta A g(\theta) \cos 2\theta, & \text{if } \beta < 4, \\[3pt]
  -\beta A g(\theta) \cos 2\theta + \beta^2 A^2 g(\theta) \cos^2 \theta, & \text{if } \beta = 4, \\[3pt]
  \beta^2 A^2 g(\theta) \cos^2 \theta, & \text{if } \beta > 4.
\end{array} \right.
\end{equation*}
Since $P(x,y)$ is the steady-state solution of the Fokker-Planck equation, the above quantity must equal zero. For any $\alpha$, this requires that $g(\theta)$ be zero for almost every $\theta$.

If $\beta < 4$, the value of $g(\theta)$ can be nonzero for $\theta = \pm \pi/4$ and $\theta = \pm 3\pi/4$. This means that $P(x,y)$ can be nonzero in two strips parallel to the lines $y = \pm x$. However, such strips cannot be formed due to the diffusion in the $x$ direction. Similarly, if $\beta > 4$, $P(x,y)$ can have nonzero values in the strip perpendicular to the $x$-axis, which is also not allowed because of the Brownian motion. This excludes the choices $\beta < 4$ and $\beta > 4$.

In fact, such an FP equation only allows the localizing strip to be parallel to the $x$-axis, which corresponds to $\theta = 0$ and $\theta = \pi$. When $\beta = 4$, choosing $A = 1/4$ allows us to have $g(\theta) \neq 0$ when either $\theta = 0$ or $\theta = \pi$ holds. As a summary, such analysis shows that if $P(x,y)$ has exponential decay, the only possible choice of $\beta$ is $4$, and in this case, the support of $P(x,y)$ must be confined in a strip-like domain parallel to the $x$-axis. This occurs when $B \leq \sqrt{3}$, as can be demonstrated in the following theorem:

\begin{proposition}\label{thm:toy1support}
  Suppose $0\leq B\leq \sqrt{3}$. There exists a constant $\alpha > 0$ such that $J(x,y)$ defined in \eqref{eq:toy1Ky} satisfies the following conditions:
  \begin{itemize}
      \item[i)] $J(x,\alpha) \leq 0$ for all $x \in \mathbb{R}$.
      \item[ii)] $J(x,-\alpha) \geq 0$ for all $x \in \mathbb{R}$.
  \end{itemize}
\end{proposition}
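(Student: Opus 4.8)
The plan is to treat conditions i) and ii) as statements about the sign of a quadratic polynomial in $x$, reducing each to a discriminant inequality, and then to optimize over the free parameter $\alpha$. First I would substitute $y=\pm\alpha$ into the formula \eqref{eq:toy1Ky}, obtaining
\begin{align*}
J(x,\alpha) &= -\alpha - Bx - 3\alpha x^2 + \alpha^3, \\
J(x,-\alpha) &= \alpha - Bx + 3\alpha x^2 - \alpha^3.
\end{align*}
Hence condition i) is equivalent to $q_+(x) := 3\alpha x^2 + Bx + \alpha(1-\alpha^2) \ge 0$ for all $x\in\mathbb{R}$, and condition ii) is equivalent to $q_-(x) := 3\alpha x^2 - Bx + \alpha(1-\alpha^2) \ge 0$ for all $x\in\mathbb{R}$.

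Next, since the intended choice is $\alpha\in(0,1)$, the leading coefficient $3\alpha$ and the constant term $\alpha(1-\alpha^2)$ are both positive, so $q_\pm$ are convex parabolas, and $q_\pm\ge 0$ everywhere if and only if the corresponding discriminant is nonpositive. Because replacing $x$ by $-x$ turns $q_+$ into $q_-$, the two discriminants coincide and both equal $B^2 - 12\alpha^2(1-\alpha^2)$. Thus conditions i) and ii) hold simultaneously precisely when
\begin{equation*}
B^2 \le 12\,\alpha^2(1-\alpha^2).
\end{equation*}
Finally I would maximize the right-hand side over $\alpha\in(0,1)$: setting $u=\alpha^2\in(0,1)$, the function $12u(1-u)$ attains its maximum value $3$ at $u=1/2$. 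Therefore, for every $B$ with $0\le B\le\sqrt{3}$, the choice $\alpha = 1/\sqrt{2}$ gives $12\alpha^2(1-\alpha^2) = 3 \ge B^2$, which furnishes the required $\alpha$. (When $B<\sqrt{3}$ an entire open interval of admissible $\alpha$ exists; when $B=\sqrt{3}$ the value $\alpha=1/\sqrt{2}$ is forced.)

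There is no serious obstacle in this argument — it is elementary. The only points deserving care are the bookkeeping verifying that the two discriminants are genuinely identical, and the observation that the bound $12u(1-u)\le 3$ is sharp, which is exactly why the hypothesis carries the constant $\sqrt{3}$; this sharpness also foreshadows that $B=\sqrt{3}$ is the borderline value separating the strip‑confined (localized) regime from the power‑decay regime discussed in the surrounding text.
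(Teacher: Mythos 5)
Your proof is correct and takes essentially the same route as the paper: both reduce conditions i) and ii) to the nonnegativity of a convex quadratic in $x$, verified by a discriminant (equivalently, explicit maximum) computation, with the symmetry $x\mapsto -x$ handling the second condition. The only difference is that the paper selects the $B$-dependent value $\alpha=\tfrac{1}{\sqrt{2}}\sqrt{1-\sqrt{1-B^2/3}}$ that makes the maximum exactly zero --- giving the \emph{smallest} admissible strip, which is reused later in \eqref{eq:support} as the exact support width --- whereas your universal choice $\alpha=1/\sqrt{2}$ suffices for the existence statement as posed.
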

\begin{proof}
  Here we only show that $J(x,\alpha) \leq 0$ for all $x \in \mathbb{R}$, and the proof of the other part is almost identical. For simplicity, we define
  \begin{equation*}
      Q(x) := J(x,\alpha) = -3 \alpha x^2 - Bx + \alpha^3 -\alpha.
  \end{equation*}
  The function $Q(x)$ is a quadratic polynomial, whose maximum value can be obtained as
  \begin{equation}\label{eq:Jmax}
      \max_{x\in \mathbb{R}} Q(x) = \frac{B^2}{12\alpha} + \alpha^3 -\alpha
      = \frac{1}{\alpha}\left[ \left(\alpha^2-\frac{1}{2}\right)^2 + \frac{B^2}{12}-\frac{1}{4} 
      \right].
  \end{equation}
  When $0\leq B \le \sqrt{3}$, we can choose
  \begin{equation}\label{eq:supp}
      \alpha = \frac{1}{\sqrt{2}}\sqrt{1-\sqrt{1-\frac{B^2}{3}}},
  \end{equation}
  so that $\max_{x\in \mathbb{R}} Q(x)$ is exactly zero, meaning that $Q(x)$ is always non-positive, which completes the proof of the proposition.
\end{proof}

This proposition shows that when $0\le B \le \sqrt{3}$, the solution of \eqref{eq:sto} satisfies $y(t) \in [-\alpha,\alpha]$ if the initial condition $y(0) \in [-\alpha,\alpha]$. This can be illustrated by plotting the velocity field $(K(x,y), J(x,y))$, which shows that on the lines $y = \pm \alpha$, all the velocities point toward the horizontal axis. Thus $(x(t), y(t))$ can never drift out of the strip between these two lines, causing zero values of $P(x,y)$ for all $|y| > \alpha$. In other words, the distribution $P(x,y)$ has a compact support $[-\alpha,\alpha]$ in the imaginary direction, and our previous analysis shows that the decay rate in the $x$-direction is like $\exp(-x^4/4)$. The right panel of \cref{fig:toy1-Pxy} also validates the existence of such a strip.
 
\subsubsection{Possibility of the power decay ($B > \sqrt{3}$)}
For completeness, we apply the similar analysis to demonstrate the rate of the power decay. Such analysis has already been done in \cite{aarts2013localised}, while the angular function is not included. Here we will carry out a more rigorous analysis by assuming
\begin{equation*}
P(x,y) \approx r^{-\beta} g(\theta) \qquad \text{for } r\gg 1.
\end{equation*}
Thus
\begin{align*}
L^T P &\approx r^{-\beta -2} \bigg(g''(\theta) \sin
   ^2(\theta) + g'(\theta)
   \left(B r^2+
   \left(\beta
   +r^4+1\right)\sin 2\theta \right) \\
 & \quad +\frac{1}{2} g(\theta)
   \left[\beta ^2+ \left(\beta  (\beta +2)-2
   (\beta -6) r^4\right)\cos (2 \theta)
   -2 (\beta -2) r^2\right]\bigg) \\
& \approx [(6-\beta) g(\theta) \cos 2\theta + g'(\theta) \sin 2\theta] r^{2-\beta}.
\end{align*}
Therefore $(6-\beta) g(\theta) \cos 2\theta + g'(\theta) \sin 2\theta = 0$, whose solution is $g(\theta) = C (\sin 2\theta)^{\beta/2-3}$ for any $C \in \mathbb{R}$. Only when $\beta = 6$, the positivity of $P(x,y)$ can be guaranteed. Thus we conclude that
\begin{equation*}
    P(x,y) \approx \frac{C}{(x^2+y^2)^3}, \qquad \text{for } x^2+y^2 \gg 1,
\end{equation*}
which is clearly not a localized probability density function, and should correspond to any $B$ exceeding the critical value $\sqrt{3}$. As a result, the decay of the marginal probability density functions \eqref{eq:toy1-pxy} behave like
\begin{gather*}
    P_x(x) \propto x^{-5}, \qquad \text{for } |x| \gg 1, \\
    P_y(y) \propto y^{-5}, \qquad \text{for } |y| \gg 1,
\end{gather*}
which has been numerically verified as shown in \cref{fig:toy1-Pxy}. The analysis further confirms that the absence of power decay in the numerical results of $B=1.8$ and $1.9$ is due to the smallness of $C$. As we will see later in Section \ref{sec:disappearance}, the values of the probability density function may have dropped below the machine epsilon when the power decay shows up, so that the numerical method is not able to capture such a decay rate. 

\subsection{Effect of the fat-tailed distribution} \label{sec:effect}
Knowing that $B = \sqrt{3}$ separates the two types of decay rates, we would like to study how this affects the observables. When $B$ is large, since the CL method no longer converges to the exact integral, we conclude that at least one of the assumptions \ref{itm:H1}--\ref{itm:H3} is violated. Among the three conditions, the only one that may related to the tail of $P(x,y)$ is \ref{itm:H1}. In fact, in \cite{aarts2010complex}, instead of given as a condition, the assumption \ref{itm:H1} is derived as follows:
\begin{equation} \label{eq:dtau_OP}
\begin{split}
\frac{\partial}{\partial \tau} \int_{\mathbb{R}} \int_{\mathbb{R}}
  \mathcal{O}(x,y;\tau) P(x,y;t-\tau) \,dx \,dy &=
\int_{\mathbb{R}} \int_{\mathbb{R}} \left[
  \frac{\partial \mathcal{O}(x,y;\tau)}{\partial \tau} P(x,y;t-\tau) +
  \mathcal{O}(x,y;\tau) \frac{\partial P(x,y;t-\tau)}{\partial \tau}
\right] \,dx \,dy \\
&= \int_{\mathbb{R}} \int_{\mathbb{R}}
  [P(x,y;t-\tau) L\mathcal{O}(x,y;\tau) - \mathcal{O}(x,y;\tau) L^T P(x,y;t-\tau)] \,dx \,dy,
\end{split}
\end{equation}
which equals zero due to the formal mutual adjointness of $L$ and $L^T$. However, it has also been pointed out in \cite{aarts2010complex, scherzer2019complex} that the above quantity may not vanish if $P(x,y;t)$ does not have sufficient decay. Specifically, in order that \eqref{eq:dtau_OP} equals zero, we need the following limits:
\begin{align}
\label{eq:limits1}
& \lim_{X\rightarrow \infty} \int_{\mathbb{R}} \frac{\partial \mathcal{O}(X,y;\tau)}{\partial x} P(X,y;t-\tau) \,dy = \lim_{X\rightarrow \infty} \int_{\mathbb{R}} \frac{\partial P(X,y;\tau)}{\partial x} \mathcal{O}(X,y;t-\tau) \,dy = 0, \\
\label{eq:limits2}
 & \lim_{X\rightarrow \infty} \int_{\mathbb{R}} K(X,y) \mathcal{O}(X,y;\tau) P(X,y;t-\tau) \,dy = \lim_{Y\rightarrow \infty} \int_{\mathbb{R}} J(x,Y) \mathcal{O}(x,Y;t-\tau) P(x,Y;\tau) \,dx = 0.
\end{align}
Only when these limits hold for all $t$ and $\tau$, we can ensure that integration by parts without boundary terms can be carried out to show that \eqref{eq:dtau_OP} equals zero.

We focus on the second integral in \eqref{eq:limits2} and the other limits can be considered in a similar way. Since the limit \eqref{eq:limits2} must hold for all $t$ and $\tau$, a necessary condition for the validity of the CL method can be obtained by setting $t=\tau$ and letting $\tau$ tend to infinity, which yields 
\begin{equation}\label{eq:Ey}
    \lim_{y\rightarrow \infty} E(y)  = 0, \quad E(y) := \int_{\mathbb{R}} J(x,y) \mathcal{O}(x,y;0) P(x,y;\infty) \, dx. 
\end{equation}
Here $P(x,y;\infty)$ is just the function $P(x,y)$ as stated in the beginning of Section \ref{sec:decay}, and now it is clear that \eqref{eq:Ey} holds only when $P(x,y)$ has sufficient decay when $y \to \infty$. In our case, when $B > \sqrt{3}$ and $|y|$ is large,
\begin{equation*}
E(y) \approx -\int_{\mathbb{R}} (y+B x+3x^2 y - y^3) (x^2-y^2+i2xy)
  \frac{C}{(x^2+y^2)^3} \,dx = -\frac{C \pi (i B+4y^2-1)}{4 y^2},
\end{equation*}
whose real part does not vanish as $y$ tends to infinity. Similarly, the other limit in \eqref{eq:limits2} does not hold either. As a result, biased results are generated. Such phenomenon has also been numerically validated in \cref{fig:toy1-kypo}. When $B \leq \sqrt{3}$, due to the fast decay of $P(x,y)$, we observe unbiased results in the simulations.

\begin{figure}[!ht]
    \centering
    \includegraphics[width=0.35\textwidth]{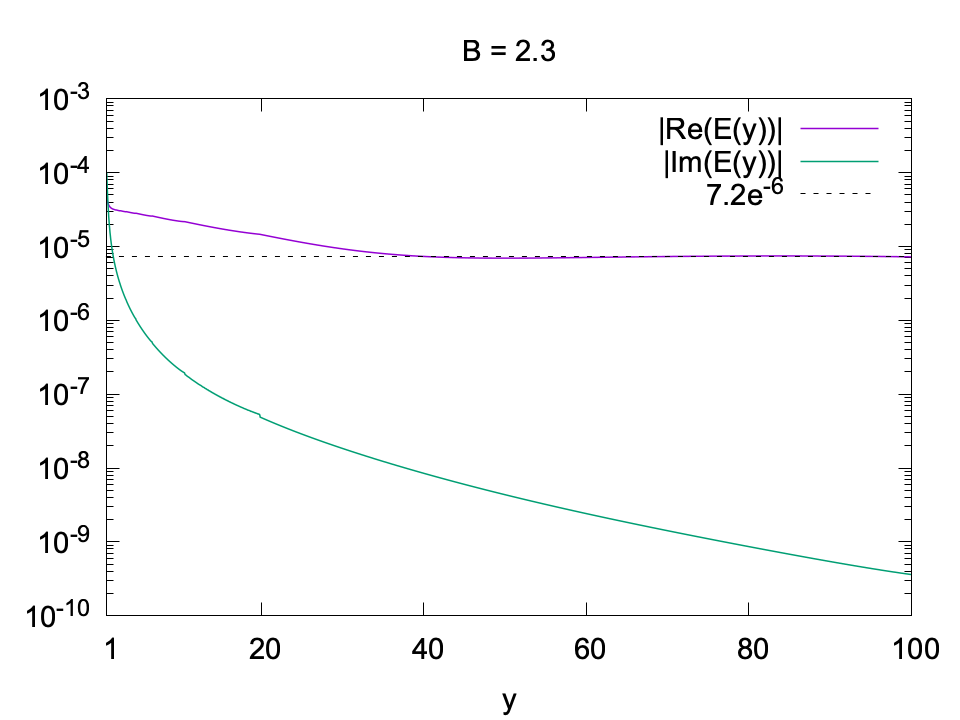}
    \caption{The decay of $E(y)$ for $B=2.3$.}
    \label{fig:toy1-kypo}
\end{figure}

{%
\begin{remark} \label{rem:holomorphic}
If the action $S$ is only meromorphic, meaning that the velocities $K$ and $J$ may contain poles, then the conditions \eqref{eq:limits1}\eqref{eq:limits2} must be supplemented by the corresponding boundary conditions at poles. We refer the readers to a recent paper \cite{seiler2020complex} for some discussions on such cases.
\end{remark}}



\subsection{Asymptotics near $B=\sqrt{3}$} \label{sec:asymp}
By this example, we would like to reveal more on what happens at the critical point $B = \sqrt{3}$. \cref{fig:toy1cl} shows that the two curves, which initially coincide, eventually separate from each other as $B$ increases, meaning that at least one of the curves is not analytic at point $B = \sqrt{3}$. In this section, we would like to study the asymptotics around this point and explain how the analyticity fails.

The most straightforward idea to study the asymptotics is to set $B^{\epsilon} = \sqrt{3} - \epsilon$ and expand the associated probability density function $P^{\epsilon}(x,y)$ by
\begin{equation}\label{eq:expansion1}
  P^{\epsilon}(x,y) = P_0(x,y) + \epsilon P_1(x,y) + \epsilon^2 P_2(x,y) + \cdots.
\end{equation}
By setting $\epsilon=0$, we see that $P^0(x,y)$ corresponds to the probability density function for $B = \sqrt{3}$, and it has been shown in the proof of Proposition \ref{thm:toy1support} that
\begin{equation}\label{eq:support}
  \supp_y P^{\epsilon} = [-\alpha^{\epsilon}/\sqrt{2},
  \alpha^{\epsilon}/\sqrt{2}],\quad \alpha^{\epsilon} 
  = \sqrt{1-\sqrt{1-(B^{\epsilon})^2/3}}.
\end{equation}
Unfortunately, such an expansion does not converge due to the following proposition:

\begin{proposition}\label{thm:support}
  Let $P^{\epsilon}(y)$ be a class of functions defined for every $\epsilon \in (0, \delta)$, and the functions satisfy $\supp P^{\epsilon}(y) = [-\alpha^{\epsilon},\alpha^{\epsilon}]$. Then there does not exist a sequence of functions $\{P_0(y), P_1(y), \cdots\}$ such that the infinite series
  \begin{equation}\label{eq:f-expansion1}
    \sum_{n=0}^{+\infty}\epsilon^n P_n(y)
  \end{equation}
  converges pointwisely to $P^{\epsilon}(y)$ for any $\epsilon \in (0, \delta)$.
\end{proposition}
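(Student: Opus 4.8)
I would argue by contradiction: suppose there is a sequence $\{P_n(y)\}_{n\ge 0}$ with $\sum_{n=0}^{\infty}\epsilon^n P_n(y)=P^\epsilon(y)$ for every $\epsilon\in(0,\delta)$ and every $y$. The first step is to note that, for each fixed $y$, $\sum_n\epsilon^n P_n(y)$ is a power series in $\epsilon$ that converges at every point of $(0,\delta)$; since convergence at one point $\epsilon_1$ forces absolute convergence for $|\epsilon|<|\epsilon_1|$, the radius of convergence is at least $\delta$, so the series represents a function $F_y$ that is real-analytic on $(-\delta,\delta)$ and satisfies $F_y(\epsilon)=P^\epsilon(y)$ on $(0,\delta)$. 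I would also record the elementary facts, read off from \eqref{eq:support}, that $\epsilon\mapsto\alpha^\epsilon$ is continuous, strictly monotone, and positive on $(0,\delta)$.

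Next I would use the fact that the support $[-\alpha^\epsilon,\alpha^\epsilon]$ genuinely shrinks as $\epsilon$ grows. Fix any $\epsilon_0\in(0,\delta)$, choose $\epsilon_1\in(\epsilon_0,\delta)$, and set $I_0=(\alpha^{\epsilon_1},\alpha^{\epsilon_0})$, a nonempty open interval contained in $(-\alpha^{\epsilon_0},\alpha^{\epsilon_0})$. For any $y\in I_0$ the intermediate value theorem gives some $\epsilon(y)\in(\epsilon_0,\epsilon_1)$ with $\alpha^{\epsilon(y)}=y$; then for all $\epsilon\in(\epsilon(y),\delta)$ we have $\alpha^\epsilon<\alpha^{\epsilon(y)}=y$, so $y\notin[-\alpha^\epsilon,\alpha^\epsilon]=\supp P^\epsilon$ and hence $F_y(\epsilon)=P^\epsilon(y)=0$. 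Thus $F_y$ vanishes on the open interval $(\epsilon(y),\delta)$, so by the identity theorem for real-analytic functions $F_y\equiv 0$ on $(-\delta,\delta)$, and in particular $P^{\epsilon_0}(y)=F_y(\epsilon_0)=0$. Since $y\in I_0$ was arbitrary, $P^{\epsilon_0}$ vanishes on the whole open interval $I_0$, so the closure of $\{y:P^{\epsilon_0}(y)\neq 0\}$ cannot contain $I_0$, contradicting $\supp P^{\epsilon_0}=[-\alpha^{\epsilon_0},\alpha^{\epsilon_0}]\supset I_0$.

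The delicate point, I expect, is not any computation but keeping the logic of the support straight: because $\supp P^\epsilon=[-\alpha^\epsilon,\alpha^\epsilon]$ only asserts that the nonzero set is dense in that interval, forcing a single value $P^{\epsilon_0}(y)$ to vanish is not yet a contradiction, so the analyticity argument must be carried out uniformly over an entire subinterval of $y$-values — which is exactly what strict monotonicity and continuity of $\epsilon\mapsto\alpha^\epsilon$ make possible. I would also be careful to state the radius-of-convergence fact explicitly, since it is what upgrades pointwise convergence on $(0,\delta)$ to analyticity on $(-\delta,\delta)$ and thereby licenses the identity theorem.
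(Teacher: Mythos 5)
Your proof is correct and follows essentially the same route as the paper: both exploit the monotonicity of $\alpha^{\epsilon}$ to produce an open interval of $y$-values on which the power series in $\epsilon$ vanishes over an open set of $\epsilon$, invoke the identity theorem for power series to conclude it vanishes for all $\epsilon\in(0,\delta)$, and then contradict the support condition at smaller $\epsilon$. Your write-up is merely more explicit about the radius-of-convergence step and about why the vanishing must hold on a whole subinterval of $y$ rather than at a single point; the substance is identical.
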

\begin{proof}
  Note that $\alpha^{\epsilon}$ monotonically decreases as $\epsilon$ increases. Suppose the series \eqref{eq:f-expansion1} converges to $P^{\epsilon}(y)$ for any $\epsilon \in (0,\delta)$, we know that for
  $\epsilon \in (\delta/2,\delta)$ and
  $y \in (\alpha^{\delta/2}, \alpha^0)$, it holds that
  \begin{equation*}
    P^{\epsilon}(y) = \sum_{n=0}^{+\infty}\epsilon^n P_n(y) = 0.
  \end{equation*}
  Regarding the series as the power series with respect to $\epsilon$, we know that for $y \in (\alpha^{\delta/2}, \alpha^0)$, the value of $P^{\epsilon}(y)$ must be zero for any $\epsilon\in (0,\delta)$. This contradicts the
  assumption that $\supp P^{\epsilon} = [-\alpha^{\epsilon},\alpha^{\epsilon}]$ for $\epsilon \in (0,\delta/2)$.
\end{proof}

The above result shows that in \cref{fig:toy1cl}, the curve of observable predicted by the CL method is likely to be non-analytic. Below we will provide a legitimate asymptotic expansion for $P^{\epsilon}(x,y)$ based on the knowledge of its support.

\subsubsection{The asymptotic expansion}
To avoid the divergence problem arising from the variation of the support, we are going to scale the variable $y$ to align the support of $P^{\epsilon}(x,y)$ for any $\epsilon$. Precisely, we let $a^{\epsilon} = 1/\alpha^{\epsilon}$ and construct a new function $\tilde{P}^{\epsilon}$ as
\begin{equation}\label{eq:scale}
  \tilde{P}^{\epsilon}(x,y) = P^{\epsilon}(x,a^{\epsilon}y),
\end{equation}
such that $\supp_y \tilde P^{\epsilon} = [-1/\sqrt{2},1/\sqrt{2}]$ for any $\epsilon$, and 
the original probability density function can be reconstructed as $P^\epsilon(x,y) = \tilde P^\epsilon(x,\alpha^{\epsilon}y)$. Thus the governing equation of $P^{\epsilon}(x,y)$ is
\begin{equation}\label{eq:FPEtildeP}
  \frac{\partial}{\partial t} \tilde P^{\epsilon}  +
    \frac{\partial}{\partial x}(\tilde K^{\epsilon}\tilde P^{\epsilon}) + 
    \alpha^{\epsilon} \frac{\partial}{\partial y}(\tilde J^{\epsilon}\tilde P^{\epsilon})
    = \frac{\partial^2}{\partial x^2}\tilde{P}_{\epsilon}, 
\end{equation}
in which $\tilde K^\epsilon(x,y) := K(x,a^{\epsilon}y)$ and $\tilde J^\epsilon(x,y) :=J(x,a^{\epsilon}y)$. Note that the Taylor expansion of $a^{\epsilon}$ is
\begin{equation} \label{eq:aexpansion2}
a^{\epsilon} = 1 + a_1 \epsilon^{\frac{1}{2}} + a_2 \epsilon + \cdots, \qquad
a_1 = \frac{\sqrt{2}}{\sqrt[4]{3}}, \quad a_2 = \frac{2}{\sqrt{3}}, \quad \dots.
\end{equation}
{It is worth noting that this expansion is legitimate only for positive $\epsilon$, and} we should therefore expand all the quantities with respect to $\sqrt{\epsilon}$ instead of $\epsilon$. Let
\begin{equation} \label{eq:expansion2}
\tilde P^{\epsilon} = \sum_{k=0}^{+\infty} \epsilon^{\frac{k}{2}}\tilde P_{\frac{k}{2}}, \quad
K^{\epsilon} = \sum_{k=0}^{+\infty} \epsilon^{\frac{k}{2}} \tilde K_{\frac{k}{2}}, \quad
J^{\epsilon} = \sum_{k=0}^{+\infty} \epsilon^{\frac{k}{2}} \tilde J_{\frac{k}{2}}.
\end{equation}
One can figure out all the terms $\tilde{K}_{\frac{k}{2}}$ and $\tilde{J}_{\frac{k}{2}}$ by the analytical expressions of $K(x,y)$ and $J(x,y)$. Then by balancing the terms with various orders of $\epsilon$ in the equation \eqref{eq:FPEtildeP}, we can obtain the equations for $\tilde{P}_{\frac{k}{2}}$:
\begin{equation} \label{eq:sol}
\begin{aligned}
\mathcal{O}(1): \quad &
    \frac{\partial}{\partial t} \tilde P_0 
    + \frac{\partial}{\partial x}(\tilde K_0 \tilde P_0) 
    + \frac{\partial}{\partial y}(\tilde J_0 \tilde P_0) 
    = \frac{\partial^2}{\partial x^2}{\tilde P_0}; \\
\mathcal{O}(\epsilon^{1/2}): \quad &
    \frac{\partial}{\partial t} \tilde P_{\frac{1}{2}} 
    + \frac{\partial}{\partial x}(\tilde K_{0} \tilde P_{\frac{1}{2}}) 
    + \frac{\partial}{\partial y}(\tilde J_{0} \tilde P_{\frac{1}{2}}) 
    + \frac{\partial}{\partial x}(\tilde K_{\frac{1}{2}} \tilde P_0) 
    + \frac{\partial}{\partial y}
      ( \tilde J_{\frac{1}{2}} \tilde P_0 - a_{\frac{1}{2}} \tilde J_0 \tilde P_0) 
    = \frac{\partial^2}{\partial x^2}\tilde{P}_{\frac{1}{2}}; \\
\mathcal{O}(\epsilon): \quad &
    \frac{\partial}{\partial t} \tilde P_{1}  
    + \frac{\partial}{\partial x}(\tilde K_{0} \tilde P_1) 
    + \frac{\partial}{\partial y}(\tilde J_{0} \tilde P_1)  
    + \frac{\partial}{\partial x}
      ( \tilde K_{\frac{1}{2}} \tilde P_{\frac{1}{2}} + \tilde K_1 \tilde P_0 ) \\
    & \quad + \frac{\partial}{\partial y} 
      \left( \tilde J_{1}\tilde P_0  + \tilde J_{\frac{1}{2}} \tilde P_{\frac{1}{2}}  
       - a_{\frac{1}{2}} \tilde J_0 \tilde P_{\frac{1}{2}} 
       + (a_{\frac{1}{2}}^2 - a_1) \tilde J_0 \tilde P_0 \right) 
    = \frac{\partial^2}{\partial x^2} \tilde{P}_{1}; \\
\cdots \qquad & \cdots \qquad \cdots \qquad \cdots \qquad \cdots \qquad \cdots \qquad \cdots \qquad \cdots \qquad \cdots \qquad \cdots
\end{aligned}
\end{equation}
Thus we can obtain each $\tilde{P}_{k/2}$ by solving these equations. Due to the appearance of $\epsilon^{1/2}$, such expansion cannot be extended to negative $\epsilon$, as also implies the non-analyticity of the solution provided by the CL method.

The convergence of the series \eqref{eq:expansion2} can be verified numerically. Instead of solving \eqref{eq:sol} directly, we adopt an alternative way to find the functions $\tilde{P}_{k/2}$. In fact, these quantities are related to the formal expansion \eqref{eq:expansion1}, whose terms satisfy the equations
\begin{equation}\label{eq:fp-expansion1}
    \frac{\partial}{\partial t} P_l 
    + \frac{\partial}{\partial x}(K_0 P_l) + \frac{\partial}{\partial y}(J_0 P_l) 
    - \frac{\partial}{\partial x}(y P_{l-1}) + \frac{\partial}{\partial y}(x P_{l-1}) 
    = \frac{\partial^2}{\partial x^2}{P_l}, \quad l = 1,2,\dots,
\end{equation}
which can be derived by inserting \eqref{eq:expansion1} into the FP equation \eqref{eq:fp} and balancing the terms with the same orders of $\epsilon$. It can be verified that by setting
\begin{equation} \label{eq:P_to_tildeP}
    \tilde P_0 = P_0,\qquad 
    \tilde P_{\frac{1}{2}} = a_{\frac{1}{2}}y \frac{\partial P_0}{\partial y}, \qquad 
    \tilde P_{1} = P_1 + a_1 y \frac{\partial P_0}{\partial y} 
    + \frac{1}{2} a_{\frac{1}{2}}^2 y^2 \frac{\partial^2 P_0}{\partial y^2},\qquad \cdots,
\end{equation}
we can obtain the solutions to the equations \eqref{eq:sol}. The method we adopt is to solve \eqref{eq:fp-expansion1} numerically, and then use \eqref{eq:P_to_tildeP} to convert the results to $\tilde{P}_{k/2}$. In this process, high-order derivatives of the solutions are needed in \eqref{eq:P_to_tildeP}, which requires us to adopt the Fourier spectral method described in \cref{sec:Fourier} to find the numerical solutions.


The computational details are stated as follows. The computational domain is set to be $[-5,5]\times [-1,1]$, which is sufficiently large since $P^{\epsilon}(x,y)$ decays fast in the $x$-direction and has a compact support in the $y$-direction. We choose $N = 480$ and $M = 240$ in \eqref{eq:Fourier}, and after solving $P_0$ from the original FP equation, we compute $P_1$ to $P_5$ successively by solving \eqref{eq:fp-expansion1}. All the equations are solved until a steady state is attained. Based on these functions, one can compute $\tilde{P}_{{k}/{2}}$ up to $k = 11$. Then the results are inserted back to the equation \eqref{eq:expansion2} with the infinite series truncated. In \cref{fig:toy1-expansion}, we plot the function $P^{\epsilon}(x,y)$ with $\epsilon = 0.02$ approximated by different truncations, which shows the convergence of the series of $\tilde{P}^{\epsilon}$ given in \eqref{eq:expansion2}. In particular, we observe that some oscillations appearing in the early truncations are suppressed as we increase the number of terms.

\begin{figure}[!ht]
    \centering
    \includegraphics[width=\textwidth]{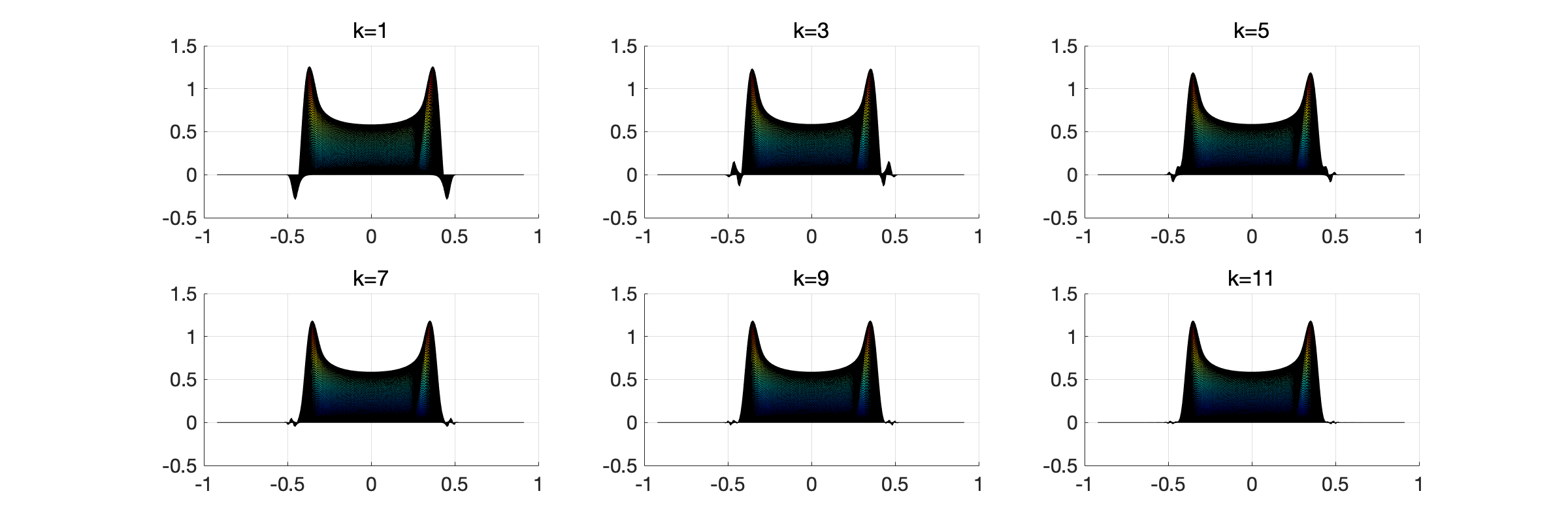}
    \caption{The distribution constructed by expansion \eqref{eq:expansion2}. \label{fig:toy1-expansion} }
\end{figure}

Now we study the analyticity of the curves shown in \cref{fig:toy1cl}. The expectation value for the observable can be related to the scaled function $\tilde{P}^{\epsilon}(x,y)$ by
\begin{equation*}
       \langle O \rangle^{\epsilon} = \int_{\mathbb{R}} \int_{\mathbb{R}}
       O(x+iy) P^{\epsilon}(x,y) \,dx \,dy  
       = a^{\epsilon} \int_{\mathbb{R}} \int_{\mathbb{R}} O(x + ia^{\epsilon}y) 
        \tilde P^{\epsilon}(x,y) \,dx \,dy.
\end{equation*}
The expansion of $O(x+ia^{\epsilon}y)$ can be obtained by substituting the expansion of $a^\epsilon$ \eqref{eq:aexpansion2} to the observable function $O(x+iy)=(x+iy)^2$, which then yields an expansion of $\langle O \rangle^{\epsilon}$:
\begin{equation}\label{eq:oexpansion2}
  \langle O \rangle^{\epsilon} = \sum_{k=0}^{+\infty}\epsilon^{\frac{k}{2}}\langle O \rangle_{\frac{k}{2}},
\end{equation}
where the coefficients $\langle O\rangle_{{k}/{2}}$ can be evaluated by:
\begin{align*}
    \langle O \rangle_0 &= \int_{\mathbb{R}} \int_{\mathbb{R}} O_0 \tilde P_0 \,dx \,dy, \qquad
    \langle O \rangle_{\frac{1}{2}} =  \int_{\mathbb{R}} \int_{\mathbb{R}} 
        (O_0 \tilde P_{\frac{1}{2}} + O_{\frac{1}{2}}\tilde P_0 
        + a_\frac{1}{2} O_0 \tilde P_0)\,dx \,dy,\\[6pt]
    \langle O \rangle_1 &= \int_{\mathbb{R}} \int_{\mathbb{R}} 
        \left( O_0 \tilde P_1 + O_\frac{1}{2} \tilde P_{\frac{1}{2}} + O_1 \tilde P_0
            + a_\frac{1}{2} (O_0 \tilde P_{\frac{1}{2}} + O_\frac{1}{2}\tilde P_0)
            + a_1 O_0 P_0 \right) \,dx \,dy, \qquad \cdots.
\end{align*}
Some numerical values of $\langle O \rangle_{{k}/{2}}$ are tabulated in \cref{tab:Ocoeff}, from which one can see that $\langle O \rangle_{{k}/{2}}$ with half-integer orders are all very small. Actually, using the relations \eqref{eq:P_to_tildeP}, one can show that
\begin{displaymath}
\langle O \rangle_{\frac{k}{2}} = \left\{ \begin{array}{@{}ll}
\displaystyle \int_{\mathbb{R}} \int_{\mathbb{R}} O(x+iy) P_{\frac{k}{2}}(x,y) \,dx \,dy, & \text{if } k \text{ is even}, \\[10pt]
0, & \text{if } k \text{ is odd}.
\end{array} \right.
\end{displaymath}
This indicates that the asymptotic expansion for the expectation $\langle O \rangle^{\epsilon}$ contains only integer orders, as allows us to extend the expansion to negative $\epsilon$, which matches the exact values of the integral (see \cref{fig:Ovse}). Unfortunately, the solution of the FP equation cannot adopt the form of the series \eqref{eq:expansion2} for $\epsilon < 0$, causing failure of the CL computation. 

\begin{table}[!ht]
  \centering
  \caption{Coefficients of $\langle O \rangle_{{k}/{2}}$ in
    \eqref{eq:oexpansion2}. \label{tab:Ocoeff}}
  \begin{tabular}{ll@{\qquad}ll}
    \toprule
    $\langle O \rangle_0$  & $0.3579 - 0.2284i$
    &$\langle O \rangle_{\frac{1}{2}}$   & $-2.4 \times 10^{-14} + 2.3 \times 10^{-15} i$\\\midrule
    $\langle O \rangle_1$  & $0.1136 + 0.0864i$
    &$\langle O \rangle_{\frac{3}{2}}$  & $3.1 \times 10^{-10} + 2.1 \times 10^{-12} i$\\\midrule
    $\langle O \rangle_2$  & $-0.0165 + 0.0360i$
    &$\langle O \rangle_{\frac{5}{2}}$  & $-3.6\times 10^{-7} + 2.0 \times 10^{-9} i$\\\midrule
    $\langle O \rangle_3$  & $-0.0089 - 0.0030i$
    &$\langle O \rangle_{\frac{7}{2}}$  & $-1.9\times 10^{-4} + 4.7 \times 10^{-9} i$\\
    \bottomrule
  \end{tabular}
\end{table}

\begin{figure}[!ht]
  \centering
  \includegraphics[width=0.35\textwidth]{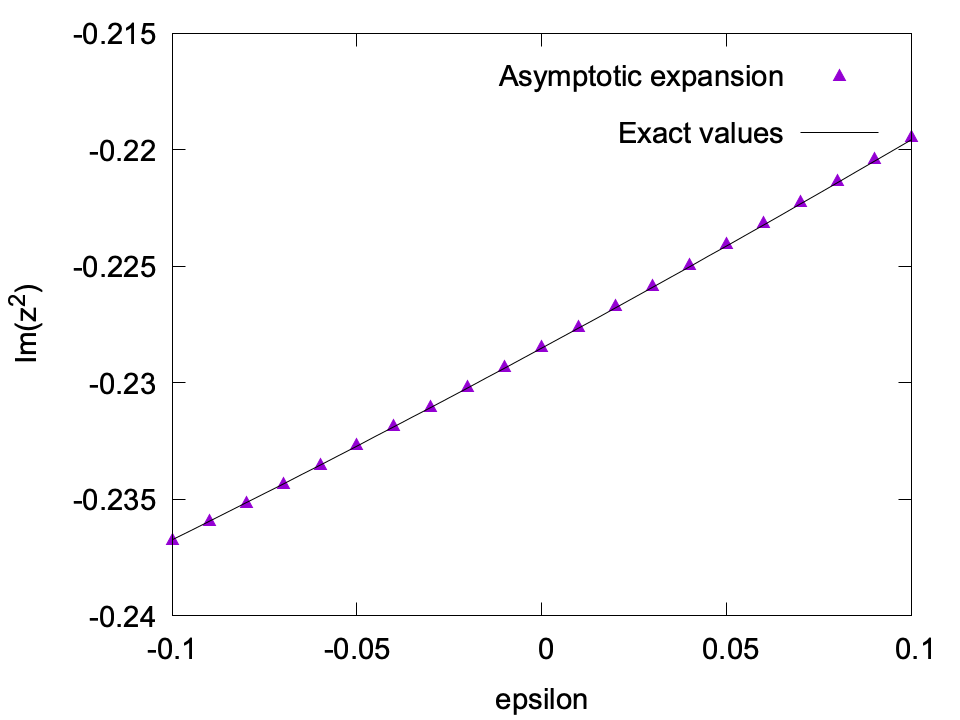}
  \caption{The expectations obtained from expansion \eqref{eq:expansion2} versus
    $\epsilon$.\label{fig:Ovse}}
\end{figure}

\subsubsection{Transition to the global support} \label{sec:disappearance}
Despite the non-analyticity of the observable predicted by the CL method, we expected that the transition is smooth when the global support appears. For $\epsilon < 0$, assume that
\begin{equation*}
P^{\epsilon}(x,y) \approx \frac{C^{\epsilon}}{(x^2+y^2)^3}, \qquad \text{for } x^2 + y^2 \gg 1.
\end{equation*}
We conjecture that the coefficient $C^{\epsilon}$ has the form $\exp(\alpha_0/\epsilon)$, with $\alpha_0$ being a constant. Such a form allows a $C^{\infty}$ transition from zero to a nonzero value, leading to the smoothness of the value of $\langle O \rangle^{\epsilon}$ predicted by the CL method. We verify this numerically by fitting the marginal probability density function $P_y(y)$ (defined in \eqref{eq:toy1-pxy}) for $\epsilon < 0$. It can be expected that $P_y(y) \propto C(y) \exp(\alpha_0 / \epsilon)$ for sufficiently large $y$. Here we pick $y = 1.5$ and $y = 2$, and do the curve fitting in \cref{fig:rexpansion}. Note that when $\epsilon$ is close to zero, the value of $P^{\epsilon}(x,y)$ is very small, so that the accuracy is affected by the round-off error. However, for $\epsilon < -0.4$, the numerical solution perfectly fits our conjecture. This indicates the $C^{\infty}$ transition from local support to global support.

\begin{figure}[htbp]
\centering
\includegraphics[width=0.35\textwidth]{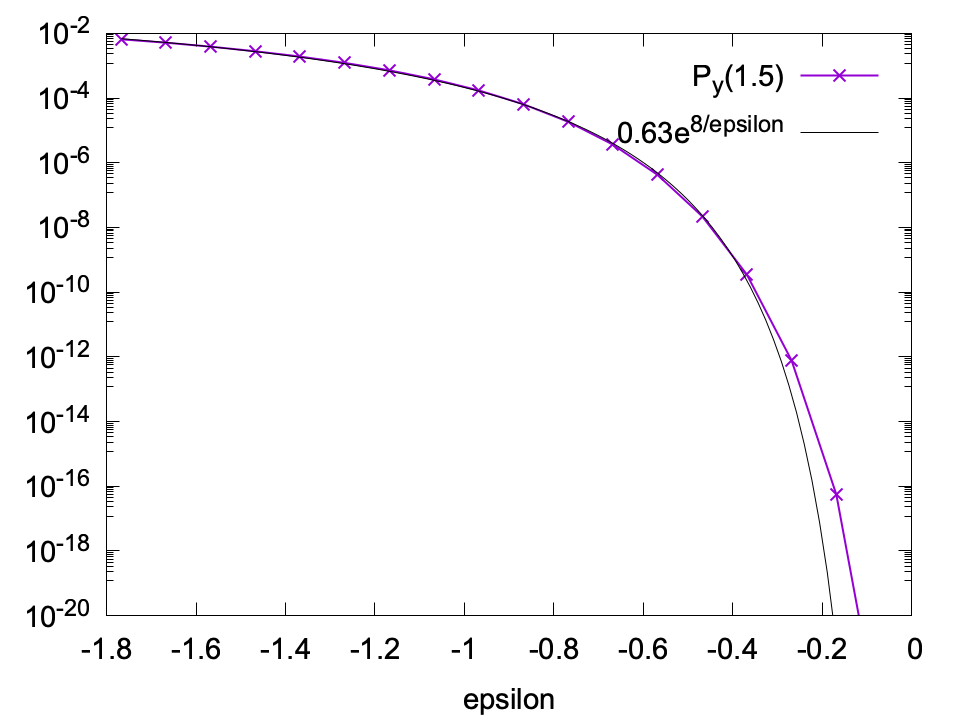}\qquad
\includegraphics[width=0.35\textwidth]{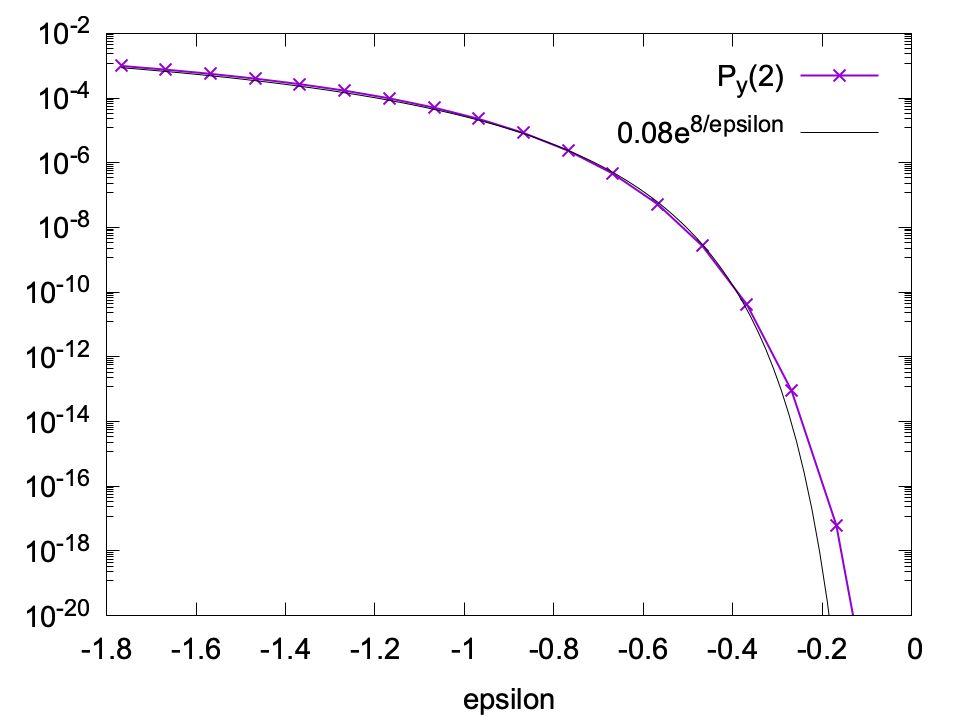}
\caption[]{$P_y$ at point $y=1.5$ and $2$ for different
  $\epsilon$. \label{fig:rexpansion}}
\end{figure}

\subsection{Implications of the model problem}
By a careful analysis of the model problem with action \eqref{eq:toy1action}, we have gained better understanding of some properties of the CL method. In general, the CL method looks quite fragile. For every CL result that looks convergent, we have to analyze the decay of the probability density function to show its validity. This can be done for some simple cases. For example, in the one-dimensional case with $S(x)$ dominated by the monomial $x^k$, we can use the same method as in Section \ref{sec:decay} to show that the decay rate is $P(x,y) \approx (x^2+y^2)^{-(k-1)}$. However, for the multi-dimensional case, such analysis will become more nontrivial, and this becomes {one of obstacles} in the application of the CL method. When the probability density function does not decay sufficiently fast, biased result or even arithmetic overflow may occur. Recently, some numerical techniques to fix such issues have been proposed in \cite{attanasio2019dynamical, scherzer2020controlling}, which require further numerical analysis to understand their numerical error.

Without additional fixes, the CL method may still be valid for a wide range of observables if the probability density function is localized. Such existence usually depends on the parameters in the action. Unfortunately, as the parameter changes, the localization of the support may vanish in an unnoticeable way, which also introduces difficulty in judging the legitimacy of the numerical solution. Even worse, in the field theories, we can prove that such localized probability does not exist if the CL dynamics is not intervened. This will be detailed in the next section.

\section{Non-existence of localized probability in lattice field theories} \label{sec:theory}
In lattice field theories, the variables in the integral are a collection of group elements defined on the lattice. Specifically, we denote lattice nodes in the $(1+d)$-dimensional spacetime by the indices
\begin{equation*}
x = (t,x_1,\dots,x_d), \qquad t = 0,\dots,N_0-1, \quad x_1 = 0,\dots,N_1-1, \quad \dots, \quad x_d = 0,\dots,N_d-1.
\end{equation*}
Here we have assumed that all the lattice nodes are indexed by integers. For simplicity, we let $\mathcal{X} = (\mathbb{Z}/N_0\mathbb{Z}) \times (\mathbb{Z}/N_1\mathbb{Z}) \times \cdots \times (\mathbb{Z}/N_d\mathbb{Z})$ be the range of $x$, which also indicates the periodic boundary condition in our assumption. Below we are going to formulate the CL method for lattice field theories with general groups. We emphasize here that this is the first complete mathematical formulation of the CL method since it was proposed.

For each $x \in \mathcal{X}$ and each $\mu = 0,1,\dots,d$, we define a ``link variable'' $U_{x,\mu} \in G$, where $G$ is a compact Lie group with identity element $e$ and its Lie algebra being $\mathfrak{g}$. Let $\{U\} \in G^{(d+1)N_0 N_1 \cdots N_d}$ be the collection of all these link variables $U_{x,\mu}$. Then both the observable $O(\cdot)$ and the action $S(\cdot)$ are functions of $\{U\}$, and the expectation of the observable is given by
\begin{equation*}
\langle O \rangle = \frac{1}{Z} \int_{G^N} O(\{U\}) \exp\Big(-S(\{U\})\Big) \,d\{U\}, \qquad
Z = \int_{G^N} \exp\Big(-S(\{U\})\Big) \,d\{U\},
\end{equation*}
where the integral is defined by the Haar measure of $G$, and we have used the short-hand $N = (d+1)N_0 N_1 \cdots N_d$ for simplicity.

To apply the CL method to this problem when $S(\cdot)$ is complex, some addition assumptions need to be imposed:
\begin{description}
\item[\namedlabel{itm:A1}{(A1)}] The group $G$ is equipped with a Riemannian metric $\langle \cdot, \cdot \rangle_g$ for every $g \in G$, and the metric is bi-invariant.
\item[\namedlabel{itm:A2}{(A2)}] The group $G$ has a complexification
\begin{equation} \label{eq:Cartan}
G_{\mathbb{C}} = G \cdot \exp(i \mathfrak{g}),
\end{equation}
whose Lie algebra is $\mathfrak{g}_{\mathbb{C}} = \mathfrak{g} \oplus i \mathfrak{g}$.
\item[\namedlabel{itm:A3}{(A3)}] Both $O(\cdot)$ and $S(\cdot)$ can be extended to $G_{\mathbb{C}}^N$ as holomorphic functions.
\end{description}
By \ref{itm:A1}, we can assume that $\{X^1, X^2, \dots, X^m\}$ is an orthonormal basis of $\mathfrak{g}$ under the metric $\langle \cdot, \cdot \rangle_e$. The metric on $G_{\mathbb{C}}$ is chosen as the right-invariant metric:
\begin{equation} \label{eq:metric}
\begin{aligned}
  & \langle X_1 + i X_1', X_2 + i X_2' \rangle_e = \langle X_1, X_2 \rangle_e + \langle X_1', X_2' \rangle_e, \qquad \forall X_1, X_2, X_1', X_2' \in \mathfrak{g}, \\
  & \langle Z_1, Z_2 \rangle_g = \langle (dR_{g^{-1}})_g(Z_1), (dR_{g^{-1}})_g(Z_2) \rangle_e, \qquad \forall g \in G_{\mathbb{C}} \text{ and } Z_1, Z_2 \in T_g G_{\mathbb{C}},
\end{aligned}
\end{equation}
where $R_h: g \mapsto g h$ is the right translation operator, so that $(dR_h)_g$ is the map from the tangent space $T_g G_{\mathbb{C}}$ to the tangent space $T_{gh} G_{\mathbb{C}}$. Note that when $G$ is non-Abelian, this metric on $G_{\mathbb{C}}$ is in general not bi-invariant. The metric on $G_{\mathbb{C}}^N$ can then be naturally defined by summing up the metrics for all the components. Since each element in $\mathfrak{g}$ can be viewed as a right-invariant vector field on $G$, we will use the notation $\mathcal{L}_{X^a_{x,\mu}}$ to denote the Lie derivative with respect to the link variable $U_{x,\mu}$ along the right-invariant vector field $X^a$, and use $\mathcal{L}_{Y^a_{x,\mu}}$ to denote the Lie derivative with respect to the link variable $U_{x,\mu}$ along $Y^a = i X^a$. Thus by \ref{itm:A3}, we know that $S$ satisfies the Cauchy-Riemann equations
\begin{equation*}
\mathcal{L}_{X^a_{x,\mu}} \operatorname{Re}\, S = \mathcal{L}_{Y^a_{x,\mu}} \operatorname{Im}\, S, \qquad
\mathcal{L}_{X^a_{x,\mu}} \operatorname{Im}\, S = -\mathcal{L}_{Y^a_{x,\mu}} \operatorname{Re}\, S.
\end{equation*}
The equations for $O(\cdot)$ is similar. Let $K_{x,\mu}^a = -\mathcal{L}_{X_{x,\mu}^a} \operatorname{Re} S$ and $J_{x,\mu}^a = -\mathcal{L}_{X_{x,\mu}^a} \operatorname{Im} S$. We can then write down the CL equation:
\begin{equation} \label{eq:CL_QCD}
dU_{x,\mu} = \sum_{a=1}^m (dR_{U_{x,\mu}})_e \Big( \left[ K_{x,\mu}^a(\{U\}) \, dt + dw_{x,\mu}^a\right] X^a + \left[J_{x,\mu}^a(\{U\}) \, dt\right] Y^a \Big), \quad x \in \mathcal{X}, \quad \mu = 0,1,\dots,d,
\end{equation}
where the Brownian motions $w_{x,\mu}^a$ are independent of each other for different $x,\mu,a$, and we take the Stratonovich interpretation of the stochastic differential equation in \eqref{eq:CL_QCD}. Since $S$ is complex-valued, the link variable $U_{x,\mu}$ is generally in $G_{\mathbb{C}}$. Thus the CL method approximates the observable by
\begin{equation*}
\langle O \rangle \approx \frac{1}{N} \sum_{k=1}^N O\Big( \{U(T+k\Delta T)\} \Big)
\end{equation*}
for sufficiently large $T$ and sufficient time difference $\Delta T$. Numerically, the equation \eqref{eq:CL_QCD} is solved following
\begin{equation} \label{eq:scheme}
U_{x,\mu}(t+\Delta t) \approx \exp \left(
  \sum_{a=1}^m \left[ \left( \Delta t \, K_{x,\mu}^a(\{U(t)\})
  + \sqrt{2\Delta t} \,\eta_{x,\mu}^a \right) X^a + \Delta t \, J_{x,\mu}^a(\{U(t)\}) Y^a \right]
\right) U_{x,\mu}(t),
\end{equation}
where $\exp(\cdot)$ is the exponential map from $\mathfrak{g}_{\mathbb{C}}$ to $G_{\mathbb{C}}$, and each $\eta_{x,\mu}^a$ is a normally distributed random variable with mean zero and standard deviation one, generated at each time step.

In this presentation, $G$ can be regarded as the counterpart of the real axis, and then $G_{\mathbb{C}}$ is the counterpart of the complex plane. The verification of the CL method in the lattice field theory is similar to \cref{thm:CL}. We first define the non-negative probability density function as $P(\{U\}; t)$ for all $\{U\} \in G_{\mathbb{C}}^N$. The evolution equation of $P(\{U\}; t)$ is
\begin{equation} \label{eq:P_QCD}
\frac{\partial P}{\partial t} + \sum_{x\in \mathcal{X}} \sum_{\mu=0}^d \sum_{a=1}^m \left( \mathcal{L}_{X_{x,\mu}^a} (K_{x,\mu}^a P) + \mathcal{L}_{Y_{x,\mu}^a} (J_{x,\mu}^a P) \right) = \sum_{x \in \mathcal{X}} \sum_{\mu=0}^d \sum_{a=1}^m \mathcal{L}_{X_{x,\mu}^a} \mathcal{L}_{X_{x,\mu}^a} P,
\end{equation}
where
\begin{equation*}
K_{x,\mu}^a = -\mathcal{L}_{X_{x,\mu}^a} \mathrm{Re}\, S, \qquad
J_{x,\mu}^a = -\mathcal{L}_{X_{x,\mu}^a} \mathrm{Im}\, S.
\end{equation*}
Similarly to \eqref{eq:rho}, we define the complex-valued function $\rho(\{U\};t)$ for $\{U\} \in G^N$ as the solution of
\begin{equation} \label{eq:rho_QCD}
\frac{\partial \rho}{\partial t} + \sum_{x\in \mathcal{X}} \sum_{\mu=0}^d \sum_{a=1}^m \mathcal{L}_{X_{x,\mu}^a} [(K_{x,\mu}^a + i J_{x,\mu}^a) \rho] = \sum_{x\in \mathcal{X}} \sum_{\mu=0}^d \sum_{a=1}^m \mathcal{L}_{X_{x,\mu}^a} \mathcal{L}_{X_{x,\mu}^a} \rho.
\end{equation}
The initial condition of \eqref{eq:rho_QCD} is $\rho(\{U\};0) = p(\{U\})$, where $p(\{U\})$ is a probability density function on $G^N$. To describe the initial condition for \eqref{eq:P_QCD}, we need to use the Cartan decomposition \eqref{eq:Cartan}. In fact, the map $G \times \mathfrak{g} \rightarrow G_{\mathbb{C}}$ defined by \eqref{eq:Cartan} is a diffeomorphism. Therefore for every $U_{x,\mu} \in G_{\mathbb{C}}$, there exist unique $V_{x,\mu} \in G$ and $W_{x,\mu} \in \exp(i\mathfrak{g})$ such that $U_{x,\mu} = V_{x,\mu} W_{x,\mu}$. Thus we can define the initial condition of \eqref{eq:P_QCD} as
\begin{equation} \label{eq:P_init}
P(\{U\};t) = p(\{V\}) \prod_{x \in \mathcal{X}} \prod_{\mu=0}^d \delta_e(W_{x,\mu}),
\qquad \{U\} \in G_{\mathbb{C}}^N.
\end{equation}
where $\delta_e(\cdot)$ is the Dirac function defined on $\exp(i\mathfrak{g})$ whose value is infinity at the identity element. Now we are ready to state the theorem describing the validity of the CL method:
\begin{theorem}
Let $P(\{U\};t)$ be the {unique steady state} solution of \eqref{eq:P_QCD} with the initial condition \eqref{eq:P_init}, and $\rho(\{U\};t)$ be the solution of \eqref{eq:rho_QCD} with the initial condition $\rho(\{U\};0) = p(\{U\})$. We further suppose that $\mathcal{O}(\{U\}; t)$ with $\{U\} \in G_{\mathbb{C}}^N$ and $t\geq 0$ satisfies the backward Kolmogorov equation
\begin{equation*}
\frac{\partial \mathcal{O}}{\partial t} = \sum_{x\in \mathcal{X}} \sum_{\mu=0}^d \sum_{a=1}^m \left( K_{x,\mu}^a \mathcal{L}_{X_{x,\mu}^a} \mathcal{O} + J_{x,\mu}^a \mathcal{L}_{Y_{x,\mu}^a} \mathcal{O} \right) + \sum_{x \in \mathcal{X}} \sum_{\mu=0}^d \sum_{a=1}^m \mathcal{L}_{X_{x,\mu}^a} \mathcal{L}_{X_{x,\mu}^a} \mathcal{O}, \quad \mathcal{O}(\{U\}; 0) = O(\{U\}),
\end{equation*}
and it holds that
\begin{equation} \label{eq:H1}
\int_{G_{\mathbb{C}}^N} \mathcal{O}(\{U\}; \tau) P(\{U\}; t-\tau) \,d\{U\}
= \int_{G_{\mathbb{C}}^N} O(\{U\}) P(\{U\}; t) \,d\{U\}
\end{equation}
for any $t > 0$ and $\tau \in [0,t]$. Then for any $t > 0$,
\begin{equation} \label{eq:obs_QCD}
\int_{G^N} O(\{U\}) \rho(\{U\}; t) \,d\{U\} = \int_{G_{\mathbb{C}}^N} \mathcal{O}(\{U\}) P(\{U\}; t) \,d\{U\}.
\end{equation}
\end{theorem}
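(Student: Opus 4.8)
The plan is to replay, in the group setting, the two interpolation arguments behind \cref{thm:CL} (carried out in \cref{sec:proof}), the key simplification being that $G^N$ is compact and boundaryless, so that integration by parts on $G^N$ produces no boundary terms; this makes the analogue of the scalar hypothesis \ref{itm:H3} automatic and leaves \eqref{eq:H1} as the only substantive assumption. Throughout, abbreviate $\sum_{x,\mu,a}:=\sum_{x\in\mathcal{X}}\sum_{\mu=0}^{d}\sum_{a=1}^{m}$. For the first interpolation, \eqref{eq:H1} states exactly that $F(\tau):=\int_{G_{\mathbb{C}}^N}\mathcal{O}(\{U\};\tau)\,P(\{U\};t-\tau)\,d\{U\}$ is independent of $\tau\in[0,t]$. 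Evaluating at $\tau=0$ gives the right-hand side of \eqref{eq:obs_QCD}; evaluating at $\tau=t$ and inserting the initial datum \eqref{eq:P_init}, the Dirac factors $\prod_{x,\mu}\delta_e(W_{x,\mu})$ collapse each link onto its $G$-component $V_{x,\mu}$ through the Cartan decomposition $U_{x,\mu}=V_{x,\mu}W_{x,\mu}$, so that $F(t)=\int_{G^N}\mathcal{O}(\{U\};t)\,p(\{U\})\,d\{U\}$. Hence the right-hand side of \eqref{eq:obs_QCD} equals $\int_{G^N}\mathcal{O}(\{U\};t)\,p(\{U\})\,d\{U\}$.

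For the second interpolation, set $g(s):=\int_{G^N}\mathcal{O}(\{U\};t-s)\,\rho(\{U\};s)\,d\{U\}$ for $s\in[0,t]$, with $\mathcal{O}$ restricted to $G^N$; then $g(0)$ is the quantity just produced and $g(t)=\int_{G^N}O(\{U\})\rho(\{U\};t)\,d\{U\}$ is the left-hand side of \eqref{eq:obs_QCD}, so it suffices that $g'(s)\equiv0$. The decisive point is that the backward Kolmogorov flow preserves holomorphy, so $\mathcal{O}(\cdot;\sigma)$ obeys the Cauchy--Riemann relations $\mathcal{L}_{Y_{x,\mu}^a}\mathcal{O}=i\,\mathcal{L}_{X_{x,\mu}^a}\mathcal{O}$; consequently, restricted to $G^N$, the backward operator acting on $\mathcal{O}$ collapses to
\begin{equation*}
  \mathcal{A}\mathcal{O}\;=\;\sum_{x,\mu,a}\big(K_{x,\mu}^a+iJ_{x,\mu}^a\big)\,\mathcal{L}_{X_{x,\mu}^a}\mathcal{O}\;+\;\sum_{x,\mu,a}\mathcal{L}_{X_{x,\mu}^a}\mathcal{L}_{X_{x,\mu}^a}\mathcal{O},
\end{equation*}
which is exactly the formal $L^2(G^N,d\{U\})$-adjoint of the operator $\mathcal{A}^{\dagger}$ that acts on $\rho$ in \eqref{eq:rho_QCD}. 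Differentiating $g$ and using this together with \eqref{eq:rho_QCD} gives $g'(s)=\int_{G^N}\big[\mathcal{O}\,\mathcal{A}^{\dagger}\rho-(\mathcal{A}\mathcal{O})\,\rho\big]\,d\{U\}$, which vanishes: right-invariant vector fields preserve the Haar measure, so each $\mathcal{L}_{X_{x,\mu}^a}$ is skew-adjoint, and $G^N$ being boundaryless there are no boundary contributions, so integrating by parts makes the two terms cancel. Therefore $g(0)=g(t)$, and combining with the first interpolation yields \eqref{eq:obs_QCD}. (Letting $t\to\infty$ and invoking the group analogue of \ref{itm:H2}, namely $\rho(\{U\};t)\to Z^{-1}e^{-S}$, then identifies the common value with $\langle O\rangle$, as in the remark following \cref{thm:CL}.)

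The genuinely non-routine ingredient is the propagation of holomorphy used above: one must show the backward Kolmogorov semigroup commutes with the Cauchy--Riemann conditions on $G_{\mathbb{C}}^N$. This rests on two Lie-theoretic facts --- that $\mathcal{L}_{X_{x,\mu}^a}$ sends holomorphic functions to holomorphic functions, and that $\mathcal{L}_{X_{x,\mu}^a}S$ is again holomorphic --- both of which follow from the Cauchy--Riemann equations for $S$ combined with the bracket identities $[\,iX^b,X^a\,]=i\,[\,X^b,X^a\,]$ in $\mathfrak{g}_{\mathbb{C}}$ and the structure constants of $\mathfrak{g}$; this is where the non-Abelian geometry (the non-bi-invariance of the metric \eqref{eq:metric}, the curvature of $G_{\mathbb{C}}$) genuinely enters, and I expect it to be the main obstacle. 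Everything else is the usual analytic bookkeeping --- existence, uniqueness and enough regularity for the solutions of \eqref{eq:P_QCD}, \eqref{eq:rho_QCD} and the backward equation; interpreting the singular initial datum \eqref{eq:P_init}, say as a weak-$*$ limit of smooth densities on $G_{\mathbb{C}}^N$; and justifying differentiation under the integral sign in $g'(s)$ as well as the collapse of $F(t)$ --- none of it conceptually new relative to the scalar proof in \cref{sec:proof}.
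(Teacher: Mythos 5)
Your proposal is correct and follows essentially the same route as the paper: the paper omits this proof explicitly, stating it is ``completely parallel to the proof of \cref{thm:CL}'' in \cref{sec:proof}, and your two interpolations (the hypothesis \eqref{eq:H1} collapsed at $\tau=t$ via the Dirac initial datum \eqref{eq:P_init}, plus the constancy of $\int_{G^N}\mathcal{O}(\cdot;t-s)\rho(\cdot;s)\,d\{U\}$ using propagated holomorphy and the adjointness of the restricted backward operator to the operator in \eqref{eq:rho_QCD}) are exactly that transplantation. You also correctly identify the one genuine simplification the paper points out, namely that compactness of $G^N$ makes the analogue of \ref{itm:H3} automatic, and the one step needing care, the propagation of the Cauchy--Riemann relations $\mathcal{L}_{Y^a_{x,\mu}}\mathcal{O}=i\mathcal{L}_{X^a_{x,\mu}}\mathcal{O}$ under the backward flow, which in the scalar case the paper handles by a uniqueness argument for the equation satisfied by the Cauchy--Riemann defect.
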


In the above theorem, the equality \eqref{eq:H1} corresponds to the assumption \ref{itm:H1} in \cref{sec:review}. The assumption \ref{itm:H3} is no longer needed due to the compactness of $G$. If we further assume that \eqref{eq:rho_QCD} has a steady-state solution
\begin{equation*}
\lim_{t\rightarrow +\infty} \rho(\{U\};t) = \rho_{\infty}(\{U\}) = \frac{1}{Z} \exp\Big(-S(\{U\}) \Big), \qquad \{U\} \in G^N,
\end{equation*}
then we can take the limit $t\rightarrow +\infty$ of \eqref{eq:obs_QCD} to validate the CL method. The proof of this theorem is completely parallel to the proof of \cref{thm:CL}, and we omit its details.

{Again, this theorem only gives us an unsatisfactory result due to the strong assumption \eqref{eq:H1}.} To ensure that \eqref{eq:H1} holds, we again need to have conditions similar to \eqref{eq:limits2}. Note that \eqref{eq:limits1} is not necessary again due to the compactness of $G$. The corresponding property holds for any observable $O$ only if the support of $P(\cdot;t)$ is compact for large $t$. Unfortunately, such localized probability density function does not exist, as will be proven in the following subsections. To begin with, we study a simple case where $G = U(1)$.

\subsection{Analysis for $U(1)$ theories} \label{sec:U1}
Due to its simplicity, the $U(1)$ theory is often employed to understand the properties of the CL method that are observed in other group theories \cite{aarts2010complex, salcedo2016does}. Here we also use this simple case to demonstrate our claims without involving the heavy notations in the group theory. When $G = U(1) = \{\exp(i\theta) \mid \theta \in \mathbb{R}\}$, its Lie algebra $\mathfrak{g}$ is the imaginary axis, and the metric can just be defined by
\begin{equation*}
\langle X, Y \rangle_e = X^{\dagger} Y, \qquad \forall X, Y \in \mathfrak{g},
\end{equation*}
where $\dagger$ denotes the complex conjugate. The complexification of $G$ is $G_{\mathbb{C}} = \{\exp(i\theta) \mid \theta \in \mathbb{C}\} = \mathbb{C} \backslash \{0\}$. Therefore the action $S$, as a function on $G_{\mathbb{C}}^N$, can be written as
\begin{displaymath}
S(e^{i\theta_1}, e^{i\theta_2}, \dots, e^{i\theta_N}) =
  S(e^{i(x_1+iy_1)}, e^{i(x_2 + iy_2)}, \dots, e^{i(x_N + iy_N)}),
\end{displaymath}
where we have assumed that $\theta_k = x_k + i y_k$ for $k = 1,\dots,N$. Let $\boldsymbol{x} = (x_1, x_2, \dots, x_N)^T$, $\boldsymbol{y} = (y_1, y_2, \dots, y_N)^T$ and define
\begin{displaymath}
  \bar{S}(\boldsymbol{x}, \boldsymbol{y}) = S(e^{i(x_1+iy_1)}, e^{i(x_2 + iy_2)}, \dots, e^{i(x_N + iy_N)}).
\end{displaymath}
Then $\bar{S}(\boldsymbol{x}, \boldsymbol{y})$ is periodic with respect to $x_1, x_2, \dots, x_N$, and the period is $2\pi$ for each variable. With such notations, the CL equation can be written similarly to \eqref{eq:sto}:
\begin{equation} \label{eq:CL_U1}
  \left\{
    \begin{array}{@{}lll}
      d\boldsymbol{x} = \boldsymbol{K}(\boldsymbol{x},\boldsymbol{y}) \,dt + d\boldsymbol{w},& \boldsymbol{K}(\boldsymbol{x},\boldsymbol{y}) = \mbox{Re}(-\nabla_{\boldsymbol{x}} \bar{S}(\boldsymbol{x}, \boldsymbol{y})),  \\[6pt]
      d\boldsymbol{y} = \boldsymbol{J}(\boldsymbol{x},\boldsymbol{y}) \,dt,& \boldsymbol{J}(\boldsymbol{x},\boldsymbol{y}) = \mbox{Im}(-\nabla_{\boldsymbol{x}} \bar{S}(\boldsymbol{x}, \boldsymbol{y})),
    \end{array}
  \right.
\end{equation}
where $\boldsymbol{w} = (w_1, \dots, w_N)^T$ with each $w_k$ being an independent Brownian motion. The initial condition satisfies $\boldsymbol{y}(0) = 0$. Now we are going to study \eqref{eq:CL_U1}, which has much simpler notations.

In order to localize the probability density function, we need to find a bounded, simply connected domain $\Omega \subset \mathbb{R}^N$ such that $\boldsymbol{J}(\boldsymbol{x}, \boldsymbol{y}) \cdot \boldsymbol{n}(\boldsymbol{y}) \leq 0$ for all $\boldsymbol{x} \in [0,2\pi)^N$ and $\boldsymbol{y} \in \partial \Omega$. Here $\boldsymbol{n}(\boldsymbol{y})$ denotes the outer unit normal vector of $\Omega$ at point $\boldsymbol{y}$. A two-dimensional case is illustrated in \cref{fig:su2local}. {\color{kyred} Note that due to the Brownian motion in the $\boldsymbol{x}$ direction, the domain $\Omega$ must be the same for every $\boldsymbol{x}$.} {Thus once the probability density function is completely attracted into $[0,2\pi)^N \times \overline{\Omega}$, it will forever be confined therein.} Here $\overline{\Omega}$ refers to the closure of $\Omega$. Since $\bar{S}$ is periodic with respect to $\boldsymbol{x}$ and $\boldsymbol{J}$ is the partial gradient of $\operatorname{Im} \bar{S}$ with respect to $\boldsymbol{x}$, by the divergence theorem, we have
\begin{equation} \label{eq:J_int}
\int_{[0,2\pi)^N} \boldsymbol{J}(\boldsymbol{x}, \boldsymbol{y}) \,d\boldsymbol{x} = \bm{0}
\end{equation}
for any $\boldsymbol{y}$. Since $\boldsymbol{n}$ only depends on $\boldsymbol{y}$, it follows that $\int_{[0,2\pi)^N} \boldsymbol{J}(\boldsymbol{x}, \boldsymbol{y})\cdot \boldsymbol{n}(\boldsymbol{y}) \,d\boldsymbol{x}  = 0$. Therefore $\boldsymbol{J}(\boldsymbol{x}, \boldsymbol{y}) \cdot \boldsymbol{n}(\boldsymbol{y}) \le 0$ for any $\boldsymbol{x} \in [0,2\pi)^N$ implies $\boldsymbol{J}(\boldsymbol{x}, \boldsymbol{y}) \cdot \boldsymbol{n}(\boldsymbol{y}) = 0$ for any $\boldsymbol{x} \in [0,2\pi)^N$. By Cauchy-Riemann equations, $\boldsymbol{J}(\boldsymbol{x}, \boldsymbol{y}) = \nabla_{\boldsymbol{y}} (\mathrm{Re}\, \bar{S})$. Therefore we conclude that the existence of localized probability density function requires
\begin{equation*}
\nabla_{\boldsymbol{y}} (\mathrm{Re}\, \bar{S}) \cdot \boldsymbol{n} = 0, \qquad \forall \boldsymbol{x} \in [0,2\pi)^N \text{ and }\boldsymbol{y} \in \partial \Omega.
\end{equation*}
The holomorphism of $\bar{S}$ indicates that $\mathrm{Re}\,\bar{S}$ is a harmonic function. Therefore its periodicity with respect to $\boldsymbol{x}$, together with the above Neumann boundary condition, shows that $\mathrm{Re}\,\bar{S}$ is a constant. As a consequence, $\bar{S}$ is a constant everywhere, as corresponds to the trivial case of the CL equation.

By the above analysis, we know that {for an irreducible action $S$ (meaning that the essential number of independent variables equals $N$),} if the probability density function is localized, it must be localized to {$[0,2\pi)^N \times \{\boldsymbol{y}_0\}$ for some $\boldsymbol{y}_0 \in \mathbb{R}^N$, corresponding to the case where $\bar{S}(\boldsymbol{x},\boldsymbol{y_0})$ is real, which is equivalent to a translated real action.} This is a pessimistic result, which indicates that when applying the CL method, one always needs to check whether the condition \eqref{eq:H1} holds, which is highly nontrivial since $P(\{U\};t)$ is not directly available. {\color{kyred}More precisely, due to the periodicity of $O(x)$, one can see from its Fourier expansion that $O(x + iy)$ is expected to grow at least exponentially in the imaginary direction. To cover this class of observables, we need the probability density function to have a decay rate faster than any exponential functions, which looks like a strong assumption. Such possibility will be discussed in our future works.} For a given observable, we refer the readers to \cite{scherzer2020controlling} for a recent work on the estimation of the boundary terms. The same phenomenon also exists for general group $G$, as will be detailed in the next subsection.

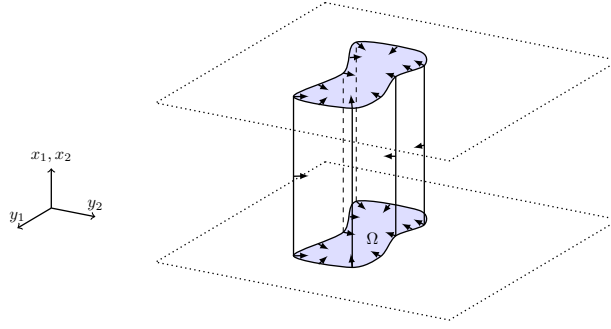
\begin{figure}
\centering
\resizebox{0.5\textwidth}{!}{%
\begin{tikzpicture}[scale=1.8, tdplot_main_coords,axis/.style={->,dashed},thick]
\draw[->] (3, -2, 0) -- (3.8, -2, 0) node [above] {$y_1$};
\draw[->] (3, -2, 0) -- (3, -1.4, 0) node [above] {$y_2$};
\draw[->] (3, -2, 0) -- (3, -2, 0.5) node [above] {$x_1,x_2$};
  
\coordinate  (d1) at (0,0,0){};
\coordinate  (d2) at (4,0,0){};
\coordinate  (d3) at (4,4,0){};
\coordinate  (d4) at (0,4,0){}; 
\coordinate  (d5) at (0,0,2){};
\coordinate  (d6) at (4,0,2){};
\coordinate  (d7) at (4,4,2){};
\coordinate  (d8) at (0,4,2){}; 

\draw [dotted] (d1)--(d2)--(d3)--(d4) -- (d1);                         
\draw [dotted] (d5)--(d6)--(d7)--(d8) -- (d5);

\draw[fill=blue!13] plot [smooth cycle] coordinates { (1,1.,0) (2,1.4,0) (3,1.3,0) (3,2.1,0) (2.5,2.2,0) (1.8,2.,0) (1.3,2.1,0) (0.9,1.8,0) } node at (2,1.8) {$\Omega$};
\draw[fill=blue!13] plot [smooth cycle] coordinates { (1,1.,2) (2,1.4,2) (3,1.3,2) (3,2.1,2) (2.5,2.2,2) (1.8,2.,2) (1.3,2.1,2) (0.9,1.8,2) };

\draw[dashed, thin] (1,1.,0)    -- (1,1.,2)   ;
\draw[dashed, thin] (2,1.4,0)   -- (2,1.4,2)  ;
\draw[] (3,1.3,0)   -- (3,1.3,2)  ;
\draw[] (3,2.1,0)   -- (3,2.1,2)  ;
\draw[] (1.8,2.,0)  -- (1.8,2.,2) ;
\draw[] (1.3,2.1,0) -- (1.3,2.1,2);

\draw[-latex, thick] (2.5,1.35,0) -- (2.6,1.55,0);
\draw[-latex, thick] (3.1,1.7 ,0) -- (2.8,1.65,0);
\draw[-latex, thick] (2.5,2.2 ,0) -- (2.4,2.0 ,0);
\draw[-latex, thick] (1.5,1.2 ,0) -- (1.45,1.315,0);
\draw[-latex, thick] (1.5,2.05,0) -- (1.4,1.85,0);
\draw[-latex, thick] (0.9,1.4 ,0) -- (1.2,1.475,0);

\draw[-latex, thick] (1  ,1   ,0) -- (1.2,1.25,0);
\draw[-latex, thick] (2  ,1.4 ,0) -- (2  ,1.57,0);
\draw[-latex, thick] (3  ,1.3 ,0) -- (2.9,1.45,0);
\draw[-latex, thick] (3  ,2.1 ,0) -- (2.6,1.85,0);
\draw[-latex, thick] (1.8,2.  ,0) -- (1.75,1.825,0);
\draw[-latex, thick] (1.3,2.1 ,0) -- (1.2,1.895,0);

\draw[-latex, thick] (2.5,1.35,2) -- (2.6,1.55,2);
\draw[-latex, thick] (3.1,1.7 ,2) -- (2.8,1.65,2);
\draw[-latex, thick] (2.5,2.2 ,2) -- (2.4,2.0 ,2);
\draw[-latex, thick] (1.5,1.2 ,2) -- (1.4,1.315 ,2);
\draw[-latex, thick] (1.5,2.05,2) -- (1.4,1.85,2);
\draw[-latex, thick] (0.9,1.5 ,2) -- (1.2,1.55,2);

\draw[-latex, thick] (1  ,1   ,2) -- (1.2,1.25,2);
\draw[-latex, thick] (2  ,1.4 ,2) -- (2  ,1.57,2);
\draw[-latex, thick] (3  ,1.3 ,2) -- (2.9,1.45,2);
\draw[-latex, thick] (3  ,2.1 ,2) -- (2.6,1.85,2);
\draw[-latex, thick] (1.8,2.  ,2) -- (1.75,1.825,2);
\draw[-latex, thick] (1.3,2.1 ,2) -- (1.2,1.895,2);

\draw[-latex, thick] (3  ,1.3 ,1) -- (2.9,1.45,1);
\draw[-latex, thick] (1.8,2.  ,1) -- (1.75,1.8,0.95);
\draw[-latex, thick] (1.3,2.1 ,1) -- (1.2,1.895,0.9);

\end{tikzpicture}
}
\caption{Localization of the distribution. The arrows denote the velocity field. \label{fig:su2local}}
\end{figure}

\subsection{Analysis for lattice gauge theories}
For lattice gauge theories, the derivation follows the same idea as the $U(1)$ theories. However, when $G$ is non-Abelian, the separation of the ``real part'' and the ``imaginary part'' becomes non-trivial. To this aim, we define two operators on $G_{\mathbb{C}}$ for any $g \in G_{\mathbb{C}}$:
\begin{equation*}
L_g: h \mapsto gh, \qquad \Psi_g: h \mapsto ghg^{-1},
\end{equation*}
where $L_g$ is the left translation operator, and $\Psi_g$ is known as the inner automorphism. It is obvious that $L_g = R_g \Psi_g$. Then the following proposition holds:
\begin{proposition} \label{prop:real_imag}
Let $U_{\tau}$ be a curve in $G_{\mathbb{C}}$ parametrized by $\tau \in \mathbb{R}$. Suppose $U_{\tau} = V_{\tau} W_{\tau}$ for $V_{\tau} \in G$ and $W_{\tau} \in \exp(i\mathfrak{g})$, and
\begin{equation} \label{eq:VW_tau}
\frac{dV_{\tau}}{d\tau} = (dR_{V_{\tau}})_e (X_{\tau}), \qquad
\frac{dW_{\tau}}{d\tau} = (dR_{W_{\tau}})_e (Y_{\tau}),
\end{equation}
for some $X_{\tau} \in \mathfrak{g}$ and $Y_{\tau} \in i\mathfrak{g}$. Then
\begin{equation} \label{eq:U_tau}
\frac{dU_{\tau}}{d\tau} = (dR_{U_{\tau}})_e \Big(X_{\tau} + (d\Psi_{V_\tau})_e Y_{\tau}\Big).
\end{equation}
Inversely, if \eqref{eq:U_tau} holds, then \eqref{eq:VW_tau} holds. Furthermore, if the vector $n_{\tau} \in T_{W_\tau} \exp(i\mathfrak{g})$ satisfies
\begin{equation*}
\left \langle \frac{dW_{\tau}}{d\tau}, n_{\tau} \right \rangle_{W_{\tau}} = 0.
\end{equation*}
Then
\begin{equation} \label{eq:normal}
\left \langle \frac{dU_{\tau}}{d\tau}, (dL_{V_\tau})_{W_{\tau}}(n_{\tau}) \right \rangle_{U_{\tau}} = 0.
\end{equation}
\end{proposition}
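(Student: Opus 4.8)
The plan is to reduce each of the three assertions to an identity at the Lie-algebra level, using only the right-invariance of the metric on $G_{\mathbb C}$ together with the fact---forced by the bi-invariance in \ref{itm:A1}---that $\mathrm{Ad}_V$ is an isometry of $(\mathfrak g_{\mathbb C},\langle\cdot,\cdot\rangle_e)$ for every $V\in G$. For \eqref{eq:U_tau} I would write $U_\tau=V_\tau W_\tau=R_{W_\tau}(V_\tau)=L_{V_\tau}(W_\tau)$ and apply the Leibniz rule for the group multiplication,
\begin{equation*}
\frac{dU_\tau}{d\tau}=(dR_{W_\tau})_{V_\tau}\left(\frac{dV_\tau}{d\tau}\right)+(dL_{V_\tau})_{W_\tau}\left(\frac{dW_\tau}{d\tau}\right).
\end{equation*}
For the first term one uses $R_{W_\tau}\circ R_{V_\tau}=R_{V_\tau W_\tau}=R_{U_\tau}$, so by \eqref{eq:VW_tau} it equals $(dR_{U_\tau})_e(X_\tau)$. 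For the second term I would establish the operator identity $R_{U_\tau}\circ\Psi_{V_\tau}=L_{V_\tau}\circ R_{W_\tau}$ (both maps send $h\mapsto V_\tau h W_\tau$), differentiate it at $e$ to get $(dL_{V_\tau})_{W_\tau}\circ(dR_{W_\tau})_e=(dR_{U_\tau})_e\circ(d\Psi_{V_\tau})_e$, and apply this to $Y_\tau$, using \eqref{eq:VW_tau} once more, to obtain $(dR_{U_\tau})_e\big((d\Psi_{V_\tau})_e Y_\tau\big)$. Adding the two terms gives \eqref{eq:U_tau}.

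For the converse, I would observe that since the Cartan decomposition $G\times\mathfrak g\to G_{\mathbb C}$ is a diffeomorphism, the factors $V=V(U)\in G$ and $W=W(U)\in\exp(i\mathfrak g)$ depend smoothly on $U$, so \eqref{eq:U_tau} is a genuine ODE on $G_{\mathbb C}$ whose right-hand side is determined by $U_\tau$ (and $X_\tau,Y_\tau$). Let $\widehat V_\tau,\widehat W_\tau$ solve the decoupled system \eqref{eq:VW_tau} with initial data the Cartan factors of $U_0$, and set $\widehat U_\tau=\widehat V_\tau\widehat W_\tau$. By the direct part just proved, $\widehat U_\tau$ satisfies the same equation \eqref{eq:U_tau} with the same initial condition $\widehat U_0=U_0$; uniqueness of ODE solutions gives $\widehat U_\tau=U_\tau$, and uniqueness of the Cartan decomposition then gives $\widehat V_\tau=V_\tau$, $\widehat W_\tau=W_\tau$, which is \eqref{eq:VW_tau}.

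For the orthogonality \eqref{eq:normal} I would first record the lemma that left translation $L_V$ by any $V\in G$ is an isometry of $G_{\mathbb C}$: from $R_{(Vg)^{-1}}\circ L_V=\Psi_V\circ R_{g^{-1}}$ one gets $(dR_{(Vg)^{-1}})_{Vg}\circ(dL_V)_g=\mathrm{Ad}_V\circ(dR_{g^{-1}})_g$, and combined with the definition \eqref{eq:metric} of the metric this reduces the claim to $\mathrm{Ad}_V$ preserving $\langle\cdot,\cdot\rangle_e$, which holds because $\mathrm{Ad}_V$ preserves both $\mathfrak g$ and $i\mathfrak g$ and the metric on $\mathfrak g$ is $\mathrm{Ad}_G$-invariant by \ref{itm:A1}. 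Next, by \eqref{eq:U_tau} and $(d\Psi_{V_\tau})_e=\mathrm{Ad}_{V_\tau}$ we have $\tfrac{dU_\tau}{d\tau}=(dR_{U_\tau})_e(X_\tau+\mathrm{Ad}_{V_\tau}Y_\tau)$. Applying the isometry $(dL_{V_\tau^{-1}})_{U_\tau}$ together with the identity $L_{V_\tau^{-1}}\circ R_{U_\tau}=R_{W_\tau}\circ\Psi_{V_\tau^{-1}}$, I would obtain
\begin{equation*}
\left\langle\frac{dU_\tau}{d\tau},(dL_{V_\tau})_{W_\tau}(n_\tau)\right\rangle_{U_\tau}
=\left\langle(dR_{W_\tau})_e\big(\mathrm{Ad}_{V_\tau^{-1}}X_\tau+Y_\tau\big),\,n_\tau\right\rangle_{W_\tau}.
\end{equation*}
The $Y_\tau$-contribution is $\langle(dR_{W_\tau})_e Y_\tau,n_\tau\rangle_{W_\tau}=\langle\tfrac{dW_\tau}{d\tau},n_\tau\rangle_{W_\tau}=0$ by the hypothesis on $n_\tau$. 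For the remaining term, pull back to $e$ by right-invariance to get $\langle\mathrm{Ad}_{V_\tau^{-1}}X_\tau,\,(dR_{W_\tau^{-1}})_{W_\tau}n_\tau\rangle_e$; this vanishes because $\mathrm{Ad}_{V_\tau^{-1}}X_\tau\in\mathfrak g$, while the right-invariant representative $(dR_{W_\tau^{-1}})_{W_\tau}n_\tau$ of the vector $n_\tau$ tangent to the fiber $\exp(i\mathfrak g)$ lies in $i\mathfrak g$, and $\mathfrak g\perp i\mathfrak g$ under $\langle\cdot,\cdot\rangle_e$ by \eqref{eq:metric}.

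I expect the main obstacle to be twofold. First, there is the careful bookkeeping of the differentials of left/right translations and inner automorphisms---every reduction above rests on a clean composition identity of the form $L\circ R=R\circ\Psi$ or $R\circ R=R$, and getting the base points and the order right is where errors creep in. Second, and more essentially, when $G$ is non-Abelian the metric on $G_{\mathbb C}$ is only right-invariant, so one may \emph{not} translate on the left by arbitrary elements of $G_{\mathbb C}$; the entire third part hinges on first isolating the sub-statement that left translations by the \emph{real} subgroup $G$ remain isometries, which is precisely the point at which the bi-invariance assumption \ref{itm:A1} is consumed.
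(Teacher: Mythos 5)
Your proposal is correct and follows essentially the same route as the paper's proof: the Leibniz rule together with the identity $L_{V}=R_{V}\circ\Psi_{V}$ for the direct part, uniqueness (of the Cartan decomposition / the associated ODE) for the converse, and right-invariance of the metric combined with the facts that $\mathrm{Ad}_{V}$ is an isometry preserving $\mathfrak g$ and $i\mathfrak g$ and that $\mathfrak g\perp i\mathfrak g$ at $e$ for the orthogonality statement. The only differences are organizational (you isolate ``$L_V$ is an isometry for $V\in G$'' as a lemma and pull back to $W_\tau$ rather than to $e$, and you spell out the converse in more detail than the paper's one-line appeal to uniqueness), not substantive.
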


The proof of this proposition will be deferred to \cref{sec:proof2}. From this result, we can rewrite the complex Langevin equation \eqref{eq:CL_QCD} by separating the ``real'' and ``imaginary'' parts. The details are given in the following corollary:

\begin{corollary} \label{corr:CL_VW}
In the complex Langevin equation \eqref{eq:CL_QCD}, if $U_{x,\mu} = V_{x,\mu} W_{x,\mu}$ for $V_{x,\mu} \in G$ and $W_{x,\mu} \in \exp(i\mathfrak{g})$, then
\begin{equation*}
\begin{aligned}
dV_{x,\mu} &= \sum_{a=1}^m \Big( K_{x,\mu}^a(\{U\}) \,dt + dw_{x,\mu}^a \Big) (dR_{V_{x,\mu}})_e (X^a), \\
dW_{x,\mu} &= \sum_{a=1}^m \Big( J_{x,\mu}^a(\{U\}) \,dt \Big) (dR_{W_{x,\mu}})_e \Big( (d\Psi_{V_{x,\mu}^{-1}})_e (Y^a) \Big).
\end{aligned}
\end{equation*}
\end{corollary}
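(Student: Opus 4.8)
The plan is to reduce the corollary to the deterministic identity already established in \cref{prop:real_imag}. Since \eqref{eq:CL_QCD} is taken in the Stratonovich sense, its solution obeys the ordinary rules of calculus, so the smooth Cartan factorization $U_{x,\mu}=V_{x,\mu}W_{x,\mu}$ (smooth because the map in \eqref{eq:Cartan} is a diffeomorphism) transports the stochastic equation pathwise exactly as \cref{prop:real_imag} transports a curve. Thus it suffices to apply the inverse direction of \cref{prop:real_imag}, with the parametrized curve $U_\tau$ replaced by the solution trajectory and the derivatives $dV_\tau/d\tau$, $dW_\tau/d\tau$ reinterpreted as the Stratonovich differentials $dV_{x,\mu}$, $dW_{x,\mu}$.

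First I would identify the driving vector. Writing the right-hand side of \eqref{eq:CL_QCD} as $(dR_{U_{x,\mu}})_e(Z)$ with
\[
Z \;=\; \sum_{a=1}^m \big[K_{x,\mu}^a\,dt + dw_{x,\mu}^a\big] X^a \;+\; \sum_{a=1}^m \big[J_{x,\mu}^a\,dt\big] Y^a \;\in\; \mathfrak{g}_{\mathbb{C}},
\]
the first sum lies in $\mathfrak{g}$ and the second in $i\mathfrak{g}$, since $\{X^a\}$ is a basis of $\mathfrak{g}$ and $Y^a=iX^a$. On the other hand, \eqref{eq:U_tau} presents the driving vector in the form $X_\tau+(d\Psi_{V_{x,\mu}})_e Y_\tau$ with $X_\tau\in\mathfrak{g}$ and $Y_\tau\in i\mathfrak{g}$; because $V_{x,\mu}\in G$, the map $(d\Psi_{V_{x,\mu}})_e$, being the adjoint action of the group element $V_{x,\mu}$, is $\mathbb{C}$-linear on $\mathfrak{g}_{\mathbb{C}}$ and preserves the splitting $\mathfrak{g}_{\mathbb{C}}=\mathfrak{g}\oplus i\mathfrak{g}$, so $(d\Psi_{V_{x,\mu}})_e Y_\tau\in i\mathfrak{g}$ and $X_\tau+(d\Psi_{V_{x,\mu}})_e Y_\tau$ is again a decomposition along $\mathfrak{g}\oplus i\mathfrak{g}$. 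Comparing the two decompositions of $Z$ and using their uniqueness, I would read off
\[
X_\tau = \sum_{a=1}^m \big[K_{x,\mu}^a\,dt + dw_{x,\mu}^a\big] X^a, \qquad
Y_\tau = (d\Psi_{V_{x,\mu}^{-1}})_e\!\Big(\sum_{a=1}^m \big[J_{x,\mu}^a\,dt\big] Y^a\Big) = \sum_{a=1}^m \big[J_{x,\mu}^a\,dt\big] (d\Psi_{V_{x,\mu}^{-1}})_e(Y^a).
\]
In particular $Y_\tau\in i\mathfrak{g}$, so the factor $W_{x,\mu}$ indeed stays in $\exp(i\mathfrak{g})$, consistently with the Cartan factorization.

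It then remains to invoke the inverse part of \cref{prop:real_imag}, which asserts that \eqref{eq:U_tau} forces \eqref{eq:VW_tau}; this gives $dV_{x,\mu}=(dR_{V_{x,\mu}})_e(X_\tau)$ and $dW_{x,\mu}=(dR_{W_{x,\mu}})_e(Y_\tau)$, and substituting the expressions for $X_\tau$ and $Y_\tau$ found above yields exactly the two identities in the statement. The one point that needs genuine care is the reduction in the first paragraph: transferring the deterministic, parametrized-curve statement of \cref{prop:real_imag} to the stochastic setting. This is precisely where the Stratonovich convention does its job — under the It\^o convention the factorization $U_{x,\mu}=V_{x,\mu}W_{x,\mu}$ would acquire second-order It\^o--Stratonovich correction terms and the clean assignment of the noise entirely to the $V$-equation would break. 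I expect this justification, rather than the algebra of the $\mathfrak{g}\oplus i\mathfrak{g}$ decomposition, to be the main obstacle; the remainder is a direct bookkeeping application of \cref{prop:real_imag}.
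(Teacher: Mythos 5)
Your proposal is correct and follows exactly the route the paper intends: the paper omits the proof, remarking only that the corollary is a straightforward consequence of the equivalence between \eqref{eq:VW_tau} and \eqref{eq:U_tau} in \cref{prop:real_imag}, and your identification of $X_\tau$ and $Y_\tau$ via the uniqueness of the $\mathfrak{g}\oplus i\mathfrak{g}$ splitting (using that $(d\Psi_{V_{x,\mu}})_e$ preserves that splitting for $V_{x,\mu}\in G$) is precisely the bookkeeping that makes this work. Your point about the Stratonovich convention being what licenses the pathwise transfer of the deterministic chain-rule argument is also consistent with the paper, which explicitly adopts the Stratonovich interpretation in \eqref{eq:CL_QCD}.
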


This corollary is a straightforward result of the equivalence between \eqref{eq:VW_tau} and \eqref{eq:U_tau}, and we omit its proof. It shows that the Brownian motion allows $V_{x,\mu}$ to explore everywhere in $G$. Therefore if $P(\{U\};t)$ has a compact support for all $t$, the support has the form $\Omega = G^N \cdot \Omega_I$, where $\Omega_I$ is a domain in $[\exp(i\mathfrak{g})]^N$. Like in the $U(1)$ theory, such $\Omega_I$ exists only when the action $S$ is a constant. To show this, we need the following lemma, which is the counterpart of \eqref{eq:J_int} in the $U(1)$ theory:

\begin{lemma} \label{lem:integral}
Let $f$ be a differentiable function on $G_{\mathbb{C}}$, and $n \in T_W \exp(i\mathfrak{g})$ for some $W \in \exp(i\mathfrak{g})$. Then
\begin{equation*}
\int_G \left\langle (dR_{VW})_e \left( \sum_{a=1}^m \mathcal{L}_{X^a} f(VW) Y^a\right), (dL_V)_W (n) \right\rangle_{VW} \,dV = 0,
\end{equation*}
where $\mathcal{L}_{X^a}$ denotes the Lie derivative along the right translationally invariant vector field generated by $X^a$.
\end{lemma}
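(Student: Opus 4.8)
The plan is to carry out, on the compact group $G$, the group-theoretic analogue of the computation that produced \eqref{eq:J_int} for the torus: trivialize everything to the identity of $G_{\mathbb C}$ using the right-invariance of the metric \eqref{eq:metric}, recognize the resulting integrand as a directional derivative of $f$ along the $G$-fibre paired against a fixed vector, and then integrate by parts on $G$ using that right-invariant vector fields are divergence-free for the Haar measure.

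First I would fix $W \in \exp(i\mathfrak g)$ and $n$, write $g = VW$, and rewrite the integrand via \eqref{eq:metric}. Since $(dR_{g^{-1}})_g (dR_g)_e = \mathrm{id}$, the first entry trivializes to $\zeta(V) := \sum_{a} \mathcal L_{X^a} f(VW)\, Y^a \in i\mathfrak g$. For the second entry I would use the identity $R_{(VW)^{-1}} \circ L_V = \Psi_V \circ R_{W^{-1}}$, which is immediate from $L_h = R_h \Psi_h$; differentiating at $W$ and using $R_{W^{-1}}(W) = e$ gives $(dR_{g^{-1}})_g (dL_V)_W(n) = (d\Psi_V)_e\big((dR_{W^{-1}})_W n\big) = \operatorname{Ad}_V(\nu)$, where $\nu := (dR_{W^{-1}})_W n \in \mathfrak g_{\mathbb C}$ does not depend on $V$. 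Hence, by right-invariance of the metric, the integrand equals $\langle \zeta(V), \operatorname{Ad}_V \nu\rangle_e$. Since $\zeta(V) \in i\mathfrak g$, since $\{X^a\}$ and $\{Y^a\}$ are orthonormal with $\mathfrak g \perp i\mathfrak g$ under $\langle\cdot,\cdot\rangle_e$ by \eqref{eq:metric}, and since $\operatorname{Ad}_V$ preserves $i\mathfrak g$, I may replace $\nu$ by its $i\mathfrak g$-component; expanding $\operatorname{Ad}_V\nu$ in the orthonormal basis $\{Y^a\}$ and setting $c_a(V) := \langle \operatorname{Ad}_V\nu, Y^a\rangle_e$, the integral becomes $\int_G \sum_a \big(\mathcal L_{X^a} f(VW)\big)\, c_a(V)\,dV$.

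Next, because $X^a \in \mathfrak g$ we have $\exp(tX^a)V \in G$, so $\mathcal L_{X^a} f(VW) = (\mathcal L^G_{X^a} h)(V)$ where $h(V) := f(VW)$ and $\mathcal L^G_{X^a}$ denotes the Lie derivative on $G$ along the right-invariant field generated by $X^a$. The flow of that field is the left translation $L_{\exp(tX^a)}$, under which the Haar measure is invariant, so the field is divergence-free and integration by parts on the closed manifold $G$ gives $\int_G (\mathcal L^G_{X^a} h)\, c_a\,dV = -\int_G h\, (\mathcal L^G_{X^a} c_a)\,dV$. Summing over $a$, it suffices to show $\sum_{a=1}^m (\mathcal L^G_{X^a} c_a)(V) = 0$ for every $V \in G$. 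I would differentiate $c_a$ along $t \mapsto \exp(tX^a)V$, using $\operatorname{Ad}_{\exp(tX^a)V} = \operatorname{Ad}_{\exp(tX^a)}\operatorname{Ad}_V$ and $\tfrac{d}{dt}\big|_0 \operatorname{Ad}_{\exp(tX^a)} = \operatorname{ad}_{X^a}$, which gives $(\mathcal L^G_{X^a} c_a)(V) = \langle [X^a, \operatorname{Ad}_V\nu], Y^a\rangle_e$. Writing $\operatorname{Ad}_V\nu = iM$ with $M \in \mathfrak g$ and using $Y^a = iX^a$ together with \eqref{eq:metric}, this equals $\langle [X^a, M], X^a\rangle_e$, which vanishes for each $a$: by \ref{itm:A1} the metric is bi-invariant, hence $\operatorname{ad}$-skew, so $\langle [X^a, M], X^a\rangle_e = -\langle M, [X^a, X^a]\rangle_e = 0$. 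This proves the lemma.

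The main difficulty is purely organizational: keeping the left/right translations and their differentials straight on the non-Abelian $G_{\mathbb C}$ — in particular tracking where the conjugations $\Psi_V$ enter (which is precisely the feature that trivializes in the $U(1)$ case, where $\operatorname{Ad}$ is trivial) and verifying that the vector $\nu$ extracted from $n$ is genuinely independent of $V$, so that the integration by parts over $G$ is legitimate. Once the integrand has been brought to the form $\langle \zeta(V), \operatorname{Ad}_V\nu\rangle_e$, the remaining work is the exact analogue of the elementary fact that the integral over the torus of a partial gradient of a periodic function vanishes.
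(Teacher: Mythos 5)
Your proof is correct and follows essentially the same route as the paper's: after trivializing with the right-invariant metric, the integrand becomes the derivative of $\tilde f(V)=f(VW)$ along the left-invariant vector field on $G$ determined by $n$ (your $\sum_a c_a(V)\,\mathcal L^G_{X^a}$ is exactly the Lie derivative along $V\mapsto (dR_V)_e(\mathrm{Ad}_V\nu)$), and the integral of such a derivative over the compact group vanishes. The only difference is that the paper recognizes this as a single Lie derivative along a left-invariant field and cites the standard fact that its integral against Haar measure is zero, whereas you expand in the orthonormal basis, integrate by parts against the right-invariant fields, and verify the resulting divergence term vanishes by hand via the $\mathrm{ad}$-skewness identity $\langle [X^a,M],X^a\rangle_e=0$; both are valid.
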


\begin{proof}
Suppose $n = (dR_W)_e (Y)$ for some $Y \in i\mathfrak{g}$. Then
\begin{equation*}
(dL_V)_W (n) = (dL_V)_W \Big( (dR_W)_e (Y) \Big) = (dR_W)_V \Big( (dL_V)_e (Y) \Big).
\end{equation*}
According to the right translational invariance of the inner product, we have
\begin{equation*}
\left\langle (dR_{VW})_e \left( \sum_{a=1}^m \mathcal{L}_{X^a} f(VW) Y^a \right), (dL_V)_W (n) \right\rangle_{VW} =
  \left\langle (dR_V)_e \left( \sum_{a=1}^m \mathcal{L}_{X^a} f(VW) Y^a \right), (dL_V)_e (Y) \right\rangle_{V}.
\end{equation*}
By further assuming $Y = iX$ for $X \in \mathfrak{g}$ and using the definition of the metric \eqref{eq:metric}, we can rewrite the above expression as
\begin{equation*}
\left\langle (dR_{VW})_e \left(\sum_{a=1}^m \mathcal{L}_{X^a} f(VW) Y^a \right), (dL_V)_W (n) \right\rangle_{VW} =
  \left\langle (dR_V)_e \left( \sum_{a=1}^m \mathcal{L}_{X^a} \tilde{f}(V) X^a\right), (dL_V)_e (X) \right\rangle_{V},
\end{equation*}
where $\tilde{f}(V) = f(VW)$. Thus we can fixed $W$ and consider all terms on the right-hand side of the equation as objects on $G$. From this point of view, the right-hand side is in fact the Lie derivative of $\tilde{f}$ along the left-invariant vector field generated by $X \in \mathfrak{g}$. According to \cite[Theorem 14.35, Corollary 16.13]{lee2013introduction}, its integral equals zero.
\end{proof}

Now we are ready to state and prove our main result:
\begin{theorem}\label{thm:QCD-localized}
Suppose there exists a bounded, simply connected domain $\Omega_I \subset [\exp(i\mathfrak{g})]^N$ such that for $\Omega = G^N \cdot \Omega_I \subset G_{\mathbb{C}}^N$,
\begin{equation}\label{eq:QCD-localize-condition}
  \langle Z(\{U\}), n(\{U\}) \rangle_{\{U\}} \leq 0, \qquad \forall \{U\} \in \partial \Omega,
\end{equation}
where $n$ denotes the outer unit normal of $\Omega$, and $Z(\{U\})$ is the velocity field
\begin{equation}
  Z(\{U\}) = \bigoplus_{x,\mu} \sum_{a=1}^m (dR_{U_{x,\mu}})_e \Big( K_{x,\mu}^a(\{U\}) X^a + J_{x,\mu}^a(\{U\}) Y^a \Big).
\end{equation}
Then ${S}$ is a constant everywhere.
\end{theorem}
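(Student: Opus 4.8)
The plan is to mimic, in the group‑theoretic language, the $U(1)$ argument of Section~\ref{sec:U1}: upgrade the inequality \eqref{eq:QCD-localize-condition} to an equality by averaging over the ``real'' factor $G^N$, read off a homogeneous Neumann condition for $\mathrm{Re}\,S$ on $\partial\Omega$, and then conclude via a harmonic‑function (energy) argument that $\mathrm{Re}\,S$, and hence $S$, is constant. First I would pin down the geometry of $\partial\Omega = G^N\cdot\partial\Omega_I$. A boundary point is $\{U\}=\{V\}\cdot\{W\}$ with $\{V\}\in G^N$, $\{W\}\in\partial\Omega_I$; letting $\{V\}$ vary shows $T_{\{U\}}\partial\Omega$ contains every ``real'' direction $\bigoplus_{x,\mu}(dR_{U_{x,\mu}})_e X^a$, and the last part of Proposition~\ref{prop:real_imag} (equation~\eqref{eq:normal}) identifies the outer unit normal as $n(\{U\})=(dL_{\{V\}})_{\{W\}}(n_I)$ up to normalization, with $n_I$ the outer normal of $\partial\Omega_I$ in $[\exp(i\mathfrak{g})]^N$. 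Splitting $Z=Z_{\mathrm{re}}\oplus Z_{\mathrm{im}}$ into its $X^a$‑part (a real direction, so $\langle Z_{\mathrm{re}},n\rangle=0$) and its $Y^a$‑part, the hypothesis \eqref{eq:QCD-localize-condition} reduces to $\langle Z_{\mathrm{im}},n\rangle\le 0$ on $\partial\Omega$, the exact analogue of $\boldsymbol{J}\cdot\boldsymbol{n}\le 0$ in the $U(1)$ case.

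Next I would average over the fibre, exactly as \eqref{eq:J_int} was used for $U(1)$. Fixing $\{W\}\in\partial\Omega_I$ and integrating $\langle Z_{\mathrm{im}},n\rangle$ over $\{V\}\in G^N$, Fubini isolates a single link; since $J_{x,\mu}^a=\mathcal{L}_{X_{x,\mu}^a}(-\mathrm{Im}\,S)$ and $n_I$ does not depend on $V_{x,\mu}$, Lemma~\ref{lem:integral} shows each fibre integral vanishes. Combined with the pointwise bound and continuity, this forces $\langle Z,n\rangle\equiv 0$ on $\partial\Omega$. By the Cauchy--Riemann relations (which give $J_{x,\mu}^a=\mathcal{L}_{Y_{x,\mu}^a}\mathrm{Re}\,S$) and the orthonormality of the frame $\{(dR)_eX^a,(dR)_eY^a\}$, the field $Z_{\mathrm{im}}$ is the $Y$‑component of $\mathrm{grad}(\mathrm{Re}\,S)$; since $n$ is orthogonal to the $X$‑component, the identity just obtained is precisely the homogeneous Neumann condition $\partial_n(\mathrm{Re}\,S)=0$ on $\partial\Omega$.

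The final step is the harmonic‑function argument on the bounded connected domain $\Omega\subset G_{\mathbb{C}}^N$. The crucial input is that $\mathrm{Re}\,S$ is harmonic for the metric \eqref{eq:metric}: writing the Laplace--Beltrami operator in the right‑invariant orthonormal frame $\{E_i\}$ gives $\Delta f=\sum_i E_iE_i f+\sum_i(\mathrm{div}\,E_i)\,E_i f$, the first‑order terms drop because each $E_i$ is volume‑preserving (the $X^a$‑fields are Killing since left translations by $G$ are isometries of \eqref{eq:metric}, and $G_{\mathbb{C}}$ is unimodular because $\mathfrak{g}_{\mathbb{C}}$ is reductive), and $\sum_i E_iE_i(\mathrm{Re}\,S)=\sum_{x,\mu,a}(\mathcal{L}_{X^a}^2+\mathcal{L}_{Y^a}^2)\mathrm{Re}\,S=\sum_{x,\mu,a}\mathcal{L}_{[X^a,Y^a]}\mathrm{Im}\,S=0$ by Cauchy--Riemann and $[X^a,Y^a]=i[X^a,X^a]=0$ in $\mathfrak{g}_{\mathbb{C}}$. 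Green's identity on $\Omega$ then yields $\int_\Omega|\mathrm{grad}(\mathrm{Re}\,S)|^2=\int_{\partial\Omega}(\mathrm{Re}\,S)\,\partial_n(\mathrm{Re}\,S)=0$, so $\mathrm{Re}\,S$ is constant on $\Omega$; Cauchy--Riemann then makes $\mathrm{Im}\,S$ constant there as well, and the identity theorem on the connected complex manifold $G_{\mathbb{C}}^N$ propagates this to all of $G_{\mathbb{C}}^N$.

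I expect the third step to be the main obstacle: establishing that $\mathrm{Re}\,S$ is genuinely harmonic for the \emph{merely} right‑invariant metric on $G_{\mathbb{C}}$ (it is not bi‑invariant when $G$ is non‑Abelian) requires the careful verification that the first‑order correction terms in $\Delta$ vanish, which is exactly where the unimodularity of $G_{\mathbb{C}}$ and the $G$‑bi‑invariance of $\langle\cdot,\cdot\rangle_e$ on $\mathfrak{g}_{\mathbb{C}}$ are used. A secondary technical point, already latent in the $U(1)$ case, is the boundary regularity needed for the divergence theorem: since $\Omega_I$ is only assumed to be a domain, one would either work on the smooth part of $\partial\Omega$ or replace the energy identity by a boundary maximum principle (Hopf's lemma) applied in local holomorphic coordinates, in which $\mathrm{Re}\,S$ is automatically harmonic.
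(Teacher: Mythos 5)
Your proposal follows essentially the same route as the paper's proof: identify the outer normal of $\Omega = G^N\cdot\Omega_I$ via \eqref{eq:normal} of Proposition~\ref{prop:real_imag}, observe that only the $Y^a$-components of $Z$ contribute to $\langle Z,n\rangle$, average over the fibre $G^N$ using Lemma~\ref{lem:integral} to upgrade the inequality \eqref{eq:QCD-localize-condition} to the equality $\langle Z,n\rangle\equiv 0$, read this as a homogeneous Neumann condition for $\operatorname{Re}S$, and conclude by harmonicity that $S$ is constant. The only differences are cosmetic refinements: you replace the paper's citation of an elliptic uniqueness theorem with the equivalent Green's-identity energy argument, and you spell out (correctly, via unimodularity and the Cauchy--Riemann identity $\sum_a(\mathcal{L}_{X^a}^2+\mathcal{L}_{Y^a}^2)\operatorname{Re}S=\sum_a\mathcal{L}_{[X^a,Y^a]}\operatorname{Im}S=0$) the harmonicity claim that the paper states in one line.
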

\begin{proof}
For $\{W\} \in \partial \Omega_I$, we can write the outer unit normal of $\Omega_I$ in the following form:
\begin{equation*}
n(\{W\}) = \bigoplus_{x,\mu} n_{x,\mu}(\{W\}), \qquad n_{x,\mu}(\{W\}) \in T_{W_{x,\mu}} \exp(i\mathfrak{g}).
\end{equation*}
Since $\partial \Omega = G^N \cdot \partial \Omega_I$, for any $\{U\} \in \partial \Omega$, we can find $\{V\} \in G^N$ and $\{W\} \in [\exp(i\mathfrak{g})]^N$ such that $U_{x,\mu} = V_{x,\mu} W_{x,\mu}$. Then according to \eqref{eq:normal}, the outer normal vector of $\Omega$ at $\{U\} \in \partial \Omega$ is
\begin{equation*}
n_{\{U\}} = \bigoplus_{x,\mu} (dL_{V_{x,\mu}})_{W_{x,\mu}} \Big(n_{x,\mu}(\{W\}) \Big),
\end{equation*}
and thus
\begin{equation*}
\langle Z(\{U\}), n(\{U\}) \rangle_{\{U\}} =
  \sum_{x,\mu} \left\langle (dR_{U_{x,\mu}})_e \left( \sum_{a=1}^m J_{x,\mu}^a(\{U\}) Y^a \right), (dL_{V_{x,\mu}})_{W_{x,\mu}} \Big(n_{x,\mu}(\{W\}) \Big) \right\rangle_{U_{x,\mu}}.
\end{equation*}
Since $J_{x,\mu}^a(\{U\})$ denotes the derivative of $-\operatorname{Im} S$ along $R_{V_{x,\mu}}(X^a)$, we see from \cref{lem:integral} that
\begin{equation*}
\int_{G^N} \langle Z(\{U\}), n(\{U\}) \rangle_{\{U\}} \,d\{V\} = 0.
\end{equation*}
By \eqref{eq:QCD-localize-condition}, the value of $\langle Z(\{U\}), n(\{U\}) \rangle_{\{U\}}$ must be zero for every $\{U\} \in \partial \Omega$, which can be considered as the homogeneous Neumann boundary condition of $\operatorname{Re} S$ on $\partial \Omega$ due to the fact that $J_{x,\mu}^a$ can also be regarded as the derivative of $\operatorname{Re} S$ along $R_{U_{x,\mu}} (Y^a)$, and $K_{x,\mu}^a$ plays no role in the inner product $\langle Z(\{U\}), n(\{U\}) \rangle_{\{U\}}$. Furthermore, the holomorphism of $S(\{U\})$ indicates that $\operatorname{Re} S$ is harmonic. Therefore by the uniqueness of the solutions to elliptic equations \cite[Proposition 7.6]{taylor2011partial}, we know that the real part of $S$ is a constant, and thus $S$ is also a constant.
\end{proof}

\section{Localized probability density functions with gauge cooling technique} \label{sec:gc}
The analysis in \cref{sec:theory} reveals the reason why the application of the CL method is highly delicate. As mentioned in \cref{sec:introduction}, the method became successful mainly after the method of gauge cooling was proposed \cite{seiler2013gauge}. {Before this work, the idea of gauge cooling already exists in some literature \cite{berges2008real, aarts2013stablity}, known as gauge fixing, which is used to study some simple models. In \cite{cai2020does}, the study of gauge fixing is extended to the analysis of gauge cooling for} the one-dimensional $SU(n)$ theory, and it is found that localized probability density functions can sometimes be obtained after applying the gauge cooling technique. However, this is not guaranteed by gauge cooling, and in \cite{cai2020does}, the authors also showed some cases where the probability density functions are global even after gauge cooling is applied. In such cases, it appears in the numerical experiments in \cite{cai2020does} that some legitimate results can also be generated. According to our observation in \cref{sec:toy_model}, such results may also be biased but only with small deviations from the exact integrals. In this work, we will restudy these examples and get a deeper understanding of the gauge cooling technique. To begin with, we will briefly review how gauge cooling works in the lattice field theory.

\subsection{Review of the gauge cooling technique}
The gauge cooling technique is developed based on the gauge invariance of the lattice field theory. For the complexified gauge field $\{U\} \in G_{\mathbb{C}}^N$, we introduce the gauge transformation by
\begin{equation} \label{eq:gauge_transformation}
\widetilde{U}_{x,\mu} = g_x^{-1} U_{x,\mu} g_{x+\hat{\mu}}, \qquad \forall x\in \mathcal{X}, \quad \forall \mu = 0,1,\cdots,d,
\end{equation}
where $g_x \in G_{\mathbb{C}}$ is defined for every $x \in \mathcal{X}$, and $\hat{\mu}$ refers to the canonical unit vector $(0,\cdots,0,1,0,\cdots,0)^T$ whose $\mu$th component is $1$. Here we remind the readers that the periodic boundary condition is used so that $x+\hat{\mu}$ is always well defined. After the transformation, a new field $\{\widetilde{U}\}$ is formed by the link variables $\widetilde{U}_{x,\mu}$. In the gauge theory, both the observable and the action are invariant under gauge transformation:
\begin{equation*}
O(\{\widetilde{U}\}) = O(\{U\}), \qquad S(\{\widetilde{U}\}) = S(\{U\}).
\end{equation*}
As a result, we can apply any gauge transformation at any time during the evolution of the CL equation, which does not introduce any biases. Therefore, after each time step \eqref{eq:scheme}, we can choose suitable $g_x \in G_{\mathbb{C}}$ for each $x \in \mathcal{X}$ and apply the gauge transformation \eqref{eq:gauge_transformation} to $\{U(t)\}$ so that the dynamics can hopefully be stabilized. To choose $g_x$ appropriately, we let $F(\{U\})$ for all $\{U\} \in G_{\mathbb{C}}^N$ be the distance between $\{U\}$ and the submanifold $G^N$, and then solve the following minimization problem:
\begin{equation} \label{eq:minimization}
\argmin_{g_x \in G_{\mathbb{C}} \text{ for all } x \in \mathcal{X}} F(\{\widetilde{U}\}).
\end{equation}
Gauge cooling refers to the gauge transformation \eqref{eq:gauge_transformation} using the solution of the optimization problem \eqref{eq:minimization}. We hope that such a choice of $g_x$ can help pull the sample field closer to $G^N$, causing a faster decay of the probability density function, so that the boundary terms can be reduced or eliminated. A formal justification of this method can be found in \cite{nagata2016justification}, which shows that the gauge cooling is unbiased. Numerically, the optimization problem \eqref{eq:minimization} is solved by gradient descent method with a sufficient number of iterations \cite{seiler2013gauge, aarts2013controlling}.

Once $F(\cdot)$ is chosen, we can define the submanifold
\begin{equation*}
M = \Big\{ \{U\} \,\Big|\, F(\{U\}) \leq F(\{\widetilde{U}\}) \text{ for any } g_x, \, x \in \mathcal{X} \Big\},
\end{equation*}
which is the set of fields with ``optimal guage'' that minimizes the distance to $G^N$. The gauge cooling technique ensures that the field stays on $M$ in the CL method. Therefore the governing equation can be formulated by mapping the right-hand side of \eqref{eq:CL_QCD} to the tangent space of $M$, and this mapping is linear but depends on the choice of the distance $F$.

By restricting the dynamics on the manifold $M$, we may have a chance to localize the probability density function since the argument in the previous section no longer holds. Deeper analysis of the gauge cooling technique requires the detailed form of $F(\{U\})$. One general choice of the distance function is
\begin{equation} \label{eq:dis}
F(\{U\}) = \frac{1}{2} \sum_{x \in \mathcal{X}} \sum_{\mu=0}^d  \langle Y_{x,\mu}, Y_{x,\mu} \rangle_e.
\end{equation}
Here $Y_{x,\mu} \in i\mathfrak{g}$ appears in the Cartan decomposition $U_{x,\mu} = V_{x,\mu} \exp(Y_{x,\mu})$. Interestingly, with such a distance function, we can find that gauge cooling does not have any effect when $G$ is an Abelian group, including the simplest $U(1)$ theory. The result will be proven in the following subsection.

\subsection{Gauge cooling for Abelian groups}
Gauge cooling takes effect only when the gauge field does not lie on the manifold $M$. However, when $G$ is Abelian (so that $G_{\mathbb{C}}$ is also Abelian), the CL dynamics automatically ensures that the field always stays on $M$. The details are given in the following theorem:

\begin{theorem}
Suppose $G$ is an Abelian group and the distance function $F(\{U\})$ is defined by \eqref{eq:dis}. Then the CL equation \eqref{eq:CL_QCD} guarantees that $\{U(t)\} \in M$ for all $t$ if the initial condition $\{U(0)\} \in M$.
\end{theorem}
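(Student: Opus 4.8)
The plan is to show that when $G$ is Abelian, the gauge cooling optimization problem \eqref{eq:minimization} is automatically solved at every step of the CL evolution, so that no gauge transformation is ever needed and the field stays on $M$. The key simplification is that for an Abelian group $G_{\mathbb{C}}$, the inner automorphism $\Psi_g$ is trivial, so the gauge transformation \eqref{eq:gauge_transformation} acts on the Cartan components additively: if $g_x = \exp(\Lambda_x)$ with $\Lambda_x \in \mathfrak{g}_{\mathbb{C}}$, and $U_{x,\mu} = V_{x,\mu}\exp(Y_{x,\mu})$, then $\widetilde{U}_{x,\mu} = g_x^{-1} U_{x,\mu} g_{x+\hat\mu}$ has Cartan component $\widetilde Y_{x,\mu} = Y_{x,\mu} + \operatorname{Im}(\Lambda_{x+\hat\mu} - \Lambda_x)$ (writing $\operatorname{Im}$ for the projection onto $i\mathfrak{g}$), since everything commutes and the $G$-part absorbs the real part. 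Thus $F(\{\widetilde U\}) = \tfrac12\sum_{x,\mu}\langle Y_{x,\mu} + \operatorname{Im}(\Lambda_{x+\hat\mu}-\Lambda_x),\, Y_{x,\mu}+\operatorname{Im}(\Lambda_{x+\hat\mu}-\Lambda_x)\rangle_e$, a quadratic function of the shifts $\lambda_x := \operatorname{Im}\Lambda_x \in i\mathfrak{g}$.

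Next I would characterize $M$ by the first-order optimality condition for this quadratic. Minimizing $F(\{\widetilde U\})$ over all $\{\lambda_x\}$, the gradient with respect to $\lambda_x$ gives, for each $x$ and each algebra direction, the condition $\sum_{\mu=0}^d \big(Y_{x-\hat\mu,\mu} - Y_{x,\mu}\big) = 0$ — i.e.\ a discrete divergence-free condition on the $i\mathfrak{g}$-valued field $\{Y_{x,\mu}\}$ on the periodic lattice. So $\{U\} \in M$ if and only if $\{Y_{x,\mu}\}$ is discretely divergence-free in this sense. (Because $F$ is convex quadratic and $G_{\mathbb{C}}$ Abelian, this stationarity condition is both necessary and sufficient; one should note the shifts live on a lattice with periodic boundary, so the quadratic form is positive semidefinite with the constant mode in its kernel, which is harmless.)

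Finally I would check that the CL dynamics \eqref{eq:CL_QCD} preserves this divergence-free condition. By \cref{corr:CL_VW} with $\Psi$ trivial, the imaginary part evolves by $dY_{x,\mu} = \sum_a J_{x,\mu}^a(\{U\})\,dt\, Y^a$, i.e.\ $\dot Y_{x,\mu} = -\bigoplus_a \big(\mathcal{L}_{X^a_{x,\mu}}\operatorname{Im}S\big) Y^a$ — the drift has no Brownian part in the $i\mathfrak{g}$ direction. Then $\frac{d}{dt}\sum_{\mu}\big(Y_{x-\hat\mu,\mu} - Y_{x,\mu}\big)$ is a sum of Lie derivatives of $\operatorname{Im}S$ along the link variables meeting node $x$; because $S$ is gauge invariant, this combination must vanish identically — this is precisely the statement that the gauge-transformation vector field (whose generator at node $x$ is exactly $\sum_\mu \mathcal{L}_{X^a_{x-\hat\mu,\mu}} - \sum_\mu\mathcal{L}_{X^a_{x,\mu}}$, up to sign conventions) annihilates $S$. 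Hence if the divergence-free condition holds at $t=0$ it holds for all $t$, so $\{U(t)\}\in M$.

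The main obstacle I expect is bookkeeping rather than conceptual: getting the discrete divergence / optimality condition exactly right with the correct signs and index shifts (which neighbours of $x$ carry $+$ and which carry $-$), and then matching that same combination to the infinitesimal generator of the gauge transformation so that gauge invariance of $S$ can be invoked cleanly. One should also be a little careful that the "projection to $i\mathfrak{g}$" of $\log$ of the gauge factor is the right object — for Abelian $G_{\mathbb{C}}$ the logarithm is well behaved modulo the discrete kernel of $\exp$ on $G$, and only its $i\mathfrak{g}$-component enters $F$, so this subtlety does not affect the argument.
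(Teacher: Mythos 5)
Your proposal is correct and follows essentially the same route as the paper's proof: reduce the Abelian gauge transformation to an additive shift of the Cartan components, identify $M$ via the first-order optimality (discrete Poisson) condition as the discretely divergence-free $Y$-fields, observe that the CL drift on $Y$ is governed by $J^a_{x,\mu}$ alone, and invoke gauge invariance of $S$ to show the discrete divergence of $J^a$ vanishes. The paper merely makes your last step concrete by differentiating $S$ along the explicit one-parameter gauge transformation $\widetilde{U}_{x,\mu}(\tau)=\exp(-\tau X^a)U_{x,\mu}$, $\widetilde{U}_{x-\hat\mu,\mu}(\tau)=U_{x-\hat\mu,\mu}\exp(\tau X^a)$.
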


\begin{proof}
Since $G$ is an Abelian group, we can assume that $g_x = \exp(h_x)$ and $U_{x,\mu} = V_{x,\mu} \exp(Y_{x,\mu})$ for $h_x, Y_{x,\mu} \in i\mathfrak{g}$, so that the gauge transformation \eqref{eq:gauge_transformation} becomes
\begin{equation} \label{eq:gc_abelian}
\widetilde{U}_{x,\mu} = V_{x,\mu} \exp(Y_{x,\mu} - h_x + h_{x+\hat{\mu}}), \qquad \forall x \in \mathcal{X}, \quad \forall \mu = 0,1,\dots,d.
\end{equation}
Here it suffices to choose $g_x \in \exp(i\mathfrak{g})$ since adding a factor in $G$ does not change the distance function $F(\{\widetilde{U}\})$. Thus the distance function turns out to be
\begin{equation*}
F(\{\widetilde{U}\}) = \frac{1}{2} \sum_{x \in \mathcal{X}} \sum_{\mu=0}^d \langle Y_{x,\mu} - h_x + h_{x+\hat{\mu}}, Y_{x,\mu} - h_x + h_{x+\hat{\mu}} \rangle_e.
\end{equation*}
This is a quadratic function in the linear space $i\mathfrak{g}$, so that the minimization problem \eqref{eq:minimization} can be solved by solving the first-order optimality condition:
\begin{equation} \label{eq:Poisson}
\sum_{\mu=0}^d (2h_x - h_{x-\hat{\mu}} - h_{x+\hat{\mu}})
  = \sum_{\mu=0}^d (Y_{x,\mu} - Y_{x-\hat{\mu},\mu}),
  \qquad \forall x \in \mathcal{X}.
\end{equation}
This equation has the form of a discrete Poisson equation with periodic boundary condition, and therefore the solution is unique up to a constant, which does not change the gauge transformation. This also indicates that the manifold $M$ can be described by
\begin{equation*}
M = \left\{ \{U\} \Bigg| \prod_{\mu=0}^d U_{x,\mu} U_{x-\hat{\mu},\mu}^{-1} \in G \text{ for all } x \in \mathcal{X} \right\},
\end{equation*}
since $\{U\} \in M$ indicates that the right-hand side of \eqref{eq:Poisson} is zero.

We assume that the initial value of $\{U(0)\}$ lies on $M$. Since $G$ is Abelian, the inner automorphism $\Psi_g$ is the identity operator. According to \cref{corr:CL_VW}, we can derive the following equation for $Y_{x,\mu}$:
\begin{equation*}
dY_{x,\mu} = \sum_{a=1}^m J_{x,\mu}^a(\{U\}) Y^a \,dt.
\end{equation*}
If we can show that
\begin{equation} \label{eq:optimal_gauge}
\sum_{\mu=0}^d (J_{x,\mu}^a - J_{x-\hat{\mu},\mu}^a) = 0,
  \qquad \forall x \in \mathcal{X}, \quad \forall a = 1,\dots,m,
\end{equation}
then it is clear that the right-hand side of \eqref{eq:Poisson} will remain zero if its initial value is zero. The equations \eqref{eq:optimal_gauge} can be seen from the gauge invariance of the action $S(\{U\})$. For given $x \in \mathcal{X}$ and $a = 1,\cdots,m$, consider the gauge transformation
\begin{equation*}
\tilde{U}_{x,\mu}(\tau) = \exp(-\tau X^a) U_{x,\mu}, \qquad \tilde{U}_{x-\hat{\mu},\mu}(\tau) = U_{x-\hat{\mu},\mu} \exp( \tau X^a), \qquad \forall \mu = 0,1,\dots,d,
\end{equation*}
and other components of $\{\widetilde{U}\}$ stay unchanged. Let $\widetilde{S}(\tau) = S\left(\{\widetilde{U}(\tau)\}\right)$ for $\tau \in \mathbb{R}$. By chain rule, it is straightforward to verify that
\begin{equation*}
\operatorname{Im} \frac{d\widetilde{S}}{d\tau} = \sum_{\mu=0}^d (J_{x,\mu}^a - J_{x-\hat{\mu},\mu}^a).
\end{equation*}
The gauge invariance of $S$ shows that $\widetilde{S}$ is a constant. Therefore the above derivative is zero, meaning that \eqref{eq:optimal_gauge} holds.
\end{proof}

The above theorem shows that for the exact CL dynamics, gauge cooling does not change the field. However, {this only refers to the continuous case, where the stochastic differential equation can be exactly solved. Numerically,} after time discretization, the field may deviate from the manifold $M$. In this case, the gauge cooling technique can act as a projection to keep the field on $M$, {which also helps stabilize the dynamics. This is used in a recent work \cite{hirasawa2020complex}, where the $U(1)$ gauge theory is considered. In fact, even in the real Langevin dynamics, in which we are sure that gauge cooling has no effect, applying such technique also helps avoid the possible instability due to computer arithmetic.} Next, we are going to focus on the $SU(n)$ theory, which is non-Abelian so that gauge cooling is expected to be effective.

\begin{remark}
The distance function used in \cite{hirasawa2020complex} is
\begin{equation*}
F(\{U\}) = \sum_{x,\mu} \left[\exp \left(2\sqrt{\langle Y_{x,\mu}, Y_{x,\mu} \rangle_e} \right) + \exp \left(-2\sqrt{\langle Y_{x,\mu}, Y_{x,\mu} \rangle_e}\right) - 2 \right],
\end{equation*}
which differs slightly from our definition \eqref{eq:dis}. We expect that these two functions have similar effects since their leading-order term agrees after Taylor expansion. Further studies are needed to understand the effects of different distance functions.
\end{remark}

\subsection{One-dimensional $SU(n)$ theory} \label{sec:Polyakov}
Now we focus on the $SU(n)$ theory, which is non-Abelian for all $n > 1$, and is most commonly used in lattice QCD. When $G = SU(n)$, its Lie algebra $\mathfrak{g}$ is the space of all traceless skew-Hermitian matrices of order $n$, whose dimension $m = n^2-1$. The metric on $SU(n)$ is defined by
\begin{equation*}
\langle X_1, X_2 \rangle_U = \frac{1}{2} \mathrm{tr} (X_1^{\dagger} X_2), \qquad \forall U \in SU(n), \quad \forall X_1,X_2 \in T_U G,
\end{equation*}
and the complexification of $SU(n)$ is the special linear group $SL(n,\mathbb{C})$. In this case, choosing the distance function as \eqref{eq:dis} may be inconvenient since the calculation of Cartan decomposition is not straightforward. Therefore we follow \cite{seiler2018status} which defines $F(\{U\})$ by
\begin{equation} \label{eq:distance}
F(\{U\}) = \sum_{x \in \mathcal{X}} \sum_{\mu=0}^d [\mathrm{tr}(U_{x,\mu}^{\dagger} U_{x,\mu}) - n].
\end{equation}
It can be shown that the above quantity equals zero only when all $U_{x,\mu}$ are in $SU(n)$. In what follows, we will study the one-dimensional case ($d = 0$) as an extension of the work in \cite{cai2020does}, which also gives some insight of the multi-dimensional case, as will be commented at the end of this section.

For $d = 0$ with periodic boundary conditions, according to the study in \cite{cai2020does}, the manifold $M$ can be characterized as
\begin{equation} \label{eq:manifold}
M = \{ (\lambda_1, \lambda_2, \dots, \lambda_n) \in \mathbb{C}^n \mid \lambda_1 \lambda_2 \cdots \lambda_n = 1\}.
\end{equation}
{In fact, the one-dimensional case is highly similar to the one-link case, whose formulation has been given in \cite{aarts2013stablity}. In the case of $N$ links,} the stochastic differential equation (defined by It\^o calculus) on $M$ has been derived in \cite[Eq. (4.9)]{cai2020does}:\footnote{Compared with the notations in \cite{cai2020does}, we have changed the definition of $w^a$ so that $w^a$ defined in \eqref{eq:KJw} corresponds to the standard Brownian motion. Therefore, compared with equation (4.9) in \cite{cai2020does}, an additional coefficient $1/\sqrt{N}$ is seen in front of the first term on the right-hand side of \eqref{eq:dlambda}.}
\begin{equation} \label{eq:dlambda}
\frac{1}{N} d\lambda_j = -\frac{1}{\sqrt{N}} \sum_{a=1}^m \lambda_j X^a_{jj} \,dw^a - \left[
  \sum_{a=1}^m [(K^a + i J^a) X^a_{jj} + 2 e_j^T X^a \Omega_j X^a e_j] + \frac{2(n^2-1)}{n}
\right] \lambda_j \,dt, \quad j = 1,\dots,n.
\end{equation}
Here $X^1, \dots, X^m$ are the orthonormal basis of $\mathfrak{g}$, and $X_{jj}^a$ is the $j$th diagonal element of $X^a$. The quadratic Casimir invariant is
\begin{equation} \label{eq:Casimir}
-\sum_{a=1}^m X^a X^a = \frac{2(n^2-1)}{n} I,
\end{equation}
where $I$ is the identity matrix. For example, when $n=2$, the basis can be chosen as $X^1 = i \sigma_x$, $X^2 = i \sigma_y$, $X^3 = i \sigma_z$, where $\sigma_{x,y,z}$ are Pauli matrices; similarly, when $n=3$, the basis can be chosen as $i$ times Gell-Mann matrices. In \eqref{eq:dlambda}, the matrix $\Omega_j$ is
\begin{displaymath}
\Omega_j = \mathrm{diag} \left\{ \frac{\lambda_1}{\lambda_1 - \lambda_j}, \dots, \frac{\lambda_{j-1}}{\lambda_{j-1} - \lambda_j}, 0, \frac{\lambda_{j+1}}{\lambda_{j+1} - \lambda_j}, \dots, \frac{\lambda_N}{\lambda_N - \lambda_j} \right\},
\end{displaymath}
and $K^a, J^a$ and $w^a$ are defined by
\begin{equation} \label{eq:KJw}
K^a = \frac{1}{N} \sum_{x \in \mathcal{X}} K_{x,0}^a, \qquad
J^a = \frac{1}{N} \sum_{x \in \mathcal{X}} J_{x,0}^a, \qquad
w^a = \frac{1}{\sqrt{N}} \sum_{x \in \mathcal{X}} w_{x,0}^a.
\end{equation}
Without repeating the details of the derivation in \cite{cai2020does}, we just mention here that the equation \eqref{eq:dlambda} is derived by solving the minimization problem \eqref{eq:minimization} analytically, and then couple the solution into the CL dynamics.

We would now like to study whether it is possible to localize the probability density function on $M$. To clarify the effect of gauge cooling, we first assume that $J^a = 0$, which singles out the effect of gauge cooling. In this case, we have the following result:
\begin{theorem}
If $J^a = 0$ for all $a = 1,\ldots,m$ in \eqref{eq:dlambda}, we have
\begin{displaymath}
\frac{d}{dt} \sum_{j=1}^n |\lambda_j|^2 \leq 0
\end{displaymath}
for any initial value $\lambda_1(0), \dots, \lambda_n(0)$.
\end{theorem}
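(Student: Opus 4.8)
The plan is to apply It\^o's formula to $|\lambda_j|^2 = \lambda_j\overline{\lambda_j}$ for each $j$, sum over $j$, and show that the noise term, the It\^o correction, the Casimir term and the $\Omega_j$ term rearrange into a manifestly non-positive drift, while the action-dependent $K^a$ term drops out entirely. Two structural facts from the setup will be used repeatedly: first, since $\mathfrak{g}$ consists of traceless skew-Hermitian matrices, each diagonal entry $X^a_{jj}$ is purely imaginary and $(X^a)_{kj} = -\overline{(X^a)_{jk}}$; second, reading off the $(j,j)$ entry of the Casimir identity \eqref{eq:Casimir} gives $\sum_{a=1}^m (X^aX^a)_{jj} = -\frac{2(n^2-1)}{n}$, i.e.\ $\sum_{a=1}^m\sum_{k=1}^n |(X^a)_{jk}|^2 = \frac{2(n^2-1)}{n}$.

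\textbf{Killing the noise and the $K^a$ term.} With $J^a=0$ in \eqref{eq:dlambda} the coefficient of $\lambda_j$ against $dw^a$ is proportional to $\lambda_j X^a_{jj}$, so the coefficient of $|\lambda_j|^2$ against $dw^a$ is proportional to $\operatorname{Re}(\overline{\lambda_j}\lambda_j X^a_{jj}) = |\lambda_j|^2\operatorname{Re}(X^a_{jj}) = 0$. Hence $\sum_j|\lambda_j|^2$ has zero martingale part and its ``$\tfrac{d}{dt}$'' is literally its It\^o drift, which makes the claimed pathwise inequality meaningful. For the same reason ($X^a_{jj}$ imaginary, $K^a$ real) the $K^a$ contribution to $\operatorname{Re}(\overline{\lambda_j}\cdot(\text{drift of }\lambda_j))$ vanishes, so the action never appears.

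\textbf{Assembling the drift.} The quadratic variation of the $dw^a$-part contributes $2N|\lambda_j|^2\sum_a|X^a_{jj}|^2$ to the drift of $|\lambda_j|^2$, while the explicit constant $\tfrac{2(n^2-1)}{n}$ in \eqref{eq:dlambda} contributes $-\tfrac{4N(n^2-1)}{n}|\lambda_j|^2$; by the Casimir identity these combine to $-2N|\lambda_j|^2\sum_{k\neq j}c_{jk}$ with $c_{jk}:=\sum_{a=1}^m|(X^a)_{jk}|^2 = c_{kj}\ge 0$. Using $(X^a)_{kj}(X^a)_{jk}=-|(X^a)_{jk}|^2$ and the diagonal form of $\Omega_j$ one finds $\sum_a e_j^TX^a\Omega_jX^ae_j = -\sum_{k\neq j}c_{jk}\frac{\lambda_k}{\lambda_k-\lambda_j}$, contributing $4N|\lambda_j|^2\sum_{k\neq j}c_{jk}\operatorname{Re}\frac{\lambda_k}{\lambda_k-\lambda_j}$. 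Adding these, the drift of $|\lambda_j|^2$ equals $2N|\lambda_j|^2\sum_{k\neq j}c_{jk}\big(2\operatorname{Re}\tfrac{\lambda_k}{\lambda_k-\lambda_j}-1\big)$, and a one-line computation gives the identity $2\operatorname{Re}\tfrac{\lambda_k}{\lambda_k-\lambda_j}-1 = \tfrac{|\lambda_k|^2-|\lambda_j|^2}{|\lambda_k-\lambda_j|^2}$.

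\textbf{Symmetrizing.} Summing over $j$ and symmetrizing over the unordered pair $\{j,k\}$ (legitimate since $c_{jk}$ and $|\lambda_j-\lambda_k|^2$ are symmetric), the numerator $|\lambda_j|^2(|\lambda_k|^2-|\lambda_j|^2)+|\lambda_k|^2(|\lambda_j|^2-|\lambda_k|^2) = -(|\lambda_j|^2-|\lambda_k|^2)^2$ appears, so
\[
\frac{d}{dt}\sum_{j=1}^n|\lambda_j|^2 = -2N\sum_{1\le j<k\le n}\frac{c_{jk}\,(|\lambda_j|^2-|\lambda_k|^2)^2}{|\lambda_j-\lambda_k|^2}\le 0 .
\]
The routine parts are the two algebraic identities (the $\Omega_j$ trace and the $\operatorname{Re}\frac{\lambda_k}{\lambda_k-\lambda_j}$ simplification) and the It\^o expansion. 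The only step needing genuine care is matching the normalization of the Brownian increments in \eqref{eq:dlambda} against the Casimir constant: it is precisely the cancellation between the It\^o correction $2N|\lambda_j|^2\sum_a|X^a_{jj}|^2$ and the constant $\tfrac{2(n^2-1)}{n}$ that leaves only the $c_{jk}$ over the off-diagonal pairs, after which everything telescopes into a sum of squares.
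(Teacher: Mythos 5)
Your proof is correct and follows essentially the same route as the paper's: It\^o's formula applied to $|\lambda_j|^2$, vanishing of the noise and $K^a$ contributions because the diagonal entries $X^a_{jj}$ are purely imaginary, the Casimir identity to trade the constant $\tfrac{2(n^2-1)}{n}$ for the off-diagonal sums $\sum_{k\ne j}|X^a_{jk}|^2$, the identity $\operatorname{Re}\tfrac{\lambda_k+\lambda_j}{\lambda_k-\lambda_j}=\tfrac{|\lambda_k|^2-|\lambda_j|^2}{|\lambda_k-\lambda_j|^2}$, and symmetrization over pairs to obtain a non-positive sum of squares. The only cosmetic difference is that the paper absorbs the Casimir constant into the matrix $D_j$ with entries $\tfrac{\lambda_k+\lambda_j}{\lambda_k-\lambda_j}$ before applying It\^o's formula, whereas you combine the terms afterward; the computations coincide.
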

\begin{proof}
By replacing the term $2(n^2-1)/n$ in \eqref{eq:dlambda} using \eqref{eq:Casimir}, we can rewrite \eqref{eq:dlambda} as
\begin{equation*}
d\lambda_j = N\left[ -\frac{1}{\sqrt{N}} \sum_{a=1}^m \lambda_j X^a_{jj} \,dw^a - \sum_{a=1}^m \left(K^a X_{jj}^a + e_j^T X^a D_j X^a e_j - \sum_{a=1}^m (X_{jj}^a)^2 \right) \lambda_j \,dt\right],
\end{equation*}
where $D_j = \mathrm{diag} \left\{ \frac{\lambda_1+\lambda_j}{\lambda_1 - \lambda_j}, \dots, \frac{\lambda_{j-1}+\lambda_j}{\lambda_{j-1} - \lambda_j}, 0, \frac{\lambda_{j+1}+\lambda_j}{\lambda_{j+1} - \lambda_j}, \dots, \frac{\lambda_N+\lambda_j}{\lambda_N - \lambda_j} \right\}$. Since $X^a$ is skew-Hermitian, it follows that
\begin{equation*}
d\bar{\lambda}_j = N\left[ \frac{1}{\sqrt{N}} \sum_{a=1}^m \bar{\lambda}_j X^a_{jj} \,dw^a - \sum_{a=1}^m \left( -K^a X_{jj}^a + e_j^T X^a \bar{D}_j X^a e_j - \sum_{a=1}^m (X_{jj}^a)^2 \right) \bar{\lambda}_j \,dt\right].
\end{equation*}
Therefore by It\^o calculus,
\begin{equation*}
d(|\lambda_j|^2) = \bar{\lambda}_j \,d\lambda_j + \lambda_j \,d\bar{\lambda}_j - 2N \sum_{a=1}^m |\lambda_j|^2 (X_{jj}^a)^2 \,dt = -2N \sum_{a=1}^m e_j^T X^a (\mathrm{Re}\, D_j) X^a e_j |\lambda_j|^2  \,dt.
\end{equation*}
Summing up the above equation for all $j = 1,\dots,n$, we obtain
\begin{equation*}
\begin{split}
    \frac{d}{dt} \sum_{j=1}^n |\lambda_j|^2 &
    =  -2N \sum_{j=1}^n  \sum_{a=1}^m e_j^T X^a (\operatorname{Re} D_j) X^a e_j |\lambda_j|^2
    = 2N \sum_{j=1}^n\sum_{k\ne j}^n \sum_{a=1}^m  |\lambda_j|^2 |X_{jk}|^2 
        \operatorname{Re}\frac{\lambda_{k}+\lambda_{j}}{\lambda_{k}-\lambda_{j}} \\
    & = 2N \sum_{j=1}^n\sum_{k\ne j}^n \sum_{a=1}^m  |\lambda_k|^2  |X_{kj}|^2 
        \operatorname{Re} \frac{\lambda_{j}+\lambda_{k}}{\lambda_{j}-\lambda_{k}}
    = N \sum_{j=1}^n\sum_{k\ne j}^n \sum_{a=1}^m  |X_{jk}|^2  (|\lambda_j|^2-|\lambda_k|^2) 
        \operatorname{Re} \frac{\lambda_{k}+\lambda_{j}}{\lambda_{k}-\lambda_{j}} \\
    & = N \sum_{j=1}^n\sum_{k\ne j}^n \sum_{a=1}^m  |X_{jk}|^2  (|\lambda_j|^2-|\lambda_k|^2) 
        \frac{|\lambda_{k}|^2-|\lambda_{j}|^2}{|\lambda_{k}-\lambda_{j}|^2} \leq 0,
\end{split}
\end{equation*}
which completes the proof.
\end{proof}

This theorem indicates that gauge cooling has introduced additional velocity which helps confine the probability density function. It can then be expected that when $J^a$ is small, the probability density function can be localized. A special case for the $SU(2)$ theory with $N=1$ and $S(U) = -(A+iB) \operatorname{tr} U$ for $A,B\in \mathbb{R}$ has been considered {in a number of references including \cite{berges2008real, aarts2013stablity, cai2020does}}. When $n=2$, the manifold $M$ defined in \eqref{eq:manifold} is identical to $\mathbb{C} \backslash \{0\}$, which turns out to be similar to the complexified $U(1)$ one-link theory. Mimicking the transformations in \cref{sec:U1}, we can write down the CL dynamics as
\begin{equation} \label{eq:SU2_sde}
  \begin{aligned}
    dx &= K\,dt + dw, & K &= 2\left(-A \cosh y \sin x+B \sinh y \cos x+\frac{\sin 2 x}{\cosh 2 y-\cos 2 x}\right),\\
    dy &= J\,dt, & J &= -2\left(A \sinh y \cos x+B \cosh y \sin x+\frac{\sinh 2 y}{\cosh 2 y-\cos 2 x}\right),
  \end{aligned}
\end{equation}
where $x$ is considered to be periodic with period $2\pi$. Let $z = x + iy$, then the expectation value of interest is $O(z) = e^{iz}+e^{-iz}$. In Theorem 4.1 of \cite{cai2020does}, it was shown that when $(A, B)$ locates in a certain region, the probability function is localized and the CL method produces the correct result. For example in their numerical experiments with $A=1$, when $B = 0.2$, $(A,B)$ is inside the region, and the CL result gives correct results for all observables. However, when $A, B$ are chosen such that the support of probability density function is not compact, the reference \cite{cai2020does} also shows that the CL method may produce a result that is close to the exact integral, but it is unclear whether the error comes from the bias or the stochastic noise. More precisely, three cases $(A,B) = (1,2), (5,1)$ and $(5,10)$ are tested in \cite{cai2020does}, among which only the case $(A,B) = (1,2)$ shows a clear bias, while no conclusion is drawn for the other two sets of parameters.

By our analysis of the model problem in \cref{sec:toy_model}, it can be expected that for $A = 5$, the results may also be biased due to the global support of the probability density function. To confirm this, we carry out the analysis of the decay rate in the similar way to \cref{sec:decay}. Assume the steady-state probability function generated by the stochastic differential equation \eqref{eq:SU2_sde} is of the form
\begin{equation*}
  P(x,y) \approx c(x) e^{-\beta y},
\end{equation*}
at large $y > 0$. Substituting this ansatz into the associated FP equation and considering only the leading-order terms, we obtain
\begin{equation*}
    L^T P \approx e^{y-\beta  y} \left(c'(x) (B \cos (x)-A \sin (x))+(\beta -2) c(x) (A \cos (x)+B \sin (x))\right).
\end{equation*}
Equating this expression to zero, we can solve $c(x)$ as
\begin{equation*}
    c(x) = C (B \cos (x)-A \sin (x))^{\beta -2}, \quad C \in \mathbb{R}.
\end{equation*}
Due to the fact that $P(x,y)$ must be positive, the parameter $\beta$ can only take the value $2$. To check whether the solution is biased, we follow the condition \eqref{eq:limits2} to check the growth rate of $J$ and $O$. When $y$ is large, $J(x,y) \propto e^y$, $O(x+iy) \propto e^y$, whose product exactly cancels the decay of $P(x,y)$, meaning that the CL method fails to produce unbiased result when $P(x,y)$ is not localized.

The behavior of the one-dimensional $SU(2)$ theory turns out to be very similar to the model problem studied in \cref{sec:toy_model}: when the parameter exceeds a certain threshold, the distribution of samples is no longer localized, and then biased result is produced. To verify this, we again use the numerical method introduced in \cref{sec:toy1-nm} to solve the FP equation, when $A=1$, the marginal probability density function $P_y(y)$ is plotted in \cref{fig:SU2PyB}, one can clearly observe that when $B$ increases, the support of the probability density function turns global at a certain point, and the decay rate agrees with our theoretical analysis. Based on the computed probability density function, we compute the observables and show the results in \cref{fig:su2vsA1}, from which one can observe that the CL results deviate from the exact values smoothly. We have also done the same numerical tests for $A = 5$, for which the support of the probability density function is the whole domain for any $B > 0$. The results shown in \cref{fig:su2vsA5} confirm the existence of the bias for all $A = 5$ and $B > 0$.

\begin{figure}[htbp]
\centering
\includegraphics[width=0.35\textwidth]{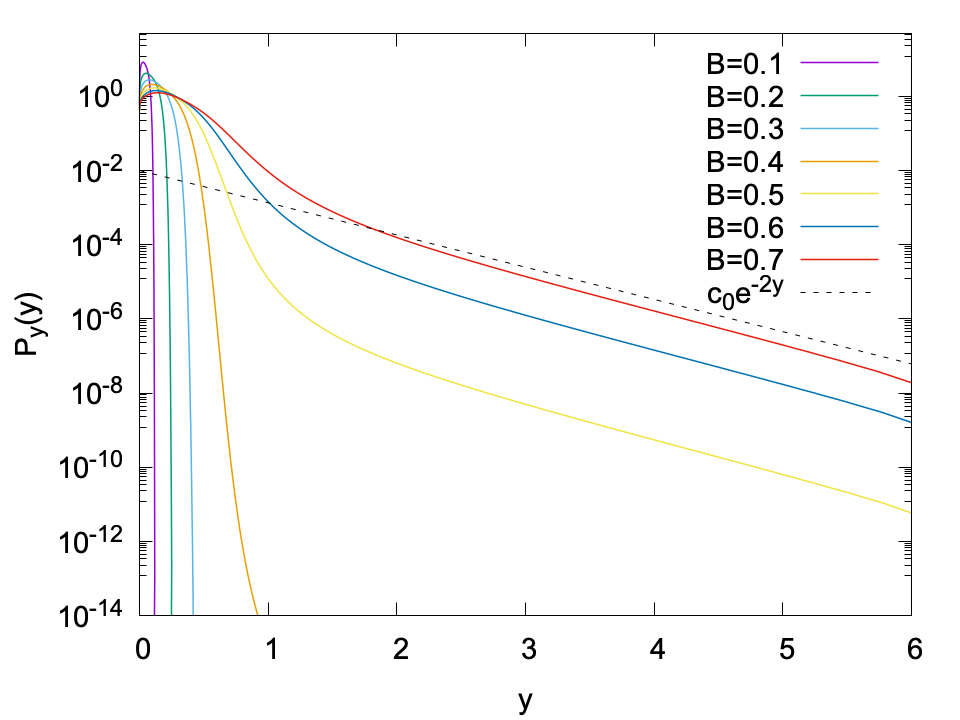}\qquad
\caption{The decay of $P_y$ for $A=1$. \label{fig:SU2PyB}}
\end{figure}

\begin{figure}[htbp]
  \centering
  \subfigure[$A=1$]
  {\includegraphics[width=0.35\textwidth]{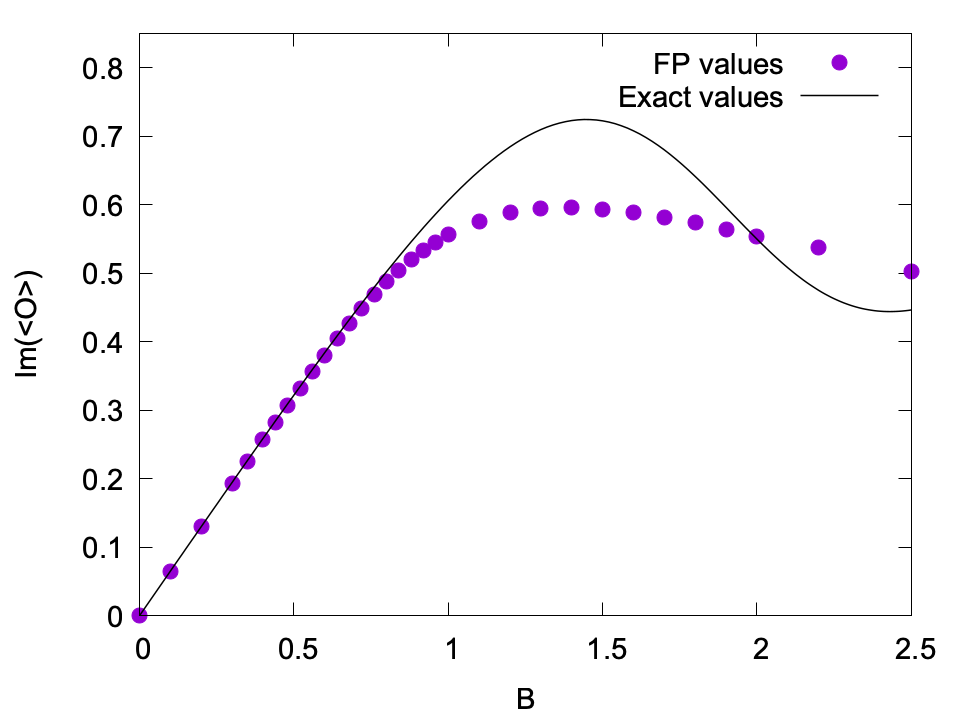}\label{fig:su2vsA1}}\qquad
  \subfigure[$A=5$]
  {\includegraphics[width=0.35\textwidth]{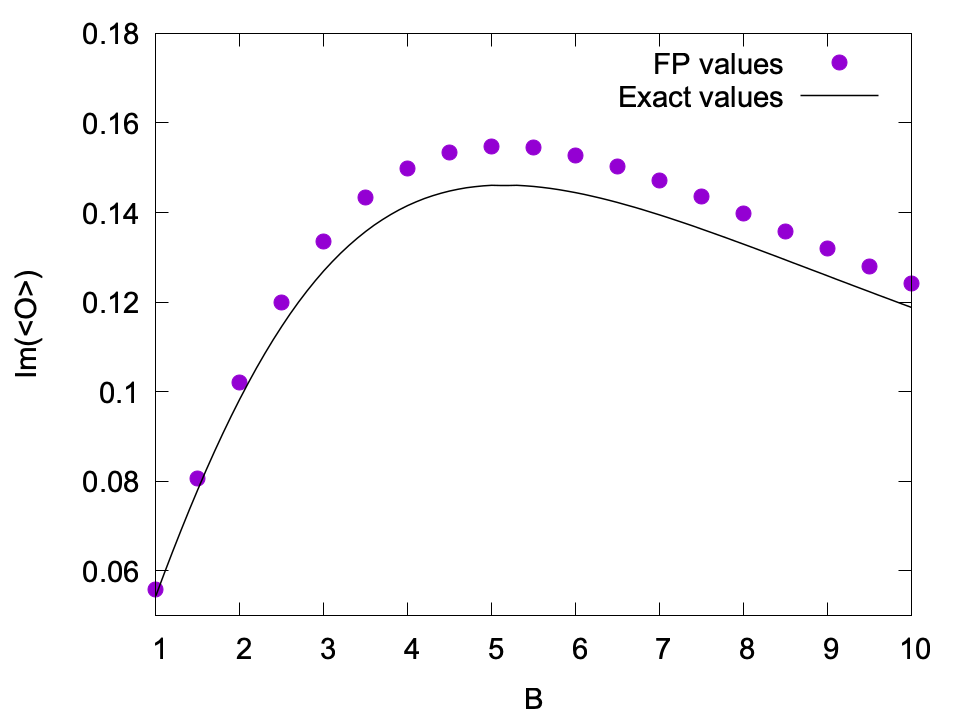}\label{fig:su2vsA5}}
  \caption{The imaginary part of the observables for $A=1$ and $A=5$. \label{fig:su2vs} }
\end{figure}

In conclusion, for the one-dimensional $SU(n)$ theory, gauge cooling can localize the probability density function for certain parameters, which stabilizes the CL method in some cases. However, when the parameters are set so that the velocity pushing samples away from the unitary field is large, the non-vanishing boundary term may still create bias in the numerical result.

\begin{remark}
In the multi-dimensional case, very similar phenomenon can be observed. In \cite{scherzer2020controlling}, the boundary term of the heavy-dense QCD is studied, which includes two parameters $\beta$ and $\mu$, denoting the gauge coupling and the chemical potential, respectively. These parameters have the similar roles to $A$ and $B$ in the above example. By numerical experiments, it is demonstrated in \cite{scherzer2020controlling} that when $\beta$ is large, although the tail of the probability density shrinks, the bias persists. The results in \cite{aarts2016qcd} for the same example show that smaller $\mu$ leads to smaller values of $F(\{U\})$ and more compact distributions of the samples, which also agrees with our observation in \cref{fig:SU2PyB}. According to the theoretical study in the one-dimensional case, we also expect a range of $\beta$ and $\mu$ where the CL dynamics with gauge cooling generates correct results, which need to be further explored in future works.
\end{remark}

\section{Conclusion and future works} \label{sec:conclusion}
This paper is devoted to the underlying mechanism of the CL method and the gauge cooling technique. By studying a controversial one-dimensional example, we have made a conclusion on the validity of the CL method with respect to its parameters. Meanwhile, it is demonstrated by this example that the use of the CL method needs to be extremely careful even for simple cases. {As pointed out in \cite{aarts2010complex,scherzer2019complex},} the decay rate of the probability density function must be carefully monitored in the numerical simulation. {A numerical approach to monitoring this decay has been proposed in \cite{scherzer2020controlling}.} The only situation in which the method can provide unbiased result for any observables is that the probability density function is localized. This occurs when all the velocities on the boundary of a certain bounded domain point inward, which may appear when the parameter controlling the magnitude of the imaginary part of the action is small. When the parameter exceeds a certain threshold, the bias in the result may arise smoothly, making it difficult to identify the appearance of the numerical failure.

For the lattice field theory, the situation is even worse since the localized probability density function does not exist. This reveals the importance of the gauge cooling technique, which introduces additional velocity that points towards the unitary field. When gauge cooling is applied, the probability density function may again be localized for certain parameters, so that the method is applicable for any observables. However, some limitations of gauge cooling, including its inability for Abelian groups and its failure in essentially suppressing the tails, is also uncovered by theoretical analysis.

This work also provides possible ideas to further develop the complex Langevin method, especially on the improvement of the dynamical stabilization proposed in \cite{attanasio2019dynamical}. This method regularizes the CL method by artificially introducing an velocity that pulls the samples back to the unitary field. This work may shed some light on the selection of the additional velocity, which should either create a velocity field that localizes the probability density function, or essentially suppresses the tail. This will be considered in our future works.
\appendix

\section{Proof of \cref{thm:CL}} \label{sec:proof}
\begin{proof}
By condition \ref{itm:H1}, we have
\begin{equation*}
\frac{\partial}{\partial t} \left( \frac{\partial \mathcal{O}}{\partial y} - i \frac{\partial \mathcal{O}}{\partial x} \right)
= \frac{\partial^2}{\partial x^2} \left( \frac{\partial \mathcal{O}}{\partial y} - i \frac{\partial \mathcal{O}}{\partial x} \right) + K_x \frac{\partial}{\partial x} \left( \frac{\partial \mathcal{O}}{\partial y} - i \frac{\partial \mathcal{O}}{\partial x} \right) + K_y \frac{\partial}{\partial y} \left( \frac{\partial \mathcal{O}}{\partial y} - i \frac{\partial \mathcal{O}}{\partial x} \right).
\end{equation*}
By the uniqueness of the solution of the advection-diffusion equation\cite{evans1999partial}, we conclude that $\partial_y \mathcal{O} = i\partial_x \mathcal{O}$ for any $x,y$ and $t$ since the initial value $\mathcal{O}(x,y;0) = O(x+iy)$ satisfies the Cauchy-Riemann equations.

For $\tau \in [0,t]$, define
\begin{equation} \label{eq:F}
F(t,\tau) = \int_{\mathbb{R}}
  \rho(x;t-\tau) \mathcal{O}(x,0;\tau) \,dx.
\end{equation}
We would like to show that $F(t,\tau)$ is independent of $\tau$, which can be done by calculating the partial derivative:
\begin{equation*}
\begin{split}
\frac{\partial}{\partial \tau} F(t,\tau) &=
  \int_{\mathbb{R}} \left[\rho(x;t-\tau) \frac{\partial}{\partial t} \mathcal{O}(x,0;\tau) - \mathcal{O}(x,0;\tau) \frac{\partial}{\partial t} \rho(x;t-\tau) \right] dx \\
&= \int_{\mathbb{R}} \rho(x;t-\tau) \left(
  \frac{\partial^2}{\partial x^2} \mathcal{O}(x,0;\tau) +
  K_x(x,0) \frac{\partial}{\partial x} \mathcal{O}(x,0;\tau) +
  K_y(x,0) \frac{\partial}{\partial y} \mathcal{O}(x,0;\tau)
  \right) dx \\
& \quad - \int_{\mathbb{R}} \mathcal{O}(x,0;\tau) \left(
  \frac{\partial^2}{\partial x^2} \rho(x;t-\tau) +
  \frac{\partial}{\partial x} [S'(x) \rho(x;t-\tau)]
  \right) dx.
\end{split}
\end{equation*}
The holomorphism of $\mathcal{O}$ indicates $\partial_y \mathcal{O} = i \partial_x \mathcal{O}$, inserting which into the above equation yields
\begin{equation*}
\begin{split}
\frac{\partial}{\partial \tau} F(t,\tau)
&= \int_{\mathbb{R}} \rho(x;t-\tau) \left(
  \frac{\partial^2}{\partial x^2} \mathcal{O}(x,0;\tau) +
  S'(x) \frac{\partial}{\partial x} \mathcal{O}(x,0;\tau)
  \right) dx \\
& \quad - \int_{\mathbb{R}} \mathcal{O}(x,0;\tau) \left(
  \frac{\partial^2}{\partial x^2} \rho(x;t-\tau) +
  \frac{\partial}{\partial x} [S'(x) \rho(x;t-\tau)]
  \right) dx.
\end{split}
\end{equation*}
According to \ref{itm:H3}, we can apply integration by parts to the above result and conclude that $\partial_{\tau} F(t,\tau) = 0$. Therefore $F(t,0) = F(t,t)$, meaning that
\begin{displaymath}
\int_{\mathbb{R}} \rho(x;t) \mathcal{O}(x,0;0) \,dx =
  \int_{\mathbb{R}} \rho(x;0) \mathcal{O}(x,0;t) \,dx.
\end{displaymath}
Applying the initial conditions of $\rho$ and $\mathcal{O}$, we get
\begin{displaymath}
\int_{\mathbb{R}} \rho(x;t) O(x) \,dx =
  \int_{\mathbb{R}} p(x) \mathcal{O}(x,0;t) \,dx.
\end{displaymath}
By comparing this equation with \eqref{eq:obs_eq} and \eqref{eq:OP}, we see that it remains only to show
\begin{equation} \label{eq:pO}
\int_{\mathbb{R}} p(x) \mathcal{O}(x,0;t) \,dx =
  \int_{\mathbb{R}} \int_{\mathbb{R}} \mathcal{O}(x,y;\tau) P(x,y;t-\tau) \,dx \,dy
\end{equation}
for some $\tau$. This can be done by setting $\tau = t$, so that the right-hand side of the above equation becomes
\begin{equation*}
\int_{\mathbb{R}} \int_{\mathbb{R}} \mathcal{O}(x,y;t) P(x,y;0) \,dx \,dy = \int_{\mathbb{R}} \int_{\mathbb{R}} \mathcal{O}(x,y;t) p(x) \delta(y) \,dx \,dy = \int_{\mathbb{R}} \mathcal{O}(x,0;t) p(x) \,dx,
\end{equation*}
which is clearly identical to the left-hand side of \eqref{eq:pO}.
\end{proof}

\section{Proof of \cref{prop:real_imag}} \label{sec:proof2}
\begin{proof}
Given \eqref{eq:VW_tau}, we can compute $\frac{dU_{\tau}}{d\tau}$ by
\begin{equation} \label{eq:dU_dtau}
\begin{split}
\frac{dU_{\tau}}{d\tau} &= (dL_{V_\tau})_{W_{\tau}} \left( \frac{dW_{\tau}}{d\tau} \right) + (dR_{W_\tau})_{V_{\tau}} \left( \frac{dV_{\tau}}{d\tau} \right) \\
& = (dL_{V_\tau})_{W_{\tau}} \Big( (dR_{W_\tau})_e (Y_\tau) \Big) + (dR_{W_\tau})_{V_{\tau}} \Big( (dR_{V_\tau})_e (X_\tau) \Big) \\
& = (dR_{W_\tau})_{V_{\tau}} \Big( (dL_{V_\tau})_e (Y_\tau) \Big) + (dR_{U_\tau})_e (X_\tau)
\end{split}
\end{equation}
Using $L_{V_{\tau}} = R_{V_{\tau}} \circ \Psi_{V_{\tau}}$, one sees that $(dL_{V_\tau})_e (Y_\tau) = (dR_{V_{\tau}})_e \big( (d\Psi_{V_{\tau}})_e (Y_{\tau}) \big)$, which can be inserted into \eqref{eq:dU_dtau} and yield \eqref{eq:U_tau} by $(dR_{W_\tau})_{V_{\tau}} \circ (dR_{V_{\tau}})_e = (dR_{U_\tau})_e$. Conversely, if \eqref{eq:U_tau} is given, then \eqref{eq:VW_tau} holds due to the uniqueness of the Cartan decomposition.

To show \eqref{eq:normal}, we write $n_\tau \in T_{W_r} \exp(i\mathfrak{g})$ as $n_{\tau} = (dR_{W_\tau})_e (n_{\tau}^e)$ for some $n_{\tau}^e \in i\mathfrak{g}$. Then by the right translational invariance of the inner product, it can be shown that
\begin{align}
\left\langle \frac{d W_{\tau}}{d\tau}, n_{\tau} \right\rangle_{U_{\tau}} &= \langle Y_{\tau}, n_{\tau}^e \rangle_e, \label{eq:ip1} \\
\begin{split}
\left\langle \frac{d U_{\tau}}{d\tau}, (dL_{V_{\tau}})_{W_{\tau}}(n_{\tau}) \right\rangle_{U_{\tau}} &= \langle X_{\tau} + (d\Psi_{V_\tau})_e (Y_{\tau}), (d\Psi_{V_\tau})_e (n_{\tau}^e) \rangle_e \\
&= \langle (d\Psi_{V_\tau})_e (Y_{\tau}), (d\Psi_{V_\tau})_e (n_{\tau}^e) \rangle_e
= \langle (dL_{V_\tau})_e (Y_{\tau}), (dL_{V_\tau})_e (n_{\tau}^e) \rangle_e
\end{split} \label{eq:ip2}
\end{align}
where the second equality of \eqref{eq:ip2} uses the fact that $(d\Psi_{V_\tau})_e$ maps $i\mathfrak{g}$ to $i\mathfrak{g}$. Note that the metric is bi-invariant on $G$, meaning that the left translation $(dL_{V_{\tau}})_e$ does not change the value of the inner product due to $V_{\tau} \in G$. Therefore \eqref{eq:ip1} and \eqref{eq:ip2} are equal, as completes the proof.
\end{proof}

\bibliographystyle{siamplain}
\bibliography{CLStrip.bib}

\end{document}